\documentclass[12pt,reqno]{amsart}
\usepackage{a4wide}
\numberwithin{equation}{section}

\usepackage{amsmath}
\usepackage{color}
\usepackage{txfonts}
\usepackage{amsfonts}
\usepackage{amsmath,amsthm,amssymb,amscd}
\usepackage{latexsym}
\usepackage{hyperref}
\usepackage[numbers,sort&compress]{natbib}
\usepackage{hypernat}
\allowdisplaybreaks

\bibliographystyle{plain}

\numberwithin{equation}{section}

\newtheorem{theorem}{Theorem}[section]
\newtheorem{proposition}[theorem]{Proposition}

\newtheorem{lemma}[theorem]{Lemma}

\theoremstyle{definition}

\newtheorem{remark}[theorem]{Remark}
\newtheorem{remarks}[theorem]{Remarks}

\newcommand{\va}{\varepsilon}

\newcommand{\ga}{\gamma}

\newcommand{\la}{\lambda}

\newcommand{\dx}{\,\mathrm{d}x}
\newcommand{\dy}{\,\mathrm{d}y}
\newcommand{\dsi}{\,\mathrm{d}\sigma_x}
\newcommand{\dt}{\,\mathrm{d}t}

\def\r{\mathbb{R}}

\begin{document}

\title[Normalized Solutions for nonlinear Schr\"{o}dinger-Poisson equations]
{Normalized Solutions for nonlinear Schr\"{o}dinger-Poisson equations involving nearly mass-critical exponents}

\author{Qidong Guo \textsuperscript{1}, Rui He \textsuperscript{1}, Qiaoqiao Hua \textsuperscript{1,$\ast$}, Qingfang Wang \textsuperscript{2} 
}

\address{ \textsuperscript{1} School  of Mathematics and Statistics, Central  China Normal University, Wuhan 430079, P. R. China }
\address{ \textsuperscript{2} School of Mathematics and Computer Science, Wuhan Polytechnic University, Wuhan 430023, P. R. China}

\email{qdguo@mails.ccnu.edu.cn}

\email{hry@mails.ccnu.edu.cn}

\email{hqq@mails.ccnu.edu.cn}

\email{hbwangqingfang@163.com}

\thanks{$^\ast$ Corresponding author: Qiaoqiao Hua}

\begin{abstract}
We study the Schr\"{o}dinger-Poisson-Slater equation
\begin{equation*}\left\{\begin{array}{lll}
		-\Delta u + \la u +  \big(|x|^{-1} \ast |u|^{2}\big)u = V(x) u^{ p_\va-1 } , \,  \text{ in }  \r^{3},\\ [2mm]
		\int_{\r^3}u^2 \dx= a,\,\, u > 0, u \in H^{1}(\mathbb{R}^{3}),
	\end{array}
	\right.
\end{equation*}
where $\lambda$ is a Lagrange multiplier, $V(x)$ is a real-valued potential, $a\in \r_{+}$ is a constant, $ p_{\va} = \frac{10}{3} \pm \va$ and $\va>0$ is a small parameter.
In this paper, we prove that it is the positive critical value of the potential $V$
 that affects the existence of single-peak solutions for this problem.
Furthermore, we prove the local uniqueness of the solutions we construct.

 {\bf Key words:}  Schr\"{o}dinger-Poisson-Slater equations; nearly mass-critical growth; normalized solutions; local uniqueness.

{\bf 2020 Mathematics Subject Classification:} 35A01, 35A02, 35J60.
\end{abstract}

\maketitle

\section{Introduction}

We consider the following Schrödinger-Poisson-Slater equation
\begin{equation}\label{lambda-12}\left\{\begin{array}{lll}
		-\Delta u + \lambda  u - \gamma \big(|x|^{-1} \ast |u|^{2}\big)u= V(x) |u|^{p-2 } u ,    \text{ in }  \r^{3},\\[2mm]		
		\int_{\r^3}u^2 \dx= a, \,\,u \in H^{1}(\mathbb{R}^{3}),
	\end{array}
	\right.
\end{equation}
where $\lambda$ is the Lagrange multiplier, $\gamma\in \r$, $p\in(2,6]$, $V(x)$ is a real-valued potential and $a>0$ is some constant. 
The minimization problem corresponding to \eqref{lambda-12} is given by
\begin{equation}\label{Schrodinger-2}
	m(a)=\inf_{u \in H^{1}(\mathbb{R}^{3})}\,\Bigl\{ F(u):\, \|u\|_{L^2(\r^3)}^2=a\Bigr\},
\end{equation}
where
\begin{equation}\label{F}
	\begin{aligned}
		F(u)={}&\frac{1}{2}\int_{\r^{3}} |\nabla u|^{2}\dx
		-\frac{\gamma}{2}\int_{\r^3}\Big(\int_{\r^3}\frac{u^2(y)}{|x-y|}\dy\Big) u^2(x)\dx
		-\frac{1}{p}\int_{\r^{3}}V(x)|u|^{p}\dx. \\
	\end{aligned}
\end{equation}

We first look at the case that $V(x)$ is a constant function. We assume $V(x)\equiv c$. The case where $\gamma<0$ and $c > 0$ in \eqref{lambda-12} has been the most studied so far. For $p \in (2,\frac{10}{3})$, the functional $F (u)$ is bounded from below
and coercive on $\Big\{u: \|u\|_{L^2(\r^3)}^2=a\Big\}$. It also holds when $p=\frac{10}{3}$ and $a > 0$ is small enough. 
Due to the non-local term, it is not easy to find a minimizer of the energy functional \eqref{F} even if $m(a)>-\infty$.
Using techniques introduced in \cite{Catto1992}, it was proved in \cite{Sanchez2004} that minimizers exist
for $p = \frac{8}{3}$ if $a > 0$ is small enough. 
Later, J. Bellazzini and G. Siciliano  in \cite{Bellazzini2011} showed the existence of  minimizers for any $p\in (2, 3)$ provided that $a > 0$ is sufficiently small, while the case $p \in (3,\frac{10}{3})$ was proved in \cite{Bellazzini2011-1} if $a>0$ is sufficiently large, see also \cite{Jeanjean2013} where the existence of a threshold value of mass $a_0 > 0$ is given. It was also presented in \cite{Jeanjean2013} that a minimizer does not exist for any $a > 0$ if $p= 3$ or $p=\frac{10}{3}$.  For $p \in (\frac{10}{3}, 6)$, a scaling argument 
reveals that $m(a) = -\infty$. However, by a mountain-pass argument, it was proved in \cite{Bellazzini2013} that there exists a critical point of $F(u)$ constrained to $\Big\{u: \|u\|_{L^2(\r^3)}^2=a\Big\}$
at a strictly positive level for $a > 0$ small enough. 
Recently, L. Jeanjean and T.~T. Le \cite{Jeanjean2021} studied \eqref{lambda-12} with $p\in(\frac{10}{3},6]$. For $\gamma>0$ and $c>0$, they studied the multiplicity of normalized solutions to \eqref{lambda-12}. Specifically, they proved that, both in the Sobolev subcritical case $p\in(\frac{10}{3},6)$ and in the Sobolev critical case $p=6$,  the problem \eqref{lambda-12} admits two solutions $u_a^+$ and $u_a^-$ if $a$ is small.
For $\gamma>0$ and $c<0$, the problem \eqref{lambda-12} admits a solution which is a global minimizer.
For $\gamma<0$ and $c>0$ and $p=6$, the problem \eqref{lambda-12} does not admit positive solutions, which complements the result of  \cite{Bellazzini2013}.  The above research results are all based on the hypothesis of trivial potentials.  Our goal is to study the relation between the existence of solutions and the nontrivial potentials.

We are also concerned about the concentration phenomenon of solutions to \eqref{lambda-12}. There are many works on blow-up results for solutions of Schr\"{o}dinger-Poisson problems without $L^2$-constraints. Most of them study the singularly perturbed problem with the small
parameter $\varepsilon>0$
\begin{equation}\label{lambda-14}\left\{\begin{array}{lll}
		-\varepsilon^2\Delta u +  V(x)u + \Phi(x) u= |u|^{p-2 } u ,   & \text{ in }  \r^{3},\\[2mm]		
		-\Delta\Phi = u^2, & \text{ in }  \r^{3}.\\
	\end{array}
	\right.
\end{equation}
In \cite{DAprile2006}, T. D'Aprile and J. Wei studied the above problem in the unit ball $B_1$ of $\r^3$ with Dirichlet boundary conditions, and they proved the existence of positive radial solutions $(u_\va, \Phi_\va)$ such that $u_\va$ concentrates at a distance $(\va/2)|\log \va|$ away from the boundary $\partial B_1$ as $\va$ tends to zero. A. Ambrosetti \cite{Ambrosetti2008} showed the existence of spike-like solutions of \eqref{lambda-14}. D. Ruiz and G. Vaira \cite{Ruiz2011} proved the existence of cluster solutions of  \eqref{lambda-14}, whose bumps concentrate around a local strict minimum of the potential $V$.  Furthermore, I. Ianni and G. Vaira \cite{Ianni2008} studied the existence of semiclassical states for a nonlinear Schr\"odinger-Poisson system that concentrates near critical points of the external potential $V(x)$. Under the suitable conditions for $V(x)$,  the semiclassical states of \eqref{lambda-14} concentrating on spheres were obtained in \cite{DAprile2005,Ruiz2005}.
The reduction method was also used for the following problem with the small
parameter $\varepsilon>0$
\begin{equation}\label{lambda-13}\left\{\begin{array}{lll}
		-\varepsilon^2\Delta u +  V(x)u + K(x) \Phi(x) u=  |u|^{p-2 } u ,   & \text{ in }  \r^{3},\\[2mm]		
		-\varepsilon\Delta\Phi = K(x)u^2, & \text{ in }  \r^{3},\\
	\end{array}
	\right.
\end{equation}
which is quite different from \eqref{lambda-14}. Here the potential $K(x)$ plays a role when the critical point of $V(x)$ is degenerate (see \cite{Ambrosetti2008} for example). In \cite{Ianni2009}, the authors found necessary conditions for solutions concentrating on a sphere with $V(x)$ and $K(x)$ being radial, and I. Ianni  \cite{Ianni2009-2} proved the existence of such solutions as $\va\to0$.

As far as we know, there is no results on normalized and concentrated solutions to Schr\"{o}dinger-Poisson systems with nontrivial potentials. 
We are concerned with the situation where the exponent $p$ approaches to $\frac{10}{3}$ for a given mass. Since the energy functional $F(u)$ with $V(x)\equiv c$ restricted to the norm constraint has no critical points for $\gamma, c<0$ (see \cite[Theorem 1.1]{Jeanjean2021}), we can at least consider the case $\gamma\in\r$ and $V(x)>0$. Specially, we study the following equation
\begin{equation}\label{Schrodinger}\left\{\begin{array}{lll}
		-\Delta u + \la  u +  \big(|x|^{-1} \ast |u|^{2}\big)u = V(x) u^{ p-1 } , \,   \text{ in } \r^{3},\\[2mm]		
		\int_{\r^3}u^2 \dx = a,\,\, u > 0, u \in H^{1}(\mathbb{R}^{3}),
	\end{array}
	\right.
\end{equation}
where $\lambda$ is the unknown Lagrange multiplier, $V(x)$ is a real-valued potential which has at least one positive critical value, $a\in \r_{+}$ is a constant, and $ p$ is close to $\frac{10}{3}$. 

Motivated by the above rich literature, we expect that the normalized solution of \eqref{Schrodinger} has a concentration phenomenon for $p\nearrow \frac{10}{3}$ and $a>0$ large enough, or for $p\searrow \frac{10}{3}$ and $a>0$ small enough. In fact, since Theorem \ref{lemma1} proved later indicates that the limit equation of \eqref{Schrodinger} is that $-\Delta u + u = u^{p-1}, \, u \in H^1(\r^3)$, we can firstly consider the case where the Poisson term vanishes. Then \eqref{Schrodinger} reduces to the Schr\"odinger equation
\begin{equation}\label{Sch1}
	\left\{\begin{array}{lll}
		-\Delta u + \la  u = V(x) u^{ p-1 } , \,  \text{ in } \r^{3},\\[2mm]		
		\int_{\r^3}u^2 \dx = a,\,\, u > 0, u \in H^{1}(\mathbb{R}^{3}).
	\end{array}
	\right.
\end{equation}
To raise our problem more clearly, we assume that $V\equiv1$.
Denote by $w\in H^1(\r^3)$ the unique radial positive solution of $-\Delta w+ w=  w^{p-1} \text{ in } \r^3$ and let $u(x)=\la^{\frac1{p-2}}w(\sqrt \lambda x)$. Then
$-\Delta u+ \lambda u=  u^{p-1}$ and
\[
a=\int_{\r^3} u^2 \dx= \lambda^{\frac2{p-2}-\frac 32}
\int_{\r^3} w^2 \dx.
\]
We are capable of solving this equation for the unknown $\la$ if $p\ne \frac {10}{3}$. Hence we see that if $p\ne \frac {10}{3}$, we
obtain a solution to \eqref{Sch1} with $V\equiv1$. Notice that if $a< \int_{\r^3} w^2\dx$ (or $a> \int_{\r^3} w^2\dx$) then $\lambda\to +\infty$ as $p\searrow \frac{10}{3} $ (or $\lambda\to +\infty$ as $p\nearrow \frac{10}{3} $).  
It is well known that when $\lambda>0$ is sufficiently large, \eqref{Sch1} without $L^2$-constraints has a concentrated solution (see \cite{del1998,Oh1988} for example), which indicates that the solution to \eqref{Sch1}  has a concentration phenomenon as $p$ tends to the mass-critical exponent. So it is natural to regard the exponent $p$ as a perturbation parameter to study the concentration properties of the solutions to \eqref{Schrodinger}.

In the rest of this paper, we study the following problem
\begin{equation}\label{maineq}\left\{\begin{array}{lll}
		-\Delta u + \lambda  u + \big(|x|^{-1} \ast |u|^{2}\big)u=  V(x) u^{p_{\varepsilon}-1 } ,  \text{ in }  \r^{3},\\[2mm]		
		\int_{\r^3}u^2  \dx = a,\,\, u > 0, \, u \in H^{1}(\mathbb{R}^{3}),
	\end{array}
	\right.
\end{equation}
where $p_{\va} = \frac{10}{3} \pm \va$ and $\va>0$ is a small parameter. We postulate the assumptions on $V(x)$:

\vskip2mm
(V).  $V\in C^2(\r^3)$,  $V(x)\ge C_0>-\infty,$  $V(x)$ has a non-degenerate critical point $b_0\in \r^3$ and $V(b_0)=V_0>0$. 
\vskip2mm

The aim of the paper is to study the existence and local uniqueness of
solutions $u_\va$ for \eqref{maineq}, satisfying
\begin{equation}\label{1-17-5}
	\max_{x \in B_{\theta}(b_0)} u_\va(x) \rightarrow +\infty, \, \text{while}\,\, u_\va(x)\rightarrow 0 \,\,\text{uniformly in}\,\, \r^3\setminus B_{\theta}(b_0),
\end{equation}
as $\va\to 0$, where $\theta>0$ is any small fixed
constant and $b_0$ is a point in $\r^3.$

Let $\bar p = \frac{10}{3}$ and denote by $Q_{\bar{p}}$ the unique radial positive solution of $-\Delta u + u = u^{\bar{p}-1}, \, u \in H^1(\r^3)$. Define
$$a_{*} := \int_{\r^3}Q_{\bar{p}}^2 \dx.$$
Then the existence result is stated  as follows.

\begin{theorem}\label{ExistenceOfNormalized} Suppose (V) holds. If one of the following cases holds:
	\begin{itemize}
		\item [(i)] $a< V_0^{-\frac32}a_{*}$ and $p_{\va} = \bar{p}+\va$;\\
		\item [(ii)]  $a> V_0^{-\frac32}a_{*}$ and $p_{\va} = \bar{p}-\va$,
	\end{itemize}
	then there exists an $\va_0>0$ such that for $\va\in(0,\va_0)$,  \eqref{maineq} has a  solution $u_\va$ satisfying \eqref{1-17-5}.
	Moreover, we have $\la \in \Big(e^{\frac{4}{9\va} \ln (V_0^{-\frac32}\frac{a_{*}}{a})}, e^{\frac{16}{9\va} \ln (V_0^{-\frac32}\frac{a_{*}}{a})}\Big)$  for case $(i)$  and $\la \in \Big(e^{\frac{4}{9\va} \ln (V_0^{\frac32}\frac{a}{a_{*}})}, e^{\frac{16}{9\va} \ln (V_0^{\frac32}\frac{a}{a_{*}})}\Big)$  for case $(ii)$.
\end{theorem}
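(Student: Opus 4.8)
The plan is to construct $u_\va$ by a finite-dimensional Lyapunov--Schmidt reduction, organized around the degeneration to the limit equation recorded in Theorem~\ref{lemma1}. Since the two limits $\va\to0$ and $\lambda\to+\infty$ are tied together only through the mass constraint, I would first treat $\lambda$ as a free large parameter, solve the PDE in \eqref{maineq} without the constraint, and only at the very end invert the relation $\int u_\lambda^2=a$ to pin down $\lambda$ and its range. Setting $\mu=\lambda^{-1/2}$ and choosing a center $\xi$ near $b_0$, the approximate solution is
\[
W_{\lambda,\xi}(x)=\lambda^{\frac1{p_\va-2}}\,V(\xi)^{-\frac1{p_\va-2}}\,Q_{p_\va}\big(\sqrt\lambda\,(x-\xi)\big),
\]
where $Q_{p_\va}$ is the unique positive radial solution of $-\Delta Q+Q=Q^{p_\va-1}$, so that $Q_{p_\va}\to Q_{\bar p}$ as $\va\to0$. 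A direct computation gives $-\Delta W_{\lambda,\xi}+\lambda W_{\lambda,\xi}=V(\xi)W_{\lambda,\xi}^{p_\va-1}$, and in the rescaled variable $y=\sqrt\lambda\,(x-\xi)$ the Riesz term carries the prefactor $\lambda^{\frac2{p_\va-2}-2}=\lambda^{-1/2+O(\va)}\to0$ while $V(\xi+\mu y)\to V(\xi)$. This exhibits both the Poisson term and the inhomogeneity of $V$ as lower-order perturbations of the autonomous problem, exactly as predicted by Theorem~\ref{lemma1}.

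Next I would linearize at $Q_{p_\va}$, using the operator $L_{p_\va}=-\Delta+1-(p_\va-1)Q_{p_\va}^{p_\va-2}$, whose kernel is spanned by $\partial_{y_i}Q_{p_\va}$, $i=1,2,3$; non-degeneracy of the ground state holds uniformly for $p_\va$ near $\bar p=\tfrac{10}{3}$ (subcritical). Writing $u=W_{\lambda,\xi}+\phi$ with $\phi$ orthogonal to this kernel, I project the equation onto the complement of the kernel and solve the resulting auxiliary equation for $\phi=\phi_{\lambda,\xi}$ by a contraction/implicit-function argument, obtaining a unique small solution with an explicit bound $\|\phi_{\lambda,\xi}\|=o(1)$ in a suitable weighted norm, uniformly for $\lambda$ large and $\xi$ in a small ball around $b_0$. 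Controlling the remainder requires estimating the nonlocal term $(|x|^{-1}\ast W_{\lambda,\xi}^2)W_{\lambda,\xi}$ after rescaling: the $\lambda^{-1/2}$ prefactor renders it subcritical, but its slow decay must be handled with Hardy--Littlewood--Sobolev and weighted estimates.

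It then remains to solve the bifurcation equations, i.e.\ to annihilate the multipliers $c_i(\lambda,\xi)$ of the kernel directions, and afterwards to enforce the mass constraint. The reduced energy depends on $\xi$ at leading order only through the value $V(\xi)$: the linear moment vanishes by the radial symmetry of $Q_{p_\va}$, leaving $V(\xi)$ together with an $O(\mu^2\Delta V)$ correction, so $\nabla_\xi$ of the reduced energy vanishes to leading order precisely when $\nabla V(\xi)=0$. The non-degeneracy of $b_0$ in assumption (V) then yields, by the implicit function theorem (or a degree argument), a solution $\xi=\xi(\lambda)\to b_0$ and hence a genuine solution $u_\lambda$ of the PDE. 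Finally I compute
\[
\int_{\r^3}u_\lambda^2\dx=\lambda^{\frac{2}{p_\va-2}-\frac32}\,V_0^{-\frac2{p_\va-2}}\,\|Q_{p_\va}\|_{L^2}^2\,(1+o(1)),
\]
set this equal to $a$, and solve for $\lambda$. With $\beta(\va):=\frac2{p_\va-2}-\frac32=\mp\frac98\va\,(1+o(1))$ for $p_\va=\bar p\pm\va$, taking logarithms gives
\[
\ln\lambda=\frac{1}{\beta(\va)}\ln\!\Big(\frac{a}{V_0^{-2/(p_\va-2)}\|Q_{p_\va}\|_{L^2}^2}\Big)(1+o(1))=\frac{8}{9\va}\ln\big(V_0^{-\frac32}a_*/a\big)(1+o(1))
\]
for case $(i)$, and the analogue with $\ln(V_0^{\frac32}a/a_*)$ for case $(ii)$. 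This lands in the stated interval (whose midpoint coefficient $8$ sits comfortably inside $(4,16)$) precisely when $a<V_0^{-3/2}a_*$, respectively $a>V_0^{-3/2}a_*$, and $\va$ is small; the sign of $\beta(\va)$ is exactly what forces the case distinction so that $\lambda\to+\infty$.

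The main obstacle is quantitative rather than structural: because $\lambda\sim e^{c/\va}$ is exponentially large, every $o(1)$ error in the reduction and in the mass expansion must be shown to remain negligible after division by $\beta(\va)\sim\va$. Establishing the fixed-point estimate for $\phi_{\lambda,\va}$ and the mass expansion with remainders that are genuinely of lower order in both $\va$ and $\lambda^{-1}$ --- in particular controlling the non-local interaction uniformly across the joint limit --- is the delicate part, and it is what forces the deliberately loose, factor-two window for $\lambda$ in the statement.
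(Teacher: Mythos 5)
Your overall strategy coincides with the paper's: a Lyapunov--Schmidt reduction for the unconstrained equation with $\lambda$ as a free large parameter, uniform in $\va$, followed by imposing the mass constraint to determine $\lambda$, and your arithmetic for the window ($\ln\lambda\approx\frac{8}{9\va}\ln(V_0^{-3/2}a_*/a)$, sitting inside the stated interval) is correct. However, there is a genuine gap at the final step. You write ``set this equal to $a$ and solve for $\lambda$,'' but the mass is only known in the form $f(\lambda)=\frac{a_*}{V_0^{3/2}a}\lambda^{\frac{2}{p_\va-2}-\frac32}\bigl(1+O(\va)\bigr)+O\bigl(\lambda^{\frac{4}{p_\va-2}-\frac72}\bigr)$, with inexplicit error terms, so the equation $f(\lambda)=1$ cannot be inverted algebraically; one must apply the intermediate value theorem on the interval $\bigl(e^{\frac{4}{9\va}\ln(\cdot)},e^{\frac{16}{9\va}\ln(\cdot)}\bigr)$, and for that one needs $\lambda\mapsto\|u_\lambda\|_{L^2}^2$ to be \emph{continuous}. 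The reduction alone does not give this: if $x_\lambda$ is produced by a degree argument (which you offer as an alternative), the branch need not be continuous in $\lambda$. The paper resolves this by proving the \emph{local uniqueness} of the unconstrained peak solutions for all $\lambda>\lambda_0$ uniformly in $\va$ (Theorem \ref{3-7}, via local Pohozaev identities), which forces $f$ to be single-valued and continuous. This is one of the two difficulties the paper explicitly identifies, and your proposal omits it entirely; the implicit-function-theorem variant you mention could in principle substitute for it, but only with additional $C^1$-in-$(\lambda,\xi)$ estimates on the error term that you do not address.

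A second, more structural point: your claim that ``the reduced energy depends on $\xi$ at leading order only through $V(\xi)$'' hides the decisive cancellation. The direct projection of the Poisson term onto the kernel directions, $\int_{\r^3}\bigl(|x|^{-1}\ast U_{x_\la,p_\va}^2\bigr)U_{x_\la,p_\va}\,\partial_{x_j}U_{x_\la,p_\va}\dx$, is of size $\la^{\frac{4}{p_\va-2}-\frac32}$ if estimated crudely, which \emph{dominates} the potential contribution $\nabla V(x_\la)\,\la^{\frac{2}{p_\va-2}-\frac12}$ by a factor $\la^{\frac{2}{p_\va-2}-1}\to\infty$. The argument only works because this projection vanishes \emph{identically} by the antisymmetry of the kernel $\frac{x_j-y_j}{|x-y|^3}$ (identity \eqref{symetry} in the paper), after which only the cross terms with $\omega_\la$ survive and are genuinely lower order. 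Without stating and using this exact cancellation, the assertion that the bifurcation equation reduces to $\nabla V(x_\la)=O(\la^{-1/2})$ is unjustified, and the localization at critical points of $V$ does not follow.
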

\begin{remarks}\label{remarks}
	(1). The case (i) and (ii) of Theorem \ref{ExistenceOfNormalized} extend some of the results in \cite{Jeanjean2021} and \cite{Bellazzini2011-1}  to the case with potential functions, respectively. And Theorem \ref{ExistenceOfNormalized} provides the effect of potential functions on the existence of solutions. Unlike \cite{Jeanjean2021,Bellazzini2011-1}, we adopt a finite dimensional reduction method to prove the existence of solutions. Due to the effect of Poisson terms, we have to put the potential $V(x)$ into the nonlinear term $u^{p_\va-1}$ to ensure that the approximate solution is ``good''. We will give a profound analysis near the equation \eqref{lambda-2}.
	
	(2). It is the positive critical value of the potential $V$
	that affects the existence of single-peak solutions for problem \eqref{maineq}. Precisely, the positivity of the critical value $V_0$  ensures the existence of positive solutions, as well as that the manifold $\Big\{u: \|u\|_{L^2(\r^3)}=a\Big\}$ is not empty. 
	
	(3). We are also concerned about the normalized multi-peak solutions, but the Poisson term will lead to delicate interactions among peaks.
	Similar problems also appear in \cite{Luo2020,Li2021} where the authors pointed out that the asymptotic behaviors of concentration points are quite different from those of Schr\"odinger equations due to the nonlocal terms.  We postpone the study to future work.
	
	(4). We also remark that one can consider the problem \eqref{Schrodinger} by regarding the mass $a$ as a perturbed parameter. This strategy can be found in \cite{Pellacci2021,Yan2021,Guo2}. The solutions to \eqref{Schrodinger} with $p=\bar{p}$ will also exhibit a concentration phenomenon as $a\to V_0^{-\frac32}a_{*}$. Precisely, by careful calculations of Pohozaev identities (similar to  \cite[Proposition 3.5]{Yan2021}), one can show that the solution $u_a$ to \eqref{Schrodinger} concentrates at a non-degenerate critical point of $V(x)$ when $a\nearrow V_0^{-\frac32}a_{*}$ and $\Delta V(b_0)>0$, or $a\searrow V_0^{-\frac32}a_{*}$ and $\Delta V(b_0)<0$.
\end{remarks}

Now, let us outline the proof of Theorem \ref{ExistenceOfNormalized}.

Firstly, we consider \eqref{maineq} without $L^2$-constraints, namely
\begin{equation}\label{lambda-1}\left\{\begin{array}{lll}
-\Delta u + \lambda  u +  \big(|x|^{-1} \ast |u|^{2}\big)u= V(x) u^{p_{\varepsilon}-1 } ,    \text{ in }  \r^{3},\\[2mm]
u > 0, u \in H^{1}(\mathbb{R}^{3}).
\end{array}
\right.
\end{equation}
Let us regard $\lambda$ as the perturbation parameter for a moment. Then for any fixed $\va>0$ small, we can use Lyapunov-Schmidt reduction method to obtain a concentrated solution. We refer to the proof in \cite{Ambrosetti2008,DAprile1,Yu2021} for example. Precisely, \eqref{lambda-1}
has  a solution $u_\lambda$ of the form
\begin{equation}\label{2-1}
u_{\la} = U_{x_{\la},p_\va} + \omega_\la,
\end{equation}
where  $U_{x_{\la},p_\va}(x)=\big(\frac{\la}{V_0}\big)^{\frac{1}{p_{\va}-2}}Q_{p_\va} \big(\sqrt{\lambda} (x - x_{\la})\big)$,
$V_0=V(b_0)>0$,
$Q_{p_\va}(x)$ is  the unique radial solution  to
\begin{equation}\label{1-31-5}
 - \Delta Q + Q = Q^{p_{\va}-1}, \,Q>0,\, Q \in H^{1}( \mathbb{R}^{3}),
\end{equation}
$x_{\la}\to b_0$, and $\int_{\mathbb{R}^{3}} \bigl(|\nabla \omega_\la|^{2} +(\la+V(x)) \omega_\la^{2} \bigr)  \dx=o\bigl(\lambda^{\frac{2}{p_\va-2}-\frac 12}\bigr)$  as $\lambda\to +\infty$.

Let us make a further elaboration about Remark \ref{remarks} (1) here. The problem \eqref{lambda-1} is quite different from \eqref{lambda-14} and \eqref{lambda-13}. If we consider the following singularly perturbed problem 
\begin{equation}\label{lambda-2}\left\{\begin{array}{lll}
		-\Delta u + \lambda  u + V(x) u + \big(|x|^{-1} \ast |u|^{2}\big)u=  u^{p-1 } ,    \text{ in }  \r^{3},\\[2mm]
		u > 0, u \in H^{1}(\mathbb{R}^{3}),
	\end{array}
	\right.
\end{equation}
for fixed $p\in(2,6)$, then, by revisiting calculations in Lemma \ref{2-2}, we will find that the Poisson term $ \big(|x|^{-1} \ast |u|^{2}\big)u$ plays a dominant role compared to the potential term $V(x) u$, which leads to the negligible influence of the potential $V(x)$ on the location of concentration points. When we consider the problem \eqref{lambda-1},  Lemma \ref{2-2} shows that the contributions of $V(x) u^{p_{\varepsilon}-1 }$ and $\big(|x|^{-1} \ast |u|^{2}\big)u$ to the error term $\omega_{\la}$ are roughly the same which is very bad to us. However, thanks to the symmetry property \eqref{symetry}, we can still obtain the effect of the potential $V(x)$ on the location of concentration points in solving a finite dimensional problem.  In view of the above analysis, it is also natural to consider the following equation
\begin{equation}\label{Schrodinger3}\left\{\begin{array}{lll}
		-\Delta u + \la  u +  \big(|x|^{-1} \ast V(x)|u|^{2}\big)V(x)u = u^{ p-1 } , \,   \text{ in } \r^{3},\\[2mm]		
		\int_{\r^3}u^2 \dx = a,\,\, u > 0, u \in H^{1}(\mathbb{R}^{3}),
	\end{array}
	\right.
\end{equation}
and we believe that it is also the critical points of $V(x)$ that affect the existence of concentrated solutions to \eqref{Schrodinger3}.

Secondly, we will focus on determining the dependence of $\la$ on $\va$.
Let
\[\bar u_{\la} = \frac{\sqrt{a} u_{\la}}{\|u_{\la}\|_{L^{2}(\mathbb{R}^{3})}},\]
then $\bar u_{\la}$ satisfies the equation
\begin{equation}\label{15-18-5}\left\{\begin{array}{lll}
-\Delta \bar u_{\la} + \lambda \bar u_{\la} 
+  \Bigl(\frac{ \| u_{\la}\|_{L^{2}(\r^3)}}{\sqrt a}\Bigr)^2 (|x|^{-1} \ast |\bar u_{\la}|^{2})\bar u_{\la}
= V(x) \Bigl(\frac{ \| u_{\la}\|_{L^{2}(\r^3)}}{\sqrt a}\Bigr)^{p_{\va}-2}{\bar u}_{\la}^{p_{\va} -1} ,   \text{ in } \r^{3},\\[2mm]
\int_{\r^3}|\bar u_{\la}|^{2}dx =a. 
\end{array}
\right.
\end{equation}
We will obtain a solution to \eqref{maineq} once we prove that $\frac{\| u_{\la}\|_{L^{2}(\r^{3})}}{\sqrt{a}} =1$.
In view of Theorem~\ref{ExistenceOfNormalized}, we expect 
that for each $\va>0$ small, there is $\la \in \Big(e^{\frac{4}{9\va} \ln (V_0^{-\frac32}\frac{a_{*}}{a})}, e^{\frac{16}{9\va} \ln (V_0^{-\frac32}\frac{a_{*}}{a})}\Big)$  for case $(i)$  $\Big($or $\la \in \Big(e^{\frac{4}{9\va} \ln (V_0^{\frac32}\frac{a}{a_{*}})}, e^{\frac{16}{9\va} \ln (V_0^{\frac32}\frac{a}{a_{*}})}\Big)$  for case $(ii)$ $\Big)$,
  such that $\frac{\| u_{\la}\|_{L^{2}(\r^{3})}}{\sqrt{a}} =1$.

To carry out the above procedure, there are a couple of issues we need to deal with.

The first one is the existence of solutions of the form
\eqref{2-1} for \eqref{lambda-1} if $\lambda >e^{\frac{4}{9\va} \ln (V_0^{-\frac32}\frac{a_{*}}{a})}$  for case $(i)$  $\Big($or $\la > e^{\frac{4}{9\va} \ln (V_0^{\frac32}\frac{a}{a_{*}})}$  for case $(ii)$ $\Big)$. All the above results on the existence and local uniqueness of peak
solutions for \eqref{lambda-1} are obtained with the assumption that $p_\va$ is fixed
and no estimate on the parameter $\lambda$ is given.  To solve this issue, our idea is to first give a existence result of solutions of the form
\eqref{2-1} for \eqref{lambda-1} for $\la>\la_0$, where $\la_0$ is independent of $\va$, and then establish the relation between $\la$ and $\va$.

The second one is that to solve $\frac{\| u_{\la}\|_{L^{2}(\r^{3})}}{\sqrt{a}}=1$, we need to
prove the continuity of $\| u_{\la}\|_{L^{2}(\r^3)}$ as a function of $\lambda$. Hence, it is essential to prove the uniqueness
of solutions of the  form \eqref{2-1}, we refer to \cite{Guo2,Li2021,Luo2023} for example. We also need to  revisit the problem for the local uniqueness of peak
solutions of \eqref{lambda-1} to make sure that such results can be obtained for
all $\lambda>\lambda_0$ with $\lambda_0>0$ independent of $\va>0$. 

Thanks to the uniformity of $\la$ and uniqueness of peak
solutions, we can explore the dependence of $\la$ on $\va$ through the mass constraint. This indicates that the perturbation parameter can be changed from $\la$ to $\va$, and then Theorem \ref{ExistenceOfNormalized} is proved. 
To address all these issues, it requires us to estimate
$\| Q_{p_\va}-Q_{\bar{p}}\|_{H^{1}(\mathbb{R}^{3})}$ (See Lemma \ref{4-1} in Appendix A).

In the proof of Theorem~\ref{ExistenceOfNormalized}, we have constructed a single-peak solution, such that it has exactly
one peak in $B_{\theta}(b_0)$. In this paper, we
 will also prove the local uniqueness of such solution.

\begin{theorem}\label{UniquenessOfNormalized} Suppose (V) holds. Let $u_\varepsilon^{(1)}$ and $u_\varepsilon^{(2)}$ be two single-peak solutions of \eqref{maineq},
 which satisfy \eqref{1-17-5}. Then there exists a small positive number $\varepsilon_0$,  such that $u_\varepsilon^{(1)} = u_\varepsilon^{(2)}$ for all $\varepsilon$ with $0<\varepsilon < \varepsilon_0$.
\end{theorem}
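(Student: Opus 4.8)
The plan is to argue by contradiction via a blow-up analysis of the normalized difference of the two solutions, closed by local Pohozaev identities that exploit the non-degeneracy of $b_0$. Suppose the conclusion fails: along a sequence $\va\to0$ (not relabelled) one has $u_\va^{(1)}\not\equiv u_\va^{(2)}$; write $\la_\va^{(\ell)}$ and $x_\va^{(\ell)}\to b_0$ for the Lagrange multipliers and peaks of $u_\va^{(\ell)}$, $\ell=1,2$. From the representation \eqref{2-1}, the $\va$-uniform reduction and uniqueness estimates for \eqref{lambda-1}, and the $\la$--$\va$ correspondence of Theorem \ref{ExistenceOfNormalized}, both multipliers lie in the same interval, so $\la_\va^{(1)}\asymp\la_\va^{(2)}=:\la_\va\to+\infty$; moreover $\la_\va^{(1)}/\la_\va^{(2)}\to1$ and $\sqrt{\la_\va}\,|x_\va^{(1)}-x_\va^{(2)}|\to0$, so the rescaled profiles
\[
\hat u_\va^{(\ell)}(y):=\Big(\tfrac{\la_\va^{(\ell)}}{V_0}\Big)^{-\frac1{p_\va-2}}u_\va^{(\ell)}\Big(x_\va^{(\ell)}+\tfrac{y}{\sqrt{\la_\va^{(\ell)}}}\Big)
\]
are centred at the same point to leading order and converge in $C^1_{loc}(\r^3)$ to the ground state $Q_{\bar p}$ of \eqref{1-31-5} at $p=\bar p$; here Lemma \ref{4-1} guarantees $\|Q_{p_\va}-Q_{\bar p}\|_{H^1(\r^3)}\to0$, so that the limiting profile is indeed $Q_{\bar p}$. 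Set $\xi_\va:=(u_\va^{(1)}-u_\va^{(2)})/\|u_\va^{(1)}-u_\va^{(2)}\|_{L^\infty(\r^3)}$, so $\|\xi_\va\|_{L^\infty}=1$.

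Subtracting the two copies of \eqref{maineq} and dividing by $\|u_\va^{(1)}-u_\va^{(2)}\|_{L^\infty}$, the function $\xi_\va$ solves a linear equation of the form
\[
-\Delta\xi_\va+\la_\va^{(1)}\xi_\va+\mathcal N_\va[\xi_\va]
= V(x)\,c_\va(x)\,\xi_\va+\big(\la_\va^{(1)}-\la_\va^{(2)}\big)\frac{u_\va^{(2)}}{\|u_\va^{(1)}-u_\va^{(2)}\|_{L^\infty}},
\]
where $c_\va:=\big((u_\va^{(1)})^{p_\va-1}-(u_\va^{(2)})^{p_\va-1}\big)/(u_\va^{(1)}-u_\va^{(2)})$ and $\mathcal N_\va$ collects the linearization of the Poisson term. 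Rescaling at the scale $\la_\va^{-1/2}$ about $x_\va^{(1)}$ and invoking the estimates behind Lemma \ref{2-2}, the Poisson contribution carries an extra vanishing factor and is therefore subordinate to the local terms, while the multiplier source will be shown small (see below). Elliptic estimates then yield $C^1_{loc}$ compactness of the rescaled $\tilde\xi_\va$, whose limit $\xi_0$ is bounded and solves
\[
-\Delta\xi_0+\xi_0=(\bar p-1)\,Q_{\bar p}^{\,\bar p-2}\,\xi_0\qquad\text{in }\r^3.
\]
To guarantee $\xi_0\not\equiv0$ I would locate the maximum of $|\xi_\va|$: using the uniform decay of $u_\va^{(\ell)}$ outside $B_\theta(b_0)$ from \eqref{1-17-5} together with a maximum principle for the difference operator (valid since $\la_\va\to+\infty$), the supremum is attained in the concentration region, so $\|\xi_0\|_{L^\infty}=1$.

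By the non-degeneracy of $Q_{\bar p}$, the kernel of the limit operator is spanned by the translations, hence $\xi_0=\sum_{i=1}^3 b_i\,\pa_{y_i}Q_{\bar p}$. To force $b=(b_1,b_2,b_3)=0$ I would apply local Pohozaev identities to the difference equation on $B_\theta(b_0)$: multiplying by $\pa_{x_i}(u_\va^{(1)}+u_\va^{(2)})$, integrating, dividing by $\|u_\va^{(1)}-u_\va^{(2)}\|_{L^\infty}$ and passing to the limit, the boundary terms are negligible by the decay, the Poisson and multiplier contributions drop out to leading order, and one is left with a homogeneous system $D^2V(b_0)\,b=0$. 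Non-degeneracy of $b_0$ makes $D^2V(b_0)$ invertible, so $b=0$ and $\xi_0\equiv0$, contradicting $\|\xi_0\|_{L^\infty}=1$. The difference of multipliers is handled in parallel: the common mass $\int_{\r^3}(u_\va^{(1)})^2=\int_{\r^3}(u_\va^{(2)})^2=a$ gives $\int_{\r^3}(u_\va^{(1)}+u_\va^{(2)})\xi_\va=0$, and testing the difference equation against $u_\va^{(1)}+u_\va^{(2)}$ expresses $\la_\va^{(1)}-\la_\va^{(2)}$ through quantities that vanish in the limit, which justifies discarding the multiplier source above.

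The main obstacle is the nonlocal Poisson term. As the heuristics around \eqref{lambda-2} indicate, after the mass-critical rescaling the Poisson term and the potential term are a priori of comparable size, so one must verify, uniformly in $\va$ rather than for a fixed $p$, that in the linearized difference equation and in the Pohozaev identities the Poisson contribution remains strictly subordinate to the local terms, while controlling $\la_\va^{(1)}-\la_\va^{(2)}$ simultaneously. Securing these estimates with constants independent of $\va$ is exactly what compels the use of the quantitative bound on $\|Q_{p_\va}-Q_{\bar p}\|_{H^1(\r^3)}$ from Lemma \ref{4-1} together with the $\va$-uniform reduction estimates underlying Theorem \ref{ExistenceOfNormalized}.
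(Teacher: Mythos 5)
Your overall architecture (contradiction, normalized difference $\xi_\va$, blow-up at scale $\la_\va^{-1/2}$, exterior decay by comparison, Pohozaev identities plus non-degeneracy of $D^2V(b_0)$ to kill the translation modes, and the mass constraint $\int_{\r^3}(u_\va^{(1)}+u_\va^{(2)})\xi_\va\dx=0$) matches the paper's. But there is a genuine gap at the heart of the argument: you discard the Lagrange-multiplier source term $\big(\la_\va^{(1)}-\la_\va^{(2)}\big)u_\va^{(2)}/\|u_\va^{(1)}-u_\va^{(2)}\|_{L^\infty}$ as negligible, and it is not. The paper's Lemma \ref{blowup} shows that, after rescaling and dividing by $\la_\va^{(1)}$, this term equals $-\frac{p_\va-2}{a_{*,\va}}Q_{p_\va}(x)\int_{\r^3}Q_{p_\va}^{p_\va-1}\bar\xi_\va\dx$ up to errors of order $\ln\la_\va^{(1)}/\sqrt{\la_\va^{(1)}}$, i.e.\ it survives at leading order as a rank-one perturbation of the linearized operator. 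Your justification for dropping it is circular: testing against $u_\va^{(1)}+u_\va^{(2)}$ expresses $\la_\va^{(1)}-\la_\va^{(2)}$ through $\int Q_{p_\va}^{p_\va-1}\bar\xi_\va\dx$, which vanishes in the limit only if the limit profile is a pure combination of translations --- but that is exactly what you are trying to prove. With the rank-one term retained, the relevant (approximate) kernel is four-dimensional (Lemma \ref{limiteq}): besides $\partial_{x_j}Q_{p_\va}$ it contains the dilation mode $\psi_{\va,0}=Q_{p_\va}+\frac{p_\va-2}{2}x\cdot\nabla Q_{p_\va}$, whose coefficient $\ga_{\va,0}$ your limit equation cannot see and your Pohozaev step cannot eliminate.

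Killing $\ga_{\va,0}$ is the genuinely delicate step you are missing. The mass constraint pairs $\bar\xi_\va$ against $Q_{p_\va}$, and $\int_{\r^3}Q_{p_\va}\psi_{\va,0}\dx=\big(1-\frac{3(p_\va-2)}{4}\big)a_{*,\va}=\frac34\big(\frac{10}{3}-p_\va\big)a_{*,\va}$ degenerates like $\pm\va$ as $p_\va\to\bar p$; so extracting $\ga_{\va,0}=o_\va(1)$ requires dividing the error terms by $\va$ and needs the quantitative relation $\va=O\big((\ln\la_\va^{(1)})^{-1}\big)$, which in turn rests on the precise asymptotics $\la_\va\sim\Lambda_\va$ of Lemma \ref{prop-lambda} and the improved estimate $|x_\va-b_0|=O(\la_\va^{-1})$. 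None of this machinery appears in your proposal, and without it the argument does not close: a nonzero dilation component in the limit would be consistent with everything you establish. (Your treatment of the Poisson term as subordinate after rescaling, by contrast, is correct and agrees with the paper.)
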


There is a huge literature on local uniqueness results for spike-like solutions. We refer to \cite{Glangetas,Grossi,Cao3,Deng,Guo5,Guo4,Yan2021,Li2021} and the references therein.
What we need to point out is that the solution of \eqref{maineq} is a pair $(\la_{\va}, u_\va)$.  Thus we need to clarify the dependence of the Lagrange multiplier $\lambda_\va$ on $\va$ more precisely (See Lemma \ref{prop-lambda}). The unknown $\la$ in problem \eqref{maineq} plays a similar role to the altitude of bubbles in  the semi-linear elliptic problems with critical Sobolev exponent, see  \cite{Deng}. Inspired by it, we adopt a similar strategy to prove Theorem~\ref{UniquenessOfNormalized} in this paper. Moreover, we shall deal with the nonlocal terms in local Pohozaev identities, which forces us to consider single-peak solutions only.

\medskip

This paper is organized as follows. In Section 2, we revisit the
singularly perturbed problem \eqref{lambda-1} and prove
the  existence and local uniqueness of solutions for $\lambda>\lambda_0$, where
$\lambda_0>0$ is a large constant which is independent of $\va>0$.
 We show the existence and local uniqueness of normalized solutions to \eqref{maineq} in Section 3. In the Appendix, we list a useful estimate of the solution to \eqref{1-31-5} and the well known Hardy-Littlewood-Sobolev inequality which are frequently used in this paper.
 
 \medskip
 
 \textbf{Notations.} The symbol $C$ represents a positive constant that may change from place to place but is independent of $\la$ and $\va$.  The symbol $O(t)$ means $|O(t)|/ |t|\leq C$.
The symbol $o_\la(1)$ means some infinitesimal which tends to zero as $\la\to +\infty$,  the symbol $o_\va(1)$ means some infinitesimal which tends to zero as $\va\to 0$, and the symbol $o(1)$ means some infinitesimal which uniformly tends to zero for $\va\in(0,\va_0)$ as $\la\to +\infty$. Moreover, the symbols $o_\la(t)$, $o_\va(t)$ and $o(t)$ mean $o_\la(t)/t =o_\la(1)$, $o_\va(t)/t =o_\va(1)$ and $o(t)/t =o(1)$ respectively.

\section{Revisit the singularly perturbed problem}

In this section, we revisit the following singularly perturbed problem
\begin{equation}\label{lambda}\left\{\begin{array}{lll}
-\Delta u + \lambda u +  \big(|x|^{-1} \ast |u|^{2}\big)u= V(x) u^{p_{\varepsilon}-1 } ,   \text{ in }  \r^{3},\\[2mm]
u > 0, u \in H^{1}(\mathbb{R}^{3}).
\end{array}
\right.
\end{equation}
We assume that the function  $V(x)$ satisfies the condition (V).

\subsection{Existence}

We aim to
 construct a positive solution of  the form \eqref{2-1}, namely
 \[u_{\la} = U_{x_{\la},p_\va} + \omega_\la,\]
 where the approximate solution is defined as
 \[U_{x_{\la},p_\va}(x)=\Big(\frac{\la}{V_0}\Big)^{\frac{1}{p_{\va}-2}}Q_{p_\va} \big(\sqrt{\lambda} (x - x_{\la})\big),\]
 where $V_0=V(b_0)>0$, $Q_{p_\va}(x)$ is the solution of \eqref{1-31-5}, and $\omega_\la$ is an error function.

Define a norm in $H^{1}(\mathbb{R}^{3})$ by
\[ \|u\|_{\la} :=  \Bigl(\int_{\mathbb{R}^{3}} \bigl(|\nabla u|^{2} +\la u^{2} \bigr) \dx \Bigr)^{\frac{1}{2}}, \]
endowed with the inner product
\[ \langle u, v\rangle_{\la} =\int_{\mathbb{R}^{3}} \bigl(\nabla u \nabla v+\la u v \bigr) \dx,  \, \, u, v \in H^{1}(\mathbb{R}^{3}) .\]
Denote the function space $H_\la$ by
\[
H_\la:=\Big\{u\in H^{1}(\mathbb{R}^{3}):  \|u\|_{\la}<+\infty\Big\}.
\]
Let $\theta>0$ be a small constant.  For any $x_{\la}\in B_\theta(b_0)$, we define
\begin{equation}\label{2-e}
E_{\lambda ,p_\va} := \biggl\{u \in H^{1}(\mathbb{R}^{3}): \langle u, \frac{\partial U_{x_{\la}, p_{\va}}}{\partial x_{j}}\rangle_{\la}=0, j=1,2,3 \biggr\}.
\end{equation}

It is well known that for any given $p_\va\in(2,6)$, there exists a constant $c_\va>0$,
depending on $\va$, such that for any $\lambda>c_\va$, \eqref{lambda}
has a solution of the form \eqref{2-1}, satisfying
$x_{\la}\to b_0$, $\omega_\la\in E_{\lambda ,p_\va} $, and $\|\omega_\la\|_{\la}^{2} =o_\la\bigl(\lambda^{\frac{2}{p_\va-2}-\frac 12}\bigr)$  as $\lambda\to +\infty$. But these may not hold if $\va$ tends to $0$.
In order to prove Theorem~\ref{ExistenceOfNormalized}, we need to
find a uniform bound for $c_\va>0$. With the
help of Lemma~\ref{4-1}, we will prove the following result.

\begin{theorem}\label{ExistenceWithoutNormalized} Suppose (V) holds. Then
there exist constants $\va_0>0$ and  $\lambda_0>0$, such that for
any $\lambda>\lambda_0$ and $0<\va<\va_0$,
 \eqref{lambda} has a solution of the form
\begin{equation}\label{2}
u_{\la} =  U_{x_{\la},p_\va} + \omega_\la,
\end{equation}
where  $\omega_\la\in E_{\lambda ,p_\va} $. Moreover, $x_{\la}$ satisfies $|x_{\la}-b_0| = O(\la^{-\frac{1}{2}})$ and
\[\|\omega_{\la}\|_{\la}\leq C \Bigl(  \big| \nabla V(x_{\la}) \big| \la^{\frac{1}{p_{\va}-2} -\frac {3}{4}}  +  \la^{\frac{3}{p_{\va}-2} -\frac{9}{4}}\Bigr). \]
\end{theorem}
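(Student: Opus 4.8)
The plan is to run a Lyapunov--Schmidt finite-dimensional reduction in $H_\la$, treating $\la$ as the large parameter and tracking every constant so that it stays uniform for $\va\in(0,\va_0)$. Write $u=U_{x_{\la},p_\va}+\omega_\la$ with $\omega_\la\in E_{\lambda ,p_\va}$. The ansatz is tuned so that $-\Delta U_{x_{\la},p_\va}+\la U_{x_{\la},p_\va}=V_0\,U_{x_{\la},p_\va}^{p_\va-1}$ holds exactly, so the error of the ansatz splits cleanly:
\[
R_\la:=\big(V_0-V(x)\big)U_{x_{\la},p_\va}^{p_\va-1}+\big(|x|^{-1}\ast U_{x_{\la},p_\va}^2\big)U_{x_{\la},p_\va}.
\]
The first step is to bound $R_\la$ in the dual norm $\|\cdot\|_{*}$ of $(H_\la,\|\cdot\|_\la)$. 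After rescaling $y=\sqrt\la\,(x-x_{\la})$, expanding $V$ about $x_\la$, and using the a priori localization $|x_\la-b_0|=O(\la^{-\frac12})$ (so the zeroth-order mismatch $V_0-V(x_\la)=O(\la^{-1})$ is of higher order), the potential piece contributes a term of order $|\nabla V(x_\la)|\,\la^{\frac{1}{p_\va-2}-\frac34}$, while the Poisson convolution rescales with an explicit factor $\la^{-1}$ and, by the Hardy--Littlewood--Sobolev inequality, contributes $\la^{\frac{3}{p_\va-2}-\frac94}$. Uniformity in $\va$ of the underlying rescaled integrals of $Q_{p_\va}$ follows from Lemma~\ref{4-1}, which controls $\|Q_{p_\va}-Q_{\bar p}\|_{H^1(\r^3)}$.

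The second step, and the one I expect to be the main obstacle, is the uniform invertibility of the linearized operator
\[
L_\la\omega=-\Delta\omega+\la\omega-(p_\va-1)V(x)U_{x_{\la},p_\va}^{p_\va-2}\omega+2\big(|x|^{-1}\ast(U_{x_{\la},p_\va}\omega)\big)U_{x_{\la},p_\va}+\big(|x|^{-1}\ast U_{x_{\la},p_\va}^2\big)\omega
\]
on $E_{\lambda ,p_\va}$. I would show that, after rescaling, $L_\la$ is a relatively small perturbation of the limit operator $-\Delta+1-(p_\va-1)Q_{p_\va}^{p_\va-2}$ (the two nonlocal contributions carry the extra factor $\la^{-1}$). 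The non-degeneracy of $Q_{p_\va}$, whose kernel is exactly $\mathrm{span}\{\partial_j Q_{p_\va}\}$ and which persists uniformly for $p_\va$ near $\bar p$ by Lemma~\ref{4-1}, together with the orthogonality built into $E_{\lambda ,p_\va}$, should yield a coercivity estimate $\|L_\la\omega\|_{*}\ge\rho\,\|\omega\|_\la$ with $\rho>0$ independent of $\la>\la_0$ and $\va\in(0,\va_0)$. The difficulty is precisely that one approaches the mass-critical exponent $\bar p=\tfrac{10}{3}$, so one must rule out any loss of the spectral gap uniformly in $\va$ while simultaneously absorbing the nonlocal terms; this is where the quantitative closeness of $Q_{p_\va}$ to $Q_{\bar p}$ is indispensable.

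Granting coercivity, the third step is routine. For each admissible $x_\la$ I would rewrite the projected equation as the fixed-point problem $\omega=-L_\la^{-1}\big(R_\la+N_\la(\omega)\big)$, where $N_\la$ collects the terms of order $\ge 2$ in $\omega$ (both the power nonlinearity and the quadratic Poisson correction), and solve it by the contraction mapping principle in a ball of radius a fixed multiple of $\|R_\la\|_{*}$. Checking that $N_\la$ is a contraction there is a standard Taylor estimate, and the resulting unique solution $\omega_\la=\omega_\la(x_\la)$ obeys $\|\omega_\la\|_\la\le C\,\|R_\la\|_{*}$, hence the asserted bound.

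The construction is completed by the finite-dimensional step. The Lagrange multipliers $c_j(x_\la)$ produced by the projection vanish if and only if $x_\la$ solves a reduced system obtained by testing the full equation against $\partial U_{x_{\la},p_\va}/\partial x_j$. Here the symmetry property \eqref{symetry} is decisive: the nonlocal term is radial about $x_\la$ and is paired against the odd functions $\partial_j U_{x_{\la},p_\va}$, so it drops out, and after an integration by parts the reduced system reads $\nabla V(x_\la)+o(1)=0$ at leading order. Since $b_0$ is a non-degenerate critical point of $V$, a Brouwer degree (equivalently, implicit function) argument furnishes a zero $x_\la\to b_0$ with $|x_\la-b_0|=O(\la^{-\frac12})$, which justifies the a priori localization used above. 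Finally $u_\la=U_{x_{\la},p_\va}+\omega_\la>0$ because the positive bump $U_{x_{\la},p_\va}$ dominates the small remainder $\omega_\la$. Having kept all constants independent of $\va$, this yields the claimed solution for every $\la>\la_0$ and $0<\va<\va_0$.
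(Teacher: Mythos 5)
Your proposal is correct and follows essentially the same route as the paper: the same decomposition $u=U_{x_\la,p_\va}+\omega_\la$ with the same splitting of the error into the potential mismatch and the Poisson term, uniform coercivity of the linearized operator on $E_{\lambda,p_\va}$ obtained from the non-degeneracy of $Q_{\bar p}$ together with the quantitative closeness $\|Q_{p_\va}-Q_{\bar p}\|_{H^1}\le C\va$ of Lemma~\ref{4-1}, the contraction mapping step, and the reduced equation $\nabla V(x_\la)=O(\la^{-1/2})$ after the symmetry cancellation \eqref{symetry} of the nonlocal term. The only (cosmetic) difference is that you impose the localization $|x_\la-b_0|=O(\la^{-1/2})$ a priori and close it at the end, whereas the paper keeps the $|x_\la-b_0|^2$ contribution explicit in the estimate of $l_\la$ and deduces the localization a posteriori from the reduced problem.
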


Suppose equation \eqref{lambda} has a solution of the form \eqref{2}, then $\omega_{\la}$ satisfies the following problem
\begin{equation}\label{2-1-0}
L_{\la} \omega = l_{\la} + R_{\la}(\omega), \,\, \omega \in E_{\lambda ,p_\va},
\end{equation}
where $l_\lambda\in E_{\lambda ,p_\va}$ is determined by
\[
\begin{aligned}
	\langle l_{\la}, \varphi\rangle_{\la} =& 
	\int_{\mathbb{R}^3}\big[V(x)- V_0\big]U_{x_{\la},p_\va}^{p_\va -1}\varphi \dx 
	- \int_{\r^3}\big(|x|^{-1}\ast U_{x_{\la},p_\va} ^2\big)U_{x_{\la},p_\va} \varphi \dx
	,\quad\forall \varphi \in E_{\lambda ,p_\va}, 
\end{aligned}
\]
the linear map $L_\la:  E_{\lambda ,p_\va}\to E_{\lambda ,p_\va}$ is determined
by
\[
\begin{aligned}
	\langle L_{\la} \omega , \varphi \rangle_{\la} =& \int_{\mathbb{R}^{3}} \Bigl[\nabla \omega \nabla \varphi +  \lambda \omega \varphi   
	-(p_{\va} -1)V(x) U_{x_{\la},p_\va} ^{p_{\va}-2} \omega \varphi  \\
	&+ \big(|x|^{-1}\ast U_{x_{\la},p_\va} ^2\big) \omega \varphi
	+2 \big(|x|^{-1}\ast (U_{x_{\la},p_\va} \omega)\big)  U_{x_{\la},p_\va}  \varphi
	\Bigr] \dx ,  \quad \forall \varphi \in E_{\lambda ,p_\va}, 
\end{aligned} 
\]
and $R_{\la}(\omega) \in E_{\lambda ,p_\va}$ is determined by
\[ 
\begin{aligned}
	\langle R_{\la}(\omega), \varphi \rangle_{\la} =& \int_{\mathbb{R}^{3}} V(x)\Bigl[ (U_{x_{\la},p_\va}  +\omega)^{p_{\va}-1}  - U_{x_{\la},p_\va} ^{p_{\va}-1} - (p_{\va} -1) U_{x_{\la},p_\va} ^{p_{\va}-2} \omega\Bigr] \varphi \dx \\
	&-2 \int_{\r^3}\big(|x|^{-1}\ast (U_{x_{\la},p_\va} \omega)\big) \omega \varphi \dx
	- \int_{\r^3}\big(|x|^{-1}\ast \omega^2\big) \omega \varphi \dx
	 , \quad \forall \varphi \in E_{\lambda ,p_\va}.
\end{aligned}
\]

The procedure to prove Theorem~\ref{ExistenceWithoutNormalized} consists of two parts. one part is to prove the existence of $\omega_{\la}\in E_{\lambda ,p_\va}$ such that \eqref{2-1-0} holds, so that the problem \eqref{lambda} is reduced to a finite dimensional problem. Another part is to prove the existence of $x_\la$ such that the finite dimensional problem is solvable.
We will focus on the estimates which are uniform for $\va\in (0, \va_0)$ in the proof of Theorem~\ref{ExistenceWithoutNormalized}.

\begin{lemma}\label{lemma1}
	There exists a constant $C>0$, independent of $\va$, such that 
	\[
	\|L_\la u\|_\la \leq C \|u\|_\la, \quad \forall u \in H_\la.
	\]
\end{lemma}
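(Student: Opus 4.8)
The plan is to pass from the operator bound to a bound on the bilinear form $B_\la(\omega,\varphi):=\langle L_\la\omega,\varphi\rangle_\la$ and conclude by duality. Indeed, $L_\la u$ is defined through its inner products against test functions, so by the Riesz representation theorem $\|L_\la u\|_\la=\sup\{|B_\la(u,\varphi)|:\varphi\in H_\la,\ \|\varphi\|_\la\le 1\}$, and it suffices to prove $|B_\la(\omega,\varphi)|\le C\|\omega\|_\la\|\varphi\|_\la$ with $C$ independent of $\va$ (and, for $\la\ge\la_0$, of $\la$). From the definition of $L_\la$, the form splits into four pieces: the principal part $\int_{\r^3}(\na\omega\na\varphi+\la\omega\varphi)\dx=\langle\omega,\varphi\rangle_\la$, bounded by $\|\omega\|_\la\|\varphi\|_\la$ via Cauchy--Schwarz; the local term $(p_\va-1)\int_{\r^3}V(x)U_{x_{\la},p_\va}^{p_\va-2}\omega\varphi\dx$; and the two nonlocal terms $\int_{\r^3}(|x|^{-1}\ast U_{x_{\la},p_\va}^2)\omega\varphi\dx$ and $2\int_{\r^3}(|x|^{-1}\ast(U_{x_{\la},p_\va}\omega))U_{x_{\la},p_\va}\varphi\dx$. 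The task is thus to bound each of the last three by $C\|\omega\|_\la\|\varphi\|_\la$, uniformly in $\va$.

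For the local term I would apply H\"older with exponents $(\tfrac32,6,6)$ together with the Sobolev embedding $\|\cdot\|_{L^6}\le C\|\na\cdot\|_{L^2}\le C\|\cdot\|_\la$, reducing matters to $(p_\va-1)\big\|VU_{x_{\la},p_\va}^{p_\va-2}\big\|_{L^{3/2}}\le C$. The key is the concentration scaling $y=\sqrt\la(x-x_{\la})$: since $U_{x_{\la},p_\va}^{p_\va-2}=(\la/V_0)Q_{p_\va}^{p_\va-2}(\sqrt\la(\,\cdot\,-x_{\la}))$, the Jacobian $\la^{-3/2}$ cancels the factor $(\la/V_0)^{3/2}$, leaving
\[
\big\|VU_{x_{\la},p_\va}^{p_\va-2}\big\|_{L^{3/2}}^{3/2}=V_0^{-3/2}\int_{\r^3}V^{3/2}\big(x_{\la}+y/\sqrt\la\big)\,Q_{p_\va}^{3(p_\va-2)/2}(y)\dy,
\]
with no power of $\la$ left. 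Here $3(p_\va-2)/2=2\pm\tfrac{3\va}{2}$ is close to $2$, so the exponential decay of $Q_{p_\va}$, the local boundedness of $V$, and $x_{\la}\in B_\theta(b_0)$ make this finite; uniformity in $\va$ follows from the convergence $Q_{p_\va}\to Q_{\bar{p}}$ supplied by Lemma \ref{4-1}. Since $p_\va-1$ stays near $\tfrac73$, the local term is $\le C\|\omega\|_\la\|\varphi\|_\la$.

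The nonlocal terms are the main obstacle, since the convolution must be placed in an admissible Lebesgue space. For the first one I would use $\big\||x|^{-1}\ast U_{x_{\la},p_\va}^2\big\|_{L^\infty}\|\omega\|_{L^2}\|\varphi\|_{L^2}$ together with $\|\cdot\|_{L^2}\le\la^{-1/2}\|\cdot\|_\la$; the scaling gives $\big\||x|^{-1}\ast U_{x_{\la},p_\va}^2\big\|_{L^\infty}=V_0^{-2/(p_\va-2)}\la^{2/(p_\va-2)-1}\big\||y|^{-1}\ast Q_{p_\va}^2\big\|_{L^\infty}$, so the net power of $\la$ is $2/(p_\va-2)-2$, which is negative precisely because $p_\va>3$. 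For the second nonlocal term I would invoke the Hardy--Littlewood--Sobolev inequality recorded in the Appendix with the conjugate exponents $\tfrac65$ (so that $\tfrac56+\tfrac56+\tfrac13=2$), obtaining $\le C\|U_{x_{\la},p_\va}\omega\|_{L^{6/5}}\|U_{x_{\la},p_\va}\varphi\|_{L^{6/5}}$, then H\"older $\|U_{x_{\la},p_\va}\omega\|_{L^{6/5}}\le\|U_{x_{\la},p_\va}\|_{L^{3/2}}\|\omega\|_{L^6}$; the scaling yields $\|U_{x_{\la},p_\va}\|_{L^{3/2}}\sim\la^{1/(p_\va-2)-1}$, hence again a net factor $\la^{2/(p_\va-2)-2}\approx\la^{-1/2}$. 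In both cases the remaining $Q_{p_\va}$-integrals are uniformly bounded in $\va$ by Lemma \ref{4-1}.

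Collecting everything, $|B_\la(\omega,\varphi)|\le(1+C\la^{-1/2})\|\omega\|_\la\|\varphi\|_\la\le C\|\omega\|_\la\|\varphi\|_\la$ for $\la\ge\la_0$, with $C$ independent of $\va$, and the conclusion follows by the duality above. The difficulty lies entirely in the two nonlocal terms: one must choose the H\"older/HLS exponents so that the convolution lands in an admissible space \emph{and} so that, after the scaling $y=\sqrt\la(x-x_{\la})$, the resulting powers of $\la$ are nonpositive. It is exactly the proximity of $p_\va$ to the mass-critical value $\bar{p}=\tfrac{10}{3}>3$ that forces these powers to be negative, while Lemma \ref{4-1} provides the uniformity as $\va\to0$.
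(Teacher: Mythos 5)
Your proof is correct and follows essentially the same route as the paper's: reduce to bounding the bilinear form, split off the principal part, handle the local term by H\"older plus the scaling $y=\sqrt{\la}(x-x_{\la})$, bound the first nonlocal term via an $L^\infty$ estimate on the convolution, and apply Hardy--Littlewood--Sobolev with $L^{6/5}$ exponents to the second. The only differences are immaterial choices of H\"older exponents (e.g.\ $\|U_{x_{\la},p_\va}\|_{L^{3/2}}\|\omega\|_{L^{6}}$ in place of the paper's $\|U_{x_{\la},p_\va}\|_{L^{2}}\|\omega\|_{L^{3}}$), which produce the same final power $\la^{\frac{2}{p_\va-2}-2}$.
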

\begin{proof}
	It is obvious that $\langle L_{\la}u, v \rangle_{\la}$ is bi-linear for any $u,v\in H_\la$. Then it is sufficient to prove $L_\la$ is bounded.
	For any $\varphi\in H_{\la}$, 
	\begin{equation}\label{phi}\begin{split}
			\|\varphi\|_{L^{q}(\mathbb{R}^{3})} = & \la^{-\frac{3}{2q}}\biggl( \int_{\mathbb{R}^{3}} \Big|\varphi\Big(\frac{x}{\sqrt{\la}}\Big) \Big|^{q} \dx \biggr)^{\frac{1}{q}} \\
			\leq & C  \la^{-\frac{3}{2q}}\biggl(   \int_{\mathbb{R}^{3}} \bigg[ \Big|\nabla_{x}\Big[ \varphi\Big(\frac{x}{\sqrt{\la}}\Big)\Big] \Big|^{2} + \varphi^{2}\Big(\frac{x}{\sqrt{\la}}\Big)\bigg] \dx \biggr)^{\frac{1}{2}}\\
			\leq & C\la^{-\frac{3}{2q}+\frac14} \|\varphi\|_{\la}.
		\end{split}
	\end{equation}
 	Let $\langle L_{1,\la}u,v\rangle_\la=\int_{\mathbb{R}^3}(\nabla u\nabla v+\la uv)\dx-(p_\va-1)\int_{\mathbb{R}^3}V(x)U_{x_{\la},p_\va} ^{p_\va-2}uv\dx$ and $\langle L_{2,\la}u,v\rangle_\la=\langle L_{\la}u,v\rangle_\la-\langle L_{1,\la}u,v\rangle_\la.$
	For any $u,v\in H_{\la}$, by \eqref{phi} we have
	\begin{equation}\nonumber
		\begin{aligned}
			|\langle L_{1,\la}u,v\rangle_\la|
			&\leq|\langle u,v\rangle_{\la}|+C\int_{\mathbb{R}^3}U_{x_{\la},p_\va} ^{p_\va-2}|u||v|\dx\\
			&\leq||u||_{\la}||v||_{\la}+C\Big(\int_{\mathbb{R}^3}U_{x_{\la},p_\va} ^{p_\va}\dx\Big)^{\frac{p_\va-2}{p_\va}}
			\Big(\int_{\mathbb{R}^3}|u|^{p_\va}\dx\Big)^{\frac{1}{p_\va}}\Big(\int_{\mathbb{R}^3}|v|^{p_\va}\dx\Big)^{\frac{1}{p_\va}}\\
			&\leq C||u||_{\la}||v||_{\la}.
		\end{aligned}
	\end{equation}
	To estimate $|\langle L_{2,\la}u,v\rangle_\la|$, there hold
	\begin{equation*}
		\begin{aligned}
			|x|^{-1}\ast U_{x_{\la},p_\va} ^2 
			\leq C \la^{\frac{2}{p_\va-2}} \int_{\r^3} \frac{1}{|x-y|}Q_{p_\va}^2(\sqrt{\la}(y-x_{\la})) \dy
			\leq C \la^{\frac{2}{p_\va-2}-1},
		\end{aligned} 
	\end{equation*}
	\begin{equation*}
		\begin{aligned}
			 \int_{\mathbb{R}^{3}} 
			\big(|x|^{-1}\ast U_{x_{\la},p_\va} ^2\big) u v \dx
			\leq C \la^{\frac{2}{p_\va-2}-1} \int_{\r^3}uv \dx
			\leq C \la^{\frac{2}{p_\va-2}-2} ||u||_{\la}||v||_{\la},
		\end{aligned} 
	\end{equation*}
	\begin{equation*}
		\begin{aligned}
			\int_{\mathbb{R}^{3}} 
			 \big(|x|^{-1}\ast (U_{x_{\la},p_\va} u)\big)  U_{x_{\la},p_\va}  v \dx
			={}& \int_{\r^3}\int_{\r^3}\frac{1}{|x-y|}U_{x_{\la},p_\va} (x)u(x)U_{x_{\la},p_\va} (y)v(y) \dx\dy  \\
			\leq {}& C \|U_{x_{\la},p_\va} u\|_{L^{\frac65}(\r^3)}  \|U_{x_{\la},p_\va} v\|_{L^{\frac65}(\r^3)}  \\
			\leq {}& C \|U_{x_{\la},p_\va} \|_{L^{2}(\r^3)}^2 \|u\|_{L^{3}(\r^3)} \|v\|_{L^{3}(\r^3)} \\
			\leq{}& C \la^{\frac{2}{p_\va-2}-2} ||u||_{\la}||v||_{\la},
		\end{aligned} 
	\end{equation*}
thanks to the Hardy-Littlewood-Sobolev inequality \eqref{HLS}. So we have 
\[|\langle L_{2,\la}u,v\rangle_\la| \leq C \la^{\frac{2}{p_\va-2}-2} ||u||_{\la}||v||_{\la}
\leq C||u||_{\la}||v||_{\la}.\]
Hence, $L_\la$ is bounded.
\end{proof}

To solve equation \eqref{2-1-0}, the key is to prove $L_\lambda$ is invertible in $E_{\lambda ,p_\va}$.

\begin{lemma}\label{L-1} There exist $\la_{0}>0$ and $\va_0>0$    such that for any $\la > \la_{0}$, $x_{\la}\in B_\theta(b_0)$,  and $\va\in (0, \va_0)$,
it holds
\begin{equation}\label{2-7}
\|L_
 \lambda\xi\|_{\la} \geq \rho \|\xi\|_{\la}, \quad
 \text{ for  any } \xi \in E_{\lambda ,p_\va},
 \end{equation}
where $\rho> 0$ is a constant independent of $\la$ and $\va$.
\end{lemma}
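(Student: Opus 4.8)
The plan is to prove the invertibility estimate \eqref{2-7} by a standard contradiction-plus-blow-up argument, being careful that every quantitative estimate is uniform in both $\la$ and $\va$. Suppose \eqref{2-7} fails. Then there exist sequences $\la_n \to +\infty$ (or staying above some threshold), $\va_n \in (0,\va_0)$, points $x_{\la_n} \in B_\theta(b_0)$, and functions $\xi_n \in E_{\la_n, p_{\va_n}}$ with $\|\xi_n\|_{\la_n} = 1$ such that $\|L_{\la_n}\xi_n\|_{\la_n} \to 0$. The idea is to rescale so that the bubble $U_{x_{\la_n}, p_{\va_n}}$ becomes a fixed profile: set $\tilde\xi_n(x) = \la_n^{-\frac{1}{p_{\va_n}-2}} \xi_n\big(x_{\la_n} + x/\sqrt{\la_n}\big)$ (up to the appropriate normalizing power of $\la_n$ dictated by the definition of $U_{x_{\la_n},p_{\va_n}}$), so that in the rescaled variables the leading part of $L_{\la_n}$ becomes the operator $-\Delta + 1 - (p-1)Q_{p}^{p-2}$ associated with the limit equation \eqref{1-31-5}.

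First I would test the identity $\langle L_{\la_n}\xi_n, \varphi\rangle_{\la_n} = o(1)\|\varphi\|_{\la_n}$ against well-chosen test functions and rescale each term. The first two terms reproduce, after rescaling, the quadratic form of the linearized operator $\mathcal{L}_p := -\Delta + 1 - (p-1)Q_p^{p-2}$ acting on $\tilde\xi_n$. The remaining (nonlocal) terms must be shown to be negligible: from the estimates already obtained in Lemma~\ref{lemma1}, the three Poisson contributions to $L_{2,\la}$ are each $O\big(\la^{\frac{2}{p_\va-2}-2}\big)\|u\|_\la\|v\|_\la$, which after comparison with the natural scaling $\la^{\frac{2}{p_\va-2}-\frac12}$ of the problem shows they vanish as $\la_n \to +\infty$ relative to the dominant term. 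I would also invoke Lemma~\ref{4-1} to control $\|Q_{p_{\va_n}} - Q_{\bar p}\|_{H^1}$, ensuring that the rescaled coefficient $Q_{p_{\va_n}}^{p_{\va_n}-2}$ converges to $Q_{\bar p}^{\bar p - 2}$ and that the limit operator is exactly $\mathcal{L}_{\bar p}$; this uniformity in $\va$ is precisely what the lemma is designed to supply. After passing to a subsequence, $\tilde\xi_n \rightharpoonup \tilde\xi_0$ weakly in $H^1(\r^3)$, and the limit $\tilde\xi_0$ satisfies $\mathcal{L}_{\bar p}\tilde\xi_0 = 0$.

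Next I would translate the orthogonality conditions. The constraint $\xi_n \in E_{\la_n, p_{\va_n}}$, i.e. $\langle \xi_n, \partial U_{x_{\la_n},p_{\va_n}}/\partial x_j\rangle_{\la_n} = 0$ for $j=1,2,3$, becomes in the rescaled variables the orthogonality of $\tilde\xi_0$ to $\partial Q_{\bar p}/\partial x_j$ in the appropriate inner product. By the nondegeneracy of $Q_{\bar p}$ (the kernel of $\mathcal{L}_{\bar p}$ in $H^1(\r^3)$ is spanned exactly by $\partial_{x_j} Q_{\bar p}$, $j=1,2,3$), these orthogonality conditions force $\tilde\xi_0 = 0$. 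It then remains to upgrade the weak-zero limit to a contradiction with $\|\xi_n\|_{\la_n}=1$: I would show that the $H^1$-norm does not escape to the operator's ``null'' directions, namely that $\tilde\xi_n \to 0$ strongly, by testing $L_{\la_n}\xi_n$ against $\xi_n$ itself. Writing out $\langle L_{\la_n}\xi_n, \xi_n\rangle_{\la_n}$, the leading piece is $\|\xi_n\|_{\la_n}^2 - (p-1)\int V U^{p-2}\xi_n^2$, and since the potential term localizes on the bubble (which in rescaled coordinates is the fixed $Q_{\bar p}$ profile) while $\tilde\xi_n \rightharpoonup 0$, the negative contribution vanishes; combined with $\langle L_{\la_n}\xi_n,\xi_n\rangle_{\la_n}=o(1)$ this yields $\|\xi_n\|_{\la_n} \to 0$, contradicting the normalization.

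The main obstacle I anticipate is the uniformity in $\va$, not the blow-up itself, which is by now routine for a fixed exponent. Specifically, I must ensure that the rescaled nonlinear coefficient $Q_{p_{\va_n}}^{p_{\va_n}-2}$ converges \emph{in a strong enough topology} (e.g. in $L^{3/2}_{\mathrm{loc}}$ against $H^1$ test functions, with uniform exponential decay at infinity) that the limiting operator is genuinely $\mathcal{L}_{\bar p}$; here the quantitative control $\|Q_{p_{\va}}-Q_{\bar p}\|_{H^1} = o_\va(1)$ from Lemma~\ref{4-1}, together with uniform decay estimates from Appendix~A, is essential. A secondary technical point is verifying that the Poisson terms remain subordinate uniformly in $\va$: since the power $\frac{2}{p_\va - 2} - 2$ varies with $\va$ but stays bounded away from zero for $\va$ small (as $p_\va \to \frac{10}{3}$ gives $\frac{2}{p_\va-2} \to \frac{3}{2}$, so the exponent tends to $-\frac12 < 0$), one confirms these terms vanish as $\la_n \to +\infty$ uniformly for $\va \in (0,\va_0)$. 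Provided these uniformities hold, the constant $\rho$ produced by the contradiction argument is manifestly independent of $\la$ and $\va$.
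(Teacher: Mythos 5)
Your proposal is correct and follows essentially the same route as the paper: a contradiction/blow-up argument in which the Poisson contributions are disposed of via the bounds from Lemma~\ref{lemma1}, the rescaled sequence converges weakly to a kernel element of $-\Delta+1-(\bar p-1)Q_{\bar p}^{\bar p-2}$ (with Lemma~\ref{4-1} supplying the uniformity in $\va$), the orthogonality conditions kill the kernel directions, and testing against $\xi_n$ itself then contradicts the normalization. The only cosmetic difference is the normalization of $\xi_n$ (the paper takes $\|\xi_n\|_{\la_n}=\la_n^{-1/4}$ so that the rescaled functions are bounded in $H^1$ without an extra amplitude factor), which does not affect the substance of the argument.
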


\begin{proof}
	We argue by contradiction. Suppose that there are $\va_n\to 0$,
$\la_{n} \to \infty$, $x_{\la, n}\in B_\theta(b_0)$,
   $\xi_{n} \in E_{\la_{n},p_{\va_n}}$ and $\|\xi_{n}\|_{\la_{n}}=\lambda_n^{-\frac 14}$, such that
\begin{equation}\label{20-19-5}	
\|L_{\la_{n}} \xi_{n}\|_{\la_{n}} = o_{n}(1) \lambda_n^{-\frac 14}.
\end{equation}
By the proof of Lemma \ref{lemma1}, we have
\[
|\langle L_{2,\la_n}\xi,\varphi\rangle_\la| \leq o_n(1) ||\xi_n||_{\la}||\varphi||_{\la},
\]
for any $\varphi \in E_{\la, p_\va}$. Thus \eqref{20-19-5} is equivalent to 
\begin{equation}\label{20-19-6}	
	\|L_{1,\la_{n}} \xi_{n}\|_{\la_{n}} = o_{n}(1) \lambda_n^{-\frac 14}.
\end{equation}
It is a bit standard to obtain a contradiction from \eqref{20-19-6}.
So we just sketch the proof. 	For simplicity, we drop the subscript $n$.

	For any $\varphi \in E_{\la, p_\va}$, we have
	\begin{equation}\label{2-1-1}
		o(1)\|\varphi\|_{\la}\lambda^{-\frac 14}= \int_{\mathbb{R}^{3}}\Bigl(\nabla \xi \nabla \varphi + \la \xi \varphi - (p_{\va}-1)V(x) U_{x_{\la}, p_{\va}}^{p_{\va}-2} \xi \varphi  \Bigr) \dx.
	\end{equation}
	Letting $\varphi = \xi$ in \eqref{2-1-1},  we are led to
	\begin{equation}\label{2-1-3}
		\lambda^{ -\frac 12}  -  (p_{\va} -1)\int_{\mathbb{R}^{3}}V(x)\Big[ \Big(\frac{\la}{V_0}\Big)^{\frac{1}{p_{\va}-2}}Q_{p_\va}\big(\sqrt{\la} (x -x_{\la})\big)\Big]^{p_{\va}-2}\xi^{2}  \dx = o(1)\lambda^{ -\frac 12}.
	\end{equation}

	From \eqref{2-1-3} and Lemma~\ref{4-1}, we will obtain the contradiction, if we can prove
	\begin{equation}\label{2-1-4}
		\lambda \int_{B_{R/\sqrt{\la}}(x_{\la})} \xi^{2} \dx = o(\lambda^{-\frac 12}).
	\end{equation}
	To prove \eqref{2-1-4}, we  define
	\[\bar\xi_{\la}(x) = \xi \Big(\frac{x}{\sqrt{\la}}+x_{\la} \Big). \]
	Then
\[
\int_{\mathbb R^3} \bigl( |\nabla \bar\xi_{\la}|^2 + \bar\xi_{\la}^2\bigr) \dx \le
C.
\]
Thus,
there exists $\bar \xi \in H^{1}(\mathbb{R}^{3})$, such that, as $\la \to \infty$,
	\[ \bar \xi_{\la}\rightharpoonup \bar\xi \text{ weakly in } H^{1}(\mathbb{R}^{3}), \]
	and
	\[\bar \xi_{\la} \to \bar \xi \text{ strongly in } L^{2}_{loc}(\mathbb{R}^{3}). \]
	It is easy to see that  \eqref{2-1-4} is equivalent to $\bar \xi = 0 $.
	
	Using Lemma~\ref{4-1}, we can prove  that $\bar \xi$ satisfies the following equation
	\begin{equation*}
		-\Delta \bar \xi + \bar \xi -  (\bar {p} -1) Q_{\bar{p}}^{\bar {p}-2} \bar\xi =0.
	\end{equation*}
	Hence, by the non-degeneracy of the solution $Q_{\bar{p}}$,
	\[\bar \xi = \sum_{j=1}^{3} d_{j} \frac{\partial Q_{\bar{p}}}{\partial x_{j}}.\]
	
	On the other hand, from  $\xi \in E_{\la, p_{\va}}$,  we obtain
	\begin{equation}\label{2-1-23}\begin{split}
			0 = &  \langle \xi, \frac{\partial U_{x_{\la}, p_{\va}}}{\partial x_{j}} \rangle_{\la}
			= (  p_{\va} -1)  \Big(\frac{\la}{V_0}\Big)^{\frac{1}{p_{\va}-2}} \biggl[ \int_{\mathbb{R}^{3}} Q^{p_{\va}-2}_{p_{\va}}(x) \frac{\partial Q_{p_\va}( x )}{\partial x_{j}} \bar \xi_{\la}(x) \dx +o(1)\biggr].
		\end{split}
	\end{equation}
	This gives
	\begin{equation}\label{2-1-24}
		\int_{\mathbb{R}^{3}} Q^{p_{\va}-2}_{p_{\va}}(x) \frac{\partial Q_{p_{\va}}( x )}{\partial x_{j}} \bar \xi_{\la}(x) \dx = o(1),
	\end{equation}
	  which together with Lemma~\ref{4-1} implies that
	\begin{equation}\label{2-1-25}
		\int_{\mathbb{R}^{3}} Q_{\bar{p}}^{\bar{p}-2}(x) \frac{\partial Q_{\bar{p}}( x )}{\partial x_{j}} \bar \xi(x) \dx = 0.
	\end{equation}
	Thus, $d_{j} = 0$ for $j=1,2, 3$.
	The proof is completed.
\end{proof}

From now on, we always assume that $\va\in (0, \va_0)$. Using Lemma~\ref{4-1}, we can prove the following
 lemmas which give the estimates of $l_{\la}$ and $R_{\la}(\omega)$.

\begin{lemma}\label{2-2}  There is a constant $C>0$, independent of $\va\in (0, \va_0)$, such that
\[\|l_{\la}\|_{\la} \leq  C  \Bigl( | x_{\la}-b_0|^2 \la^{\frac{1}{p_{\va}-2} -\frac {1}{4}}+ | \nabla V(x_{\la})| \la^{\frac{1}{p_{\va}-2} -\frac {3}{4}}  +   \la^{\frac{3}{p_\va-2}-\frac94}\Bigr) .\]
\end{lemma}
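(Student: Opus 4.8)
The plan is to estimate $\|l_\la\|_\la$ through its dual characterization $\|l_\la\|_\la = \sup\{\langle l_\la,\varphi\rangle_\la : \varphi\in E_{\la,p_\va},\ \|\varphi\|_\la = 1\}$, bounding the two summands in the definition of $\langle l_\la,\varphi\rangle_\la$ separately. Throughout I would pass to the rescaled variable $y = \sqrt\la(x-x_\la)$, under which $U_{x_{\la},p_\va}(x) = (\la/V_0)^{1/(p_\va-2)}Q_{p_\va}(y)$, and control the test function via the scaling inequality \eqref{phi}: after substitution $\varphi(\cdot/\sqrt\la+x_\la)$ obeys $\|\varphi(\cdot/\sqrt\la+x_\la)\|_{L^q(\r^3)}\le C\la^{1/4}\|\varphi\|_\la$ for each fixed $q$, by combining the change of variables (which contributes $\la^{3/(2q)}$) with \eqref{phi} (which contributes $\la^{-3/(2q)+1/4}$), so that the $q$-dependence cancels. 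Since $Q_{p_\va}$, and any fixed polynomial weight times a power of it, lies in every $L^{q'}$ with norm bounded uniformly in $\va$ by the exponential decay encoded in Lemma \ref{4-1}, each integral reduces to a product of a uniformly bounded constant, an explicit power of $\la$, and $\|\varphi\|_\la$.

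For the potential summand $\int_{\r^3}[V(x)-V_0]U_{x_{\la},p_\va}^{p_\va-1}\varphi\dx$, I would Taylor expand $V$ about $x_\la$, writing $V(x)-V_0 = [V(x_\la)-V_0] + \nabla V(x_\la)\cdot(x-x_\la) + O(|x-x_\la|^2)$. Because $b_0$ is a critical point with $V(b_0)=V_0$, one has $V(x_\la)-V_0 = O(|x_\la-b_0|^2)$. After rescaling and accounting for the prefactor $(\la/V_0)^{(p_\va-1)/(p_\va-2)}\la^{-3/2}$, the constant piece contributes the power $\la^{\frac{1}{p_\va-2}-\frac14}$ weighted by $|x_\la-b_0|^2$; the gradient piece carries an extra factor $|x-x_\la| = |y|/\sqrt\la$, hence the power $\la^{\frac{1}{p_\va-2}-\frac34}$ weighted by $|\nabla V(x_\la)|$; and the quadratic remainder, carrying $|y|^2/\la$, yields $\la^{\frac{1}{p_\va-2}-\frac54}$. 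Since $p_\va-2$ stays bounded away from $0$ and $(4-p_\va)/(p_\va-2)>0$, the power $\frac{1}{p_\va-2}-\frac54$ is strictly smaller than $\frac{3}{p_\va-2}-\frac94$, so this last remainder is absorbed into the Poisson power below. These give exactly the first two displayed terms.

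For the Poisson summand $\int_{\r^3}(|x|^{-1}\ast U_{x_{\la},p_\va}^2)U_{x_{\la},p_\va}\varphi\dx$, I would reuse the pointwise bound $|x|^{-1}\ast U_{x_{\la},p_\va}^2\le C\la^{\frac{2}{p_\va-2}-1}$ already established in the proof of Lemma \ref{lemma1}; equivalently, after rescaling this convolution equals $(\la/V_0)^{2/(p_\va-2)}\la^{-1}(|\cdot|^{-1}\ast Q_{p_\va}^2)(y)$, with the convolution bounded uniformly in $\va$. Multiplying by $U_{x_{\la},p_\va}=(\la/V_0)^{1/(p_\va-2)}Q_{p_\va}(y)$, changing variables, and applying H\"older (or Hardy--Littlewood--Sobolev \eqref{HLS}) together with the scaling bound on $\varphi$, the prefactor $\la^{\frac{3}{p_\va-2}-\frac52}$ times $\la^{1/4}$ produces the third term $\la^{\frac{3}{p_\va-2}-\frac94}$.

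The main obstacle is not any single estimate but the requirement that every constant be independent of $\va\in(0,\va_0)$. This hinges on Lemma \ref{4-1}: the $\va$-uniform convergence $Q_{p_\va}\to Q_{\bar p}$ in $H^1(\r^3)$, and the attendant uniform exponential decay, is what guarantees that norms such as $\||y|\,Q_{p_\va}^{p_\va-1}\|_{L^{q'}}$ and $\|(|\cdot|^{-1}\ast Q_{p_\va}^2)Q_{p_\va}\|_{L^{q'}}$ are bounded uniformly, and that the exponents $\frac{1}{p_\va-2}$, $\frac{p_\va-1}{p_\va-2}$, $\frac{3}{p_\va-2}$ remain controlled since $p_\va-2\to\frac43$. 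Verifying this uniformity is the delicate point that distinguishes the present argument from the classical fixed-$p$ reduction.
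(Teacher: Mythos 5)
Your proposal follows essentially the same route as the paper: split $\langle l_\la,\varphi\rangle_\la$ into the potential and Poisson contributions, Taylor-expand $V$ about $x_\la$ using $V(x_\la)-V_0=O(|x_\la-b_0|^2)$, rescale via \eqref{phi}, bound the Poisson term by the pointwise convolution estimate or Hardy--Littlewood--Sobolev \eqref{HLS}, and absorb the $\la^{\frac{1}{p_\va-2}-\frac54}$ remainder into $\la^{\frac{3}{p_\va-2}-\frac94}$ exactly as you note. The only step you omit is the paper's preliminary localization to $B_\delta(x_\la)$ (with an exponentially small exterior contribution), which is what actually licenses the uniform $O(|x-x_\la|^2)$ Taylor remainder, since assumption (V) does not provide a globally bounded Hessian for $V$.
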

\begin{proof}
	Recall that, for any $\varphi \in H^{1}(\mathbb{R}^{3})$,
	\[
	\aligned\langle l_{\la}, \varphi\rangle_{\la}
	=& \Big\{\int_{\mathbb{R}^{3} \setminus B_{\delta}(x_{\la}) }+ \int_{B_{\delta}(x_{\la})}\Big\} (V(x)-V_0) U_{x_{\la}, p_{\va}}^{p_\va-1} \varphi \dx
	- \int_{\r^3}\big(|x|^{-1}\ast U_{x_{\la},p_\va} ^2\big)U_{x_{\la},p_\va} \varphi \dx.
	\endaligned\]
	It is easy to get
	\begin{equation}\label{2-2-01}
		\Big|\int_{\mathbb{R}^{3} \setminus B_{\delta}(x_{\la}) }(V(x)-V_0) U_{x_{\la}, p_{\va}}^{p_\va-1} \varphi  \dx \Big|
		\leq C e^{-\tau\sqrt{\la}} \|\varphi\|_\la, 
	\end{equation}
for some $\tau>0$ small.
	By H\"older inequality,  one has
	\begin{equation}\label{2-2-1}\begin{split}
			& \Big|\int_{B_{\delta}(x_{\la})}( V(x)-V_0) U_{x_{\la}, p_{\va}}^{p_\va-1}  \varphi \dx \Big|  \\
			= {}& \Big|\int_{B_{\delta}(x_{\la})} \bigl((V(x_{\la})-V_0) + \nabla V(x_{\la}) \cdot (x-x_{\la}) + O(|x-x_{\la}|^{2}) \bigr) U_{x_{\la}, p_{\va}}^{p_\va-1}  \varphi  \dx \Big| \\
			\leq {}& C|V(x_{\la})-V_{0}| \Bigl( \int_{\mathbb{R}^{3}} U_{x_{\la}, p_{\va}}^{p_\va} \dx\Bigr)^{\frac{p_\va-1}{p_\va}} \cdot   \Big( \int_{\mathbb{R}^{3}} |\varphi|^{p_\va} \dx\Big)^{\frac{1}{p_\va}} + C |\nabla V(x_{\la})| \la^{\frac{1}{p_{\va}-2} -\frac 34} \|\varphi\|_{\la}+ O\Bigl(  \la^{\frac{1}{p_{\va}-2} -\frac54} \Bigr) \|\varphi\|_{\la} \\
			\leq {}&C  \Bigl( |x_{\la}-b_0|^{2} \la^{\frac{1}{p_{\va}-2} -\frac {1}{4}} +|\nabla V(x_{\la})| \la^{\frac{1}{p_{\va}-2} -\frac 34}  +  \la^{\frac{1}{p_{\va}-2} -\frac54}\Bigr)  \|\varphi\|_{\la} .
		\end{split}
	\end{equation}
	Moreover, by the Hardy-Littlewood-Sobolev inequality \eqref{HLS}, we have
	\begin{equation}\label{eq2.26}
		\aligned
		\Big|\int_{\r^3}\big(|x|^{-1}\ast U_{x_{\la},p_\va} ^2\big)U_{x_{\la},p_\va} \varphi \dx \Big|
		={}& \int_{\r^3}\int_{\r^3} \frac{1}{|x-y|}U_{x_{\la},p_\va} (x)\varphi(x)U_{x_{\la},p_\va} ^2(y) \dx\dy \\
		\leq{}& C \|U_{x_{\la},p_\va} \|_{L^{\frac{12}5}(\r^3)}^2 \|U_{x_{\la},p_\va} \varphi\|_{L^{\frac65}(\r^3)}\\
		\leq{}& C \la^{\frac{2}{p_\va-2}-\frac54} \|U_{x_{\la},p_\va} \|_{L^{2}(\r^3)} \|\varphi\|_{L^{3}(\r^3)} \\
		\leq{}& C \la^{\frac{3}{p_\va-2}-\frac94} \|\varphi\|_\la.
		\endaligned
	\end{equation}
The conclusion of the lemma follows from \eqref{2-2-1} and \eqref{eq2.26}.
\end{proof}

\begin{lemma}\label{2-3} There is a constant $C>0$, independent of $\va\in (0, \va_0)$, such that
\[\|R_{\la}(\omega)\|_{\la} \leq   C\biggl( \frac{\|\omega\|^{2}_{\la} }{\la^{\frac{1}{p_{\va} -2}-\frac{1}{4}}}  + \frac{\|\omega\|_{\la}^{p_{\va}-1}}{\la^{\frac{3}{2}-\frac{p_{\va}}{4}}}  +\frac{\|\omega\|_{\la}^{3}}{\la^{\frac{3}{2}}}  \biggr) . \]
\end{lemma}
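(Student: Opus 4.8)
The plan is to estimate $\|R_\la(\omega)\|_\la = \sup_{\|\varphi\|_\la \le 1} \langle R_\la(\omega), \varphi\rangle_\la$ by bounding the three terms in the definition of $\langle R_\la(\omega),\varphi\rangle_\la$ separately, using throughout the rescaled estimate \eqref{phi}, namely $\|\varphi\|_{L^q(\r^3)} \le C\la^{-\frac{3}{2q}+\frac14}\|\varphi\|_\la$, together with the explicit scaling $U_{x_\la,p_\va}(x) = (\la/V_0)^{\frac{1}{p_\va-2}}Q_{p_\va}(\sqrt\la(x-x_\la))$ and the decay/integrability estimates for $Q_{p_\va}$ from Lemma \ref{4-1}. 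For the first (local, nonlinear) term I would use the pointwise elementary inequality
\[
\bigl|(U+\omega)^{p_\va-1} - U^{p_\va-1} - (p_\va-1)U^{p_\va-2}\omega\bigr| \le C\bigl(U^{p_\va-3}\omega^2 + |\omega|^{p_\va-1}\bigr),
\]
valid for the relevant range of $p_\va$ near $\bar p = \tfrac{10}{3}$ (so $p_\va-1 \in (2,3)$, which governs which branch of the inequality dominates), then apply Hölder with the rescaling \eqref{phi} to convert the two resulting integrals into $\|\omega\|_\la^2 \la^{-(\frac{1}{p_\va-2}-\frac14)}\|\varphi\|_\la$ and $\|\omega\|_\la^{p_\va-1}\la^{-(\frac32-\frac{p_\va}{4})}\|\varphi\|_\la$ respectively; tracking the powers of $\la$ that come from $U^{p_\va-2}$, $U^{p_\va-3}$ and the change of variables is exactly what produces the stated exponents.

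For the two nonlocal terms I would invoke the Hardy--Littlewood--Sobolev inequality \eqref{HLS}, exactly as in the proofs of Lemma \ref{lemma1} and Lemma \ref{2-2}. For $\int_{\r^3}(|x|^{-1}\ast(U_{x_\la,p_\va}\omega))\omega\varphi\dx$ I would bound it by $C\|U_{x_\la,p_\va}\omega\|_{L^{6/5}}\|\omega\varphi\|_{L^{6/5}}$ and then distribute the $L^{6/5}$ norms via Hölder so that $U_{x_\la,p_\va}$ is measured in $L^2$ (its natural scaling) and $\omega,\varphi$ in $L^3$ (controlled by \eqref{phi}); this yields the $\|\omega\|_\la^2 \la^{\frac{1}{p_\va-2}-\frac94}\|\varphi\|_\la$-type contribution, which is absorbed into the first listed term since $\la^{\frac{1}{p_\va-2}-\frac94} \le \la^{-(\frac{1}{p_\va-2}-\frac14)}$ for $\la$ large. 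For the purely cubic nonlocal term $\int_{\r^3}(|x|^{-1}\ast\omega^2)\omega\varphi\dx$ I would bound it by $C\|\omega^2\|_{L^{6/5}}\|\omega\varphi\|_{L^{6/5}} \le C\|\omega\|_{L^3}^3\|\varphi\|_{L^3}$ and again use \eqref{phi}, producing the clean $\|\omega\|_\la^3\la^{-3/2}\|\varphi\|_\la$ term.

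The main obstacle I anticipate is the \emph{uniformity in} $\va$ of all the constants: because $p_\va \to \bar p$, the various Sobolev/Hölder constants and the $L^q$-integrability constants for $Q_{p_\va}$ must be controlled independently of $\va \in (0,\va_0)$, and this is precisely where Lemma \ref{4-1} (the estimate of $\|Q_{p_\va} - Q_{\bar p}\|_{H^1}$ and the associated uniform decay) is essential --- it guarantees that the norms $\|Q_{p_\va}\|_{L^q}$ appearing after rescaling are bounded by constants depending only on a neighborhood of $\bar p$. A secondary technical point is ensuring the pointwise nonlinear inequality above holds with a $\va$-independent constant across the whole range of exponents near $\bar p$; since $p_\va - 1$ stays in a fixed compact subinterval of $(2,3)$, this is routine but should be stated explicitly. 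Once all six resulting terms are collected and the lower-order ones (the nonlocal contributions and $\la^{-5/4}$-type remainders) are absorbed into the three displayed terms for $\la \ge \la_0$, the claimed bound follows.
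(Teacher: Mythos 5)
Your decomposition and toolkit match the paper's proof exactly: split $R_\la(\omega)$ into the local Taylor-remainder part and the two nonlocal parts, handle the former with the pointwise inequality plus H\"older and the rescaling \eqref{phi}, handle the latter with Hardy--Littlewood--Sobolev, and absorb the quadratic nonlocal contribution into the first displayed term; the uniformity in $\va$ via Lemma \ref{4-1} is also exactly the paper's point. The local term and the absorption argument are fine as you describe them.

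The one concrete problem is in your H\"older pairings for the nonlocal terms. After HLS you must split $L^{6/5}$ norms of products, which requires exponents with $\tfrac{1}{q_1}+\tfrac{1}{q_2}=\tfrac56$; measuring both factors in $L^3$ gives $\tfrac13+\tfrac13=\tfrac23\neq\tfrac56$, so the splittings $\|\omega\varphi\|_{L^{6/5}}\le\|\omega\|_{L^3}\|\varphi\|_{L^3}$ and $\|\omega^2\|_{L^{6/5}}\le\|\omega\|_{L^3}^2$ are not valid, and the intermediate powers of $\la$ you quote are off as a result (the mixed term actually comes out as $\la^{\frac{1}{p_\va-2}-\frac74}\|\omega\|_\la^2\|\varphi\|_\la$, not $\la^{\frac{1}{p_\va-2}-\frac94}$, which still absorbs into the first term). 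More importantly, for the purely cubic term your pairing, even if it were legitimate, would only yield $\la^{-1}\|\omega\|_\la^3\|\varphi\|_\la$ via \eqref{phi}, which is weaker than the $\la^{-3/2}$ the lemma asserts. The paper's choice is $\|\omega^2\|_{L^{6/5}}\le\|\omega\|_{L^{12/5}}^2$ and $\|\omega\varphi\|_{L^{6/5}}\le\|\omega\|_{L^{3/2}}\|\varphi\|_{L^6}$ (and, for the mixed term, $\|U_{x_\la,p_\va}\omega\|_{L^{6/5}}\le\|U_{x_\la,p_\va}\|_{L^2}\|\omega\|_{L^3}$ together with $\|\omega\varphi\|_{L^{6/5}}\le\|\omega\|_{L^{3/2}}\|\varphi\|_{L^6}$); with these exponents the rescaling \eqref{phi} delivers exactly $\la^{-3/2}\|\omega\|_\la^3\|\varphi\|_\la$. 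So the gap is a fixable bookkeeping error, not a flaw in strategy, but as written your cubic estimate does not reach the stated bound.
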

\begin{proof}
	Let $ \langle R_{1,\la}(\omega),\varphi\rangle_\la=\int_{\mathbb{R}^3}V(x)\big((U_{x_{\la},p_\va} +\omega)^{p_\va-1}-U_{x_{\la},p_\va} ^{p_\va-1}-
	(p_\va-1)U_{x_{\la},p_\va} ^{p_\va-2}\omega\big)\varphi\dx$ and $\langle R_{2,\la}(\omega),\varphi\rangle_\la=\langle R_{\la}(\omega),\varphi\rangle_\la-\langle R_{1,\la}(\omega),\varphi\rangle_\la$, for any $\varphi \in E_{\la, p_\va}$.
	By H\"older inequality and \eqref{phi}, it is standard to show that
	\begin{equation}\label{R1}
		\|R_{1,\la}(\omega)\|_{\la} \leq   C\biggl( \frac{\|\omega\|^{2}_{\la} }{\la^{\frac{1}{p_{\va} -2}-\frac{1}{4}}}  + \frac{\|\omega\|_{\la}^{p_{\va}-1}}{\la^{\frac{3}{2}-\frac{p_{\va}}{4}}}  \biggr) .
	\end{equation}
For $R_{2,\la}(\omega)$, we have
	\begin{equation*}
		\langle R_{2,\la}(\omega),\varphi\rangle_\la = 2 \int_{\r^3}\big(|x|^{-1}\ast (U_{x_{\la},p_\va} \omega)\big) \omega \varphi \dx
		+ \int_{\r^3}\big(|x|^{-1}\ast \omega^2\big) \omega \varphi \dx
		, \quad \forall \varphi \in E_{\lambda ,p_\va}.
	\end{equation*}
	By the Hardy-Littlewood-Sobolev inequality \eqref{HLS}, there hold
	\begin{equation*}
		\aligned
		\Big|\int_{\r^3}\big(|x|^{-1}\ast  (U_{x_{\la},p_\va} \omega)\big)\omega\varphi \dx\Big|
		=& \int_{\r^3}\int_{\r^3} \frac{1}{|x-y|}\omega(x)\varphi(x)U_{x_{\la},p_\va} (y)\omega(y) \dx\dy \\
		\leq& C  \|U_{x_{\la},p_\va} \|_{L^{2}(\r^3)} \|\omega\|_{L^{3}(\r^3)} \|\omega\|_{L^{\frac32}(\r^3)} \|\varphi\|_{L^{6}(\r^3)} \\
		\leq& C \la^{\frac{1}{p_\va-2}-\frac74} \|\omega\|_\la^2\|\varphi\|_\la ,
		\endaligned
	\end{equation*}
	and
	\begin{equation*}
		\aligned
		\Big|\int_{\r^3}\big(|x|^{-1}\ast \omega^2\big)\omega\varphi \dx\Big|
		=& \int_{\r^3}\int_{\r^3} \frac{1}{|x-y|}\omega(x)\varphi(x)\omega^2(y) \dx\dy \\
		\leq& C \|\omega\|_{L^{\frac{12}5}(\r^3)}^2 \|\omega\|_{L^{\frac32}(\r^3)} \|\varphi\|_{L^{6}(\r^3)} \\
		\leq& C \la^{-\frac32} \|\omega\|_\la^3 \|\varphi\|_\la .
		\endaligned
	\end{equation*}
	Thus
	\begin{equation}\label{R2}
		\|R_{2,\la}(\omega)\|_{\la} \leq   C\biggl( \frac{\|\omega\|^{2}_{\la} }{\la^{-\frac{1}{p_{\va} -2}+\frac{7}{4}}}  + \frac{\|\omega\|_{\la}^{3}}{\la^{\frac{3}{2}}}  \biggr) .
	\end{equation}
			Hence, the result follows from \eqref{R1}-\eqref{R2}.
		\end{proof}

 Lemma \ref{L-1} indicates $L_\la$ is invertible in $E_{\la, p_\va}$. Lemma \ref{2-3} shows that problem \eqref{2-1-0} is a perturbation of linear problem $L_\la\omega=l_\la$. Therefore combining the above Lemmas \ref{L-1}, \ref{2-2}, \ref{2-3} and the contraction mapping theorem yields the following result.

\begin{proposition}\label{2-4} There exist constants $\va_0>0$ and  $\lambda_0>0$, such that for
any $\lambda>\lambda_0$ and $0<\va<\va_0$, there exists a $C^{1}$ function $\omega_{\la}$  from $B_\theta(b_0)$ to $ E_{\la, p_\va}$ satisfying
\begin{equation}\label{reduction}
  L_{\la} \omega_{\la} = l_{\la} + R_{\la}(\omega_{\la})\; \text{ in } E_{\la, p_\va}.
\end{equation}
Moreover,
\[\|\omega_{\la}\|_{\la}\leq C \Bigl( | x_{\la}-b_0|^2 \la^{\frac{1}{p_{\va}-2} -\frac {1}{4}}+ | \nabla V(x_{\la})| \la^{\frac{1}{p_{\va}-2} -\frac {3}{4}}  +   \la^{\frac{3}{p_\va-2}-\frac94}\Bigr) . \]
\end{proposition}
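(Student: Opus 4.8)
The plan is to prove Proposition~\ref{2-4} by a standard contraction-mapping argument, reformulating the reduced equation \eqref{2-1-0} as a fixed-point problem in $E_{\la,p_\va}$ and exploiting the three preceding lemmas in combination. The key observation is that Lemma~\ref{L-1} guarantees $L_\la$ is invertible on $E_{\la,p_\va}$ with inverse bounded by $\rho^{-1}$ uniformly in $\la>\la_0$ and $\va\in(0,\va_0)$. Hence \eqref{2-1-0} is equivalent to the fixed-point equation
\[
\omega = L_\la^{-1}\bigl(l_\la + R_\la(\omega)\bigr) =: \mathcal{T}_\la(\omega), \quad \omega \in E_{\la,p_\va}.
\]

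Let me write out the steps. First I would fix a closed ball
\[
\mathcal{B} := \Bigl\{\omega \in E_{\la,p_\va}: \|\omega\|_\la \le M\bigl(|x_\la-b_0|^2 \la^{\frac{1}{p_\va-2}-\frac14} + |\nabla V(x_\la)|\la^{\frac{1}{p_\va-2}-\frac34} + \la^{\frac{3}{p_\va-2}-\frac94}\bigr)\Bigr\}
\]
for a suitably large constant $M$, and show $\mathcal{T}_\la$ maps $\mathcal{B}$ into itself. By Lemma~\ref{L-1}, $\|\mathcal{T}_\la(\omega)\|_\la \le \rho^{-1}(\|l_\la\|_\la + \|R_\la(\omega)\|_\la)$. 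Lemma~\ref{2-2} controls $\|l_\la\|_\la$ by exactly the bound defining $\mathcal{B}$ (up to a constant), so the essential point is that the $R_\la$-contribution is of strictly smaller order. Substituting the bound $\|\omega\|_\la \le M(\cdots)$ into Lemma~\ref{2-3} and checking the exponents shows each term in $\|R_\la(\omega)\|_\la$ gains a negative power of $\la$ relative to the corresponding term in $\|l_\la\|_\la$; for $\la$ large this forces $\|R_\la(\omega)\|_\la = o(1)\|l_\la\|_\la$, so $\mathcal{T}_\la(\mathcal{B})\subseteq \mathcal{B}$ provided $M > 2C/\rho$, say. Second I would verify $\mathcal{T}_\la$ is a contraction: since $L_\la$ is linear, $\|\mathcal{T}_\la(\omega_1)-\mathcal{T}_\la(\omega_2)\|_\la \le \rho^{-1}\|R_\la(\omega_1)-R_\la(\omega_2)\|_\la$, and the same power-counting applied to the Lipschitz estimate for $R_\la$ (obtained by differentiating the defining expressions for $R_{1,\la}$ and $R_{2,\la}$, or by a mean-value argument on $\mathcal{B}$) yields a contraction constant $<1$ for $\la>\la_0$ large enough. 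The Banach fixed-point theorem then produces a unique $\omega_\la\in\mathcal{B}$ solving \eqref{reduction}, and its norm obeys the asserted bound by construction.

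**The $C^1$-dependence** of $\omega_\la$ on $x_\la\in B_\theta(b_0)$ follows from the implicit function theorem applied to the map $G(x_\la,\omega) := L_\la\omega - l_\la - R_\la(\omega)$, once one notes that $U_{x_\la,p_\va}$, and hence $l_\la$, $R_\la$, and the projection onto $E_{\la,p_\va}$, all depend smoothly on $x_\la$; the invertibility of $\partial_\omega G = L_\la - DR_\la(\omega_\la)$ is again secured by Lemma~\ref{L-1} together with the smallness of $DR_\la$ on $\mathcal{B}$.

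**The main obstacle** I anticipate is not the abstract scheme but the uniformity in $\va$: every constant and every power of $\la$ must be controlled independently of $\va\in(0,\va_0)$, since $p_\va\to\bar p=\tfrac{10}{3}$ and the exponents $\tfrac{1}{p_\va-2}$, $\tfrac{3}{p_\va-2}-\tfrac94$, etc., drift with $\va$. This is precisely why Lemmas~\ref{L-1}, \ref{2-2}, \ref{2-3} were stated with constants independent of $\va$, relying on the estimate $\|Q_{p_\va}-Q_{\bar p}\|_{H^1(\r^3)}\to 0$ from Lemma~\ref{4-1}. I would therefore take care that the power-counting comparisons between the $l_\la$ and $R_\la$ terms hold with a margin uniform in $\va$ on $(0,\va_0)$ after possibly shrinking $\va_0$ and enlarging $\la_0$, so that the single pair $(\la_0,\va_0)$ works simultaneously. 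Since this uniformity is already packaged into the cited lemmas, the remaining work is the routine exponent bookkeeping, and I would present it tersely.
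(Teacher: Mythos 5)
Your proposal is correct and follows exactly the route the paper takes: the paper gives no written proof of Proposition~\ref{2-4} beyond the remark that combining Lemmas~\ref{L-1}, \ref{2-2}, \ref{2-3} with the contraction mapping theorem yields the result, and your fixed-point formulation $\omega=L_\la^{-1}(l_\la+R_\la(\omega))$ on the ball of radius comparable to $\|l_\la\|_\la$ is precisely that argument spelled out. The only minor caveat is that in the power-counting the smallness of the quadratic and $(p_\va-1)$-power terms of $R_\la$ relative to $\|l_\la\|_\la$ comes partly from the smallness of the fixed constant $\theta$ (when $|x_\la-b_0|\sim\theta$) rather than solely from a negative power of $\la$, but this does not affect the validity of the scheme.
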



So far, the problem \eqref{lambda} is reduced to a finite dimensional problem. We will find suitable $x_\la$ such that \eqref{reduction} holds in $H^1(\r^3)$.

\begin{proof}[\bf Proof of Theorem \ref{ExistenceWithoutNormalized}.]
For any $x_{\la}\in B_\theta(b_0)$,   let
 $u_{\la} = U_{x_{\la}, p_{\va}} + \omega_\la$, where $\omega_\la$ is the function obtained in Proposition~\ref{2-4}. Then we have 
 \[
 L_{\la} \omega_{\la} = l_{\la} + R_{\la}(\omega_{\la}) + \sum_{j=1}^{3}c_{\la,j}\frac{\partial U_{x_\la, p_\va}}{\partial x_j} \, \;\text{ in } H^1(\r^3),
 \]
 for some constants $c_{\la,j}$.
 To obtain a solution for $u_\la$ of \eqref{lambda},  we need to find a suitable $x_\la$ such that $c_{\la,j}=0$, which is equivalent to solving the following equations 
  \begin{equation}\label{2-5-2}
\begin{split}
 \int_{\mathbb{R}^{3}} \biggl(  \nabla u_{\la} \nabla \frac{\partial U_{x_{\la}, p_{\va}}}{\partial x_{j}}  
 +\la u_{\la} \frac{\partial U_{x_{\la}, p_{\va}} }{\partial x_{j}} 
 - V(x)u_{\la}^{p_{\va}-1} \frac{\partial U_{x_{\la}, p_{\va}}}{\partial x_{j}} 
 +  (|x|^{-1} \ast |u_{\la}|^{2})u_{\la} \frac{\partial U_{x_{\la}, p_{\va}}}{\partial x_{j}} &\biggr) \dx
 =0, \\
 & j=1,2, 3.
\end{split}
\end{equation}

Since $\omega_{\la} \in E_{\la}$, for $j =1, 2,3$, one has
\begin{equation}\label{2-5-3}\begin{split}
		& \int_{\mathbb{R}^{3}} \biggl(  \nabla u_{\la} \nabla \frac{\partial U_{x_{\la}, p_{\va}}}{\partial x_{j}}  +\la u_{\la} \frac{\partial U_{x_{\la}, p_{\va}}}{\partial x_{j}}- V(x) u_{\la}^{p_{\va}-1} \frac{\partial U_{x_{\la}, p_{\va}}}{\partial x_{j}}\biggr) \dx \\
		= & \int_{\mathbb{R}^{3}} \big(V_0-V(x)\big) U_{x_{\la}, p_{\va}}^{p_\va-1} \frac{\partial U_{x_{\la}, p_{\va}}}{\partial x_{j}} \dx
		- \int_{\mathbb{R}^{3}} V(x) \Bigl[ (U_{x_{\la},p_{\va}} +\omega_{\la})^{p_{\va}-1} - U_{x_{\la}, p_{\va}}^{p_{\va}-1}\Bigr] \frac{\partial U_{x_{\la}, p_{\va}}}{\partial x_{j}} \dx .
	\end{split}
\end{equation}
According to the integration by parts, we have
\begin{equation}\label{2-5-4}\begin{split}
		\int_{\mathbb{R}^{3}} \bigl(V_{0}-V(x)\bigr) U_{x_{\la}, p_{\va}}^{p_\va-1} \frac{\partial U_{x_{\la}, p_{\va}}}{\partial x_{j}} \dx
		= {}& \frac1{p_\va}\int_{\mathbb{R}^{3}} \frac{\partial V(x)}{\partial x_{j}} U^{p_\va}_{x_{\la}, p_{\va}} \dx \\
		={}& \frac{1}{p_\va}\frac{\la^{\frac{2}{p_\va-2}-\frac12}}{ V_{0}^{\frac{2}{p_{\va}-2}+1}} \frac{\partial V(x_{\la})}{\partial x_{j}} \int_{\mathbb{R}^{3}}Q_{p_{\va}}^{p_\va}(x) \dx 
		+ O\Bigl( \la^{\frac{2}{p_\va-2}-1}\Bigr).
	\end{split}
\end{equation}
It follows from H\"older inequality and \eqref{phi} that
\begin{equation}\label{2-5-5}\begin{split}
		& \int_{\mathbb{R}^{3}} V(x) \Bigl[ ( U_{x_{\la}, p_{\va}} +\omega_{\la})^{p_{\va}-1} - U_{x_{\la}, p_{\va}}^{p_{\va}-1}\Bigr] \frac{\partial U_{x_{\la}, p_{\va}}}{\partial x_{j}} \dx \\
		= {}& (p_\va-1)\int_{\mathbb{R}^{3}} (V(x)-V(x_{\la})) U_{x_{\la}, p_{\va}}^{p_{\va}-2} \omega_{\la}\frac{\partial U_{x_{\la}, p_{\va}}}{\partial x_{j}} \dx \\
		{}&+O\biggl( \int_{\mathbb{R}^{3}} U^{p_{\va}-3}_{x_{\la}, p_{\va}} |\omega_{\la}|^{2}  \bigg|\frac{\partial U_{x_{\la}, p_{\va}}}{\partial x_{j}}\bigg| \dx 
		+  \int_{\mathbb{R}^{3}}|\omega_{\la}|^{p_{\va}-1}  \bigg|\frac{\partial U_{x_{\la}, p_{\va}}}{\partial x_{j}}\bigg|  \dx  \biggr)\\
		={} & O\Bigl( |\nabla V(x_{\la})|\la^{\frac{1}{p_\va-2}-\frac14}\|\omega_{\la}\|_{\la} + \la^{\frac12} \|\omega_{\la}\|_{\la}^{2} + \la^{\frac{1}{p_\va-2}-\frac{p_\va}{4}-\frac34} \|\omega_{\la}\|_{\la}^{p_{\va}} \Bigr) .
	\end{split}
\end{equation}
Notice that
\[
\begin{split}
 &\int_{\mathbb{R}^{3}}   \big(|x|^{-1} \ast |U_{x_{\la}, p_{\va}}|^{2}\big)U_{x_{\la}, p_{\va}} \frac{\partial U_{x_{\la}, p_{\va}}}{\partial x_{j}}  \dx 	
 = \frac{1}{2} \int_{\r^3}\int_{\r^3} \frac{x_j-y_j}{|x-y|^3} U_{x_{\la}, p_{\va}}^{2} (x) U_{x_{\la}, p_{\va}}^{2} (y) \dx\dy,
\end{split}
\]
and 
\[
\begin{split}
	\int_{\mathbb{R}^{3}}   \big(|x|^{-1} \ast |U_{x_{\la}, p_{\va}}|^{2}\big)U_{x_{\la}, p_{\va}} \frac{\partial U_{x_{\la}, p_{\va}}}{\partial x_{j}}  \dx 	
	=& \int_{\mathbb{R}^{3}}   \Big(|x|^{-1} \ast U_{x_{\la}, p_{\va}} \frac{\partial U_{x_{\la}, p_{\va}}}{\partial x_{j}} \Big) |U_{x_{\la}, p_{\va}}|^{2} \dx \\	
	=& \frac{1}{2} \int_{\r^3}\int_{\r^3} \frac{y_j-x_j}{|x-y|^3} U_{x_{\la}, p_{\va}}^{2} (x) U_{x_{\la}, p_{\va}}^{2} (y) \dx\dy.
\end{split}
\]
Then we derive that
\begin{equation}\label{symetry}
\begin{split}
	\int_{\mathbb{R}^{3}}   (|x|^{-1} \ast |U_{x_{\la}, p_{\va}}|^{2})U_{x_{\la}, p_{\va}} \frac{\partial U_{x_{\la}, p_{\va}}}{\partial x_{j}}  \dx 	
	= 0.
\end{split}
\end{equation}
Thus, we obtain 
  \begin{equation}\label{2-5-6}
	\begin{split}
		& \int_{\mathbb{R}^{3}}   \big(|x|^{-1} \ast |u_{\la}|^{2}\big)u_{\la} \frac{\partial U_{x_{\la}, p_{\va}}}{\partial x_{j}}  \dx  \\
		={}&  \int_{\mathbb{R}^{3}}   \big(|x|^{-1} \ast |U_{x_{\la}, p_{\va}}|^{2}\big)U_{x_{\la}, p_{\va}} \frac{\partial U_{x_{\la}, p_{\va}}}{\partial x_{j}}  \dx 
		+ \int_{\mathbb{R}^{3}}   \big(|x|^{-1} \ast |U_{x_{\la}, p_{\va}}|^{2}\big)\omega_{\la} \frac{\partial U_{x_{\la}, p_{\va}}}{\partial x_{j}}  \dx  \\ 
		{}& +2\int_{\mathbb{R}^{3}}   \big(|x|^{-1} \ast |U_{x_{\la}, p_{\va}}\omega_{\la}|\big)U_{x_{\la}, p_{\va}}  \frac{\partial U_{x_{\la}, p_{\va}}}{\partial x_{j}}  \dx  
		+2\int_{\mathbb{R}^{3}}   \big(|x|^{-1} \ast |U_{x_{\la}, p_{\va}}\omega_{\la}|\big)\omega_{\la}  \frac{\partial U_{x_{\la}, p_{\va}}}{\partial x_{j}}  \dx \\
		{}& +\int_{\mathbb{R}^{3}}   \big(|x|^{-1} \ast |\omega_{\la}^2|\big)U_{x_{\la}, p_{\va}}  \frac{\partial U_{x_{\la}, p_{\va}}}{\partial x_{j}}  \dx  
		+\int_{\mathbb{R}^{3}}   \big(|x|^{-1} \ast |\omega_{\la}^2|\big)\omega_{\la}  \frac{\partial U_{x_{\la}, p_{\va}}}{\partial x_{j}}  \dx \\
		={}& O\Big(\la^{\frac{3}{p_\va-2}-\frac74} \|\omega_{\la}\|_\la
		+ \la^{\frac{2}{p_\va-2}-\frac32} \|\omega_{\la}\|_\la^2 
		+\la^{\frac{1}{p_\va-2}-\frac54} \|\omega_{\la}\|_\la^3 \Big).
	\end{split}
\end{equation}
Using the estimates in Lemma \ref{4-1} and Proposition \ref{2-4}, we immediately obtain that  \eqref{2-5-2} is equivalent to
\begin{equation}\label{nn2-5-6}
\frac{\partial V(x_{\la})}{\partial x_{j}} = O\bigl(\la^{-\frac12}\bigr), \,j=1,2, 3.
\end{equation}
Since $b_0$ is a non-degenerate point of $V(x)$, \eqref{nn2-5-6} has a solution $x_{\la}$ with $|x_{\la} - b_0| = O\bigl(\la^{-\frac12}\bigr)$ .
\end{proof}

\subsection{Local Uniqueness}

In the last subsection, we study the existence of peak solution $u_\la$ for \eqref{lambda} of the form
\begin{equation}\label{20-21-5}
u_{\la} = U_{x_{\la},p_\va} + \omega_\la,
\end{equation}
where  as $\lambda\to +\infty$,
\begin{equation}\label{10-21-5}
x_{\la}\to b_0, \quad\int_{\mathbb{R}^{3}} \bigl(|\nabla \omega_\la|^{2} + \la  \omega_\la^{2} \bigr) \dx =o\bigl(\lambda^{\frac{2}{p_\va-2}-\frac 12}\bigr).
\end{equation}
Lemmas~\ref{L-1}, \ref{2-2} and \ref{2-3} show that
for any single-peak solution $u_\la$
of the form \eqref{20-21-5}, it holds
\begin{equation}\label{21-21-5}
|x_{\la} - b_0| = O(\la^{-\frac12}),\quad \|\omega_{\la}\|_{\la}\leq C   \la^{\frac{3}{p_{\va}-2} -\frac{9}{4}} .
\end{equation}

We now study  the local uniqueness of $u_{\la}$ of the form \eqref{20-21-5}, satisfying \eqref{21-21-5}.

\begin{theorem}\label{3-7}
Suppose (V) holds. Then
there exist constants $\va_0>0$ and  $\lambda_0>0$, such that for
any $\lambda>\lambda_0$ and $0<\va<\va_0$, if
 \eqref{lambda} has   solutions $u^{(1)}_{\la}$ and $u_{\la}^{(2)}$ of the form \eqref{20-21-5}, satisfying \eqref{21-21-5}, then $u^{(1)}_{\la}=u_{\la}^{(2)}$.
\end{theorem}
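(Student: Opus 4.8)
The plan is to argue by contradiction, following the standard local-uniqueness strategy via local Pohozaev identities adapted to the normalized/nonlocal setting. Suppose $u_\la^{(1)}=U_{x_\la^{(1)},p_\va}+\omega_\la^{(1)}$ and $u_\la^{(2)}=U_{x_\la^{(2)},p_\va}+\omega_\la^{(2)}$ are two distinct solutions of \eqref{lambda} of the form \eqref{20-21-5} satisfying \eqref{21-21-5}. Set
\[
\eta_\la := \frac{u_\la^{(1)}-u_\la^{(2)}}{\|u_\la^{(1)}-u_\la^{(2)}\|_{L^\infty(\r^3)}},
\]
so that $\|\eta_\la\|_{L^\infty}=1$. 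First I would write down the linear equation satisfied by $\eta_\la$: subtracting the two equations and using the mean value theorem on the terms $V(x)u^{p_\va-1}$ and the quadratic Poisson convolution term, $\eta_\la$ solves a linear equation of the form $-\Delta\eta_\la+\la\eta_\la = c_\la(x)\eta_\la + (\text{nonlocal terms linear in }\eta_\la)$, where $c_\la$ is controlled by the two bubbles. Rescaling via $\bar\eta_\la(x)=\eta_\la(x/\sqrt\la+x_\la^{(1)})$ and using the estimates \eqref{21-21-5} together with Lemma~\ref{4-1}, I expect $\bar\eta_\la$ to converge (locally uniformly, after extracting a subsequence) to a solution $\eta_0$ of the linearized limit equation $-\Delta\eta_0+\eta_0=(\bar p-1)Q_{\bar p}^{\bar p-2}\eta_0$; crucially, the rescaled Poisson contributions must be shown to vanish in the limit by exactly the Hardy--Littlewood--Sobolev estimates already used in Lemmas~\ref{2-2} and \ref{2-3}. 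Nondegeneracy of $Q_{\bar p}$ then forces $\eta_0=\sum_{j} d_j\partial_{x_j}Q_{\bar p}$.

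The second step is to pin down the coefficients $d_j$ and arrive at a contradiction. I would derive the $d_j=0$ relations from two sources. The orthogonality-type condition coming from $\omega_\la^{(i)}\in E_{\la,p_\va}$ (as in \eqref{2-1-23}--\eqref{2-1-25}) gives that the projections of $\eta_0$ against $Q_{\bar p}^{\bar p-2}\partial_{x_j}Q_{\bar p}$ vanish, which already yields $d_j=0$ for each $j$, hence $\eta_0\equiv0$ and $\bar\eta_\la\to0$ in $L^2_{loc}$. To then contradict $\|\eta_\la\|_{L^\infty}=1$, I would establish a decay/pointwise estimate: away from the concentration point the maximum principle (or an integral representation using the Green kernel for $-\Delta+\la$) forces $\eta_\la$ to be small, so the sup-norm must be essentially attained near $x_\la^{(1)}$; but $\bar\eta_\la\to0$ locally uniformly there contradicts the normalization. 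This part requires a uniform global control of $\eta_\la$ in terms of its local behavior, for which the nonlocal term is the delicate ingredient.

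The third and genuinely new step — and the one the introduction flags (``we shall deal with the nonlocal terms in local Pohozaev identities, which forces us to consider single-peak solutions only'') — is to extract the location information via \emph{local Pohozaev identities}. Multiplying the difference equation by $\partial_{x_j}u_\la^{(i)}$ and integrating over a ball $B_d(x_\la^{(1)})$ of fixed radius, one obtains identities whose boundary terms are exponentially small and whose interior terms encode $\nabla V$ at the peak. Comparing the two solutions and dividing by $\|u_\la^{(1)}-u_\la^{(2)}\|_{L^\infty}$, the leading contribution involves the Hessian $D^2V(b_0)$ paired against the $d_j$, using the non-degeneracy hypothesis in (V). The nonlocal Poisson term contributes a commutator-type quantity on $B_d$; here the symmetry identity \eqref{symetry} and the HLS bounds must be invoked to show this contribution is of strictly lower order than the $\la^{-1}D^2V(b_0)d$ term. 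This is where restricting to a single peak is essential, since with multiple peaks the cross interactions of the convolution kernel would enter at the same order and could not be discarded.

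The main obstacle I anticipate is precisely controlling the nonlocal term $\big(|x|^{-1}\ast|u|^2\big)u$ in both the linearized convergence argument and the Pohozaev identities uniformly in $\va\in(0,\va_0)$ as $\la\to+\infty$. Unlike the purely local Schrödinger case, the convolution couples the behavior of $\eta_\la$ on all of $\r^3$ to the local estimate near $b_0$, so the vanishing of its rescaled contribution is not automatic and must be extracted carefully from the scaling $\la^{\frac{k}{p_\va-2}-\cdots}$ exponents, using that for the single-peak profile these exponents are strictly smaller than those of the dominant potential/nonlinear terms. Keeping every estimate uniform in $\va$ (so that $\la_0$ and $\va_0$ are chosen independently, as in Theorem~\ref{ExistenceWithoutNormalized}) is the technical heart, and it relies throughout on the $H^1$-closeness $\|Q_{p_\va}-Q_{\bar p}\|_{H^1}\to0$ furnished by Lemma~\ref{4-1}.
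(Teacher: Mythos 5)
Your overall architecture (contradiction argument, normalized difference $\eta_\la$, exterior decay by comparison, blow-up to $\sum_j d_j\partial_{x_j}Q_{\bar p}$ by non-degeneracy, and local Pohozaev identities on $B_d(x_\la^{(1)})$ to kill the coefficients via $D^2V(b_0)$) matches the paper's proof. However, your second step contains a genuine error: you claim that the orthogonality conditions $\omega_\la^{(i)}\in E_{\la,p_\va}$, as in \eqref{2-1-23}--\eqref{2-1-25}, transfer to $\eta_\la$ and ``already yield $d_j=0$.'' This fails. The argument in \eqref{2-1-23}--\eqref{2-1-25} applies to a function $\xi$ that is itself orthogonal to $\partial_{x_j}U_{x_\la,p_\va}$, whereas
\[
u_\la^{(1)}-u_\la^{(2)}=\bigl(U_{x_\la^{(1)},p_\va}-U_{x_\la^{(2)},p_\va}\bigr)+\bigl(\omega_\la^{(1)}-\omega_\la^{(2)}\bigr),
\]
and the bubble difference is, to leading order, $-\nabla U_{x_\la^{(1)},p_\va}\cdot(x_\la^{(2)}-x_\la^{(1)})$ --- i.e.\ it points exactly along the translation modes. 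Moreover $\omega_\la^{(1)}$ and $\omega_\la^{(2)}$ are orthogonal to derivatives of bubbles centered at \emph{different} points, so their difference is orthogonal to neither. After dividing by $\|u_\la^{(1)}-u_\la^{(2)}\|_{L^\infty}$ the translation component can be of order one, which is precisely why the limit $\eta_0$ can a priori be a nontrivial combination of the $\partial_{x_j}Q_{\bar p}$ and why local uniqueness is not immediate. If your step 2 were valid, the Pohozaev identities of your step 3 would be superfluous.

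The entire burden of showing $d_j=0$ therefore rests on your step 3, which you sketch correctly in outline but which needs one ingredient you omit: before the Hessian term $\sum_h\partial^2_{jh}V(b_0)\,d_h\int_{\r^3}x_hQ_{\bar p}^{\bar p-1}\partial_{x_h}Q_{\bar p}\dx$ can be identified as the leading contribution of the difference Pohozaev identity, one must first improve the location estimate from $|x_\la-b_0|=O(\la^{-1/2})$ to $|x_\la-b_0|=o(\la^{-1/2})$ (the paper's Lemma~\ref{3-3}, obtained by applying the Pohozaev identity to a single solution and using the symmetry cancellation \eqref{symetry} of the Poisson term). Without this, the zeroth-order Taylor term $\la^{-3/2}\partial_{x_j}V(x_\la^{(1)})\int F\,\tilde\eta_\la$ is not negligible compared with the Hessian term and the conclusion $d_j=0$ cannot be drawn. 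With that addition, and with the orthogonality argument deleted, your plan coincides with the paper's proof.
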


\begin{remark}
For fixed $\va>0$ and sufficiently large $\lambda$, the local uniqueness result can be easily proved by applying the same strategy in \cite{Li2021}.  In the above theorem,
we prove the local uniqueness result for $\lambda>\lambda_0$, where $\lambda_0>0$ is independent of $\va\in (0, \va_0)$.
\end{remark}

The main tool to prove the local uniqueness  is the following local Pohozaev identity
for  a solution $u$ of \eqref{lambda}.

\begin{proposition}
	It holds that
	\begin{equation}\label{Pohozaev}\begin{split}
			&\frac{1}{p_\va} \int_{ B_{d}(x_{\la})} \frac{\partial V(x)}{\partial x_j} u^{p_\va} \dx  \\ 
			=  {}& \int_{\partial B_{d}(x_{\la})} \frac{\partial u}{\partial \nu} \frac{\partial u}{\partial x_{j}} \dsi
			-\frac12  \int_{\partial B_{d}(x_{\la})} |\nabla u|^2 \nu_j \dsi
			- \frac{\la }{2}\int_{\partial B_{d}(x_{\la})} u^2 \nu_j \dsi
			+ \frac{1}{p_\va} \int_{\partial B_{d}(x_{\la})} V(x) u^{p_\va} \nu_j \dsi  \\
			{}&- \frac12 \int_{\partial B_{d}(x_{\la})}\int_{\r^3} \frac{u^2(y)}{|x-y|} \dy \cdot u^2(x) \nu_j \dsi 
			+\frac12 \int_{B_{d}(x_{\la})}\int_{\r^3} \frac{x_j-y_j}{|x-y|^3} u^2(y) \dy \cdot u^2(x) \dx , 
		\end{split}
	\end{equation}
	where $\nu$ is the unit outward normal to  $\partial B_{d}(x_{\la})$.
\end{proposition}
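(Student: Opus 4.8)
The plan is to read \eqref{Pohozaev} as the integrated translation defect of \eqref{lambda}: multiply the equation by $\frac{\partial u}{\partial x_j}$ and integrate over the fixed ball $B_d(x_\la)$, then integrate by parts term by term. Since $V\in C^2$, the exponent $p_\va$ is Sobolev-subcritical and the Riesz potential $|x|^{-1}\ast u^2$ is bounded and $C^1$, elliptic regularity guarantees that any $H^1$ solution is $C^2$ near $\partial B_d(x_\la)$, so every integration by parts and boundary integral below is classical. As the radius $d$ is a fixed small constant, $\partial B_d(x_\la)$ is a smooth sphere on which $u$ is regular, and no interior singularity of the kernel is encountered. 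I emphasize that the identity is purely local and holds for \emph{any} such solution; the precise form \eqref{20-21-5}--\eqref{21-21-5} is not needed to establish it.

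I would first dispatch the three local terms. Integrating $-\int_{B_d(x_\la)}\Delta u\,\frac{\partial u}{\partial x_j}\dx$ by parts and using $\nabla u\cdot\nabla\big(\frac{\partial u}{\partial x_j}\big)=\frac12\frac{\partial}{\partial x_j}|\nabla u|^2$ produces exactly the two surface terms $\int_{\partial B_d(x_\la)}\frac{\partial u}{\partial\nu}\frac{\partial u}{\partial x_j}\dsi$ and $-\frac12\int_{\partial B_d(x_\la)}|\nabla u|^2\nu_j\dsi$. The linear term $\la\int_{B_d(x_\la)}u\,\frac{\partial u}{\partial x_j}\dx=\frac{\la}{2}\int_{B_d(x_\la)}\frac{\partial}{\partial x_j}(u^2)\dx$ contributes only $-\frac{\la}{2}\int_{\partial B_d(x_\la)}u^2\nu_j\dsi$. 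For the nonlinearity, rewriting $V u^{p_\va-1}\frac{\partial u}{\partial x_j}=\frac{1}{p_\va}V\frac{\partial}{\partial x_j}(u^{p_\va})$ and transferring the derivative onto $V$ yields the boundary term $\frac{1}{p_\va}\int_{\partial B_d(x_\la)}V u^{p_\va}\nu_j\dsi$ together with the interior term $-\frac{1}{p_\va}\int_{B_d(x_\la)}\frac{\partial V}{\partial x_j}u^{p_\va}\dx$; moving this last term to the other side gives the left-hand member of \eqref{Pohozaev}.

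The delicate step, and the main obstacle, is the nonlocal term $\int_{B_d(x_\la)}\big(|x|^{-1}\ast u^2\big)u\,\frac{\partial u}{\partial x_j}\dx$. Writing $\phi(x)=\int_{\r^3}\frac{u^2(y)}{|x-y|}\dy$ and the integrand as $\frac12\phi\frac{\partial}{\partial x_j}(u^2)$, one integration by parts over $B_d(x_\la)$ produces the surface term $\frac12\int_{\partial B_d(x_\la)}\phi\,u^2\nu_j\dsi$ and the interior term $-\frac12\int_{B_d(x_\la)}\frac{\partial\phi}{\partial x_j}\,u^2\dx$. Here I would differentiate the Riesz kernel under the integral sign, $\frac{\partial}{\partial x_j}|x-y|^{-1}=-\frac{x_j-y_j}{|x-y|^3}$, which is legitimate because $|x-y|^{-2}$ is locally integrable in $\r^3$ and $u$ decays; this turns the interior term into the double integral with kernel $\frac{x_j-y_j}{|x-y|^3}$ appearing in \eqref{Pohozaev}. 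The crucial point is that the inner convolution ranges over all of $\r^3$ while the outer integral is confined to $B_d(x_\la)$, so the exact $x\leftrightarrow y$ antisymmetry that annihilated the analogous quantity in \eqref{symetry} now fails: only the $B_d\times B_d$ part cancels and a genuine $B_d\times(\r^3\setminus B_d)$ interaction survives. This is precisely the nonlocal obstruction noted after Theorem~\ref{UniquenessOfNormalized}, and it is what will later confine the method to single-peak solutions.

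It then remains to collect the three local and two nonlocal contributions, keep $\frac{1}{p_\va}\int_{B_d(x_\la)}\frac{\partial V}{\partial x_j}u^{p_\va}\dx$ on the left and place all surface and double-integral terms on the right, which gives \eqref{Pohozaev}. The only nonroutine ingredients are the elliptic regularity that legitimizes the boundary integrals and the differentiation of the Riesz potential; the remainder is bookkeeping of boundary terms.
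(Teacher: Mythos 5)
Your proposal is essentially the paper's own proof: multiply \eqref{lambda} by $\frac{\partial u}{\partial x_j}$, integrate over $B_d(x_\la)$, and integrate by parts term by term, with the Riesz potential $\phi=|x|^{-1}\ast u^2$ handled by writing $\phi\, u\,\partial_j u=\tfrac12\phi\,\partial_j(u^2)$ and differentiating the kernel; the regularity remarks you add are fine and the paper simply takes them for granted. One caveat on bookkeeping: your own (correct) computation gives the interior nonlocal contribution $-\tfrac12\int_{B_d(x_\la)}(\partial_j\phi)\,u^2\dx=+\tfrac12\int_{B_d(x_\la)}\int_{\r^3}\frac{x_j-y_j}{|x-y|^3}u^2(y)\dy\,u^2(x)\dx$ on the \emph{left-hand} side of the multiplied equation, so after moving everything to the right this term should appear with a minus sign, whereas \eqref{Pohozaev} (via the paper's \eqref{Pohozaev2}) records it with a plus sign; you assert agreement with \eqref{Pohozaev} without tracking this final sign. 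The discrepancy is harmless for everything downstream, since that double integral is always estimated as $O(e^{-\tau\sqrt{\la}})$ by the antisymmetry of the kernel and the decay of $u$, but a careful write-up should either fix the sign or note that it is immaterial.
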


\begin{proof}
	Multiplying \eqref{lambda} by $\frac{\partial u}{\partial x_j}$ and integrating in $B_{d}(x_{\la})$, we have
	\begin{equation}\label{Pohozaev1}
		\int_{B_{d}(x_{\la})} -\Delta u \frac{\partial u}{\partial x_{j}} \dx 
		+ \la \int_{B_{d}(x_{\la})} u  \frac{\partial u}{\partial x_{j}} \dx
		+ \int_{B_{d}(x_{\la})} \big(|x|^{-1}\ast u^2\big) u \frac{\partial u}{\partial x_{j}}  \dx
		=  \int_{ B_{d}(x_{\la})} V(x) u^{p_\va-1}\frac{\partial u}{\partial x_{j}} \dx .
	\end{equation}
	It is easy to check that
	\begin{equation}\label{Pohozaev2}
		\begin{split}
			&\int_{B_{d}(x_{\la})} -\Delta u \frac{\partial u}{\partial x_{j}} \dx 
			+ \la \int_{B_{d}(x_{\la})} u  \frac{\partial u}{\partial x_{j}} \dx
			+ \int_{B_{d}(x_{\la})} \big(|x|^{-1}\ast u^2\big) u \frac{\partial u}{\partial x_{j}} \dx  \\
			={}& - \int_{\partial B_{d}(x_{\la})} \frac{\partial u}{\partial \nu} \frac{\partial u}{\partial x_{j}} \dsi 
			+\frac12  \int_{\partial B_{d}(x_{\la})} |\nabla u|^2 \nu_j \dsi
			+ \frac{\la }{2}\int_{\partial B_{d}(x_{\la})} u^2 \nu_j \dsi \\
			{}&+ \frac12 \int_{\partial B_{d}(x_{\la})}\int_{\r^3} \frac{u^2(y)}{|x-y|} \dy \cdot u^2(x) \nu_j \dsi 
			-\frac12 \int_{B_{d}(x_{\la})}\int_{\r^3} \frac{x_j-y_j}{|x-y|^3} u^2(y) \dy \cdot u^2(x) \dx
		\end{split}
	\end{equation}
	and 
	\begin{equation}\label{Pohozaev3}
		\begin{split}
			\int_{ B_{d}(x_{\la})} V(x) u^{p_\va-1}\frac{\partial u}{\partial x_{j}} \dx 
			= \frac{1}{p_\va} \int_{\partial B_{d}(x_{\la})} V(x) u^{p_\va} \nu_j \dsi 
			- \frac{1}{p_\va} \int_{ B_{d}(x_{\la})} \frac{\partial V(x)}{\partial x_j} u^{p_\va} \dx.
		\end{split}
	\end{equation}
	Then the result follows from \eqref{Pohozaev1}-\eqref{Pohozaev3}.
\end{proof}

To use \eqref{Pohozaev}, we need to estimate the solutions $u_\la$ of \eqref{lambda}, having the properties
stated in Theorem~\ref{ExistenceWithoutNormalized}.

\begin{lemma}\label{3-1} Fix $d>0$ small. For any $\theta>0$ small,
there exists  $C>0$, independent of $\va$, such that for any $\va\in (0, \va_0)$, we have
\[|u_{\la}(x)| \leq C e^{-(1-\theta) \sqrt{\la}|x- x_{\la}|},\quad
 \text{ for all }\,  x \in \mathbb{R}^{3},\]
and
\[|\nabla u_{\la}(x)| \leq C e^{-(1-\theta) \sqrt{\la}},
\quad \text{ for all } \, x\in \partial B_{d}(x_{\la}) . \]
\end{lemma}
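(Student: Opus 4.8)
The plan is to exploit the sign of the nonlocal term to reduce \eqref{lambda} to a clean differential inequality, and then to extract the decay from a comparison argument against an exponential barrier; the gradient bound will follow from interior elliptic estimates. The essential preliminary is a uniform-in-$\va$ $L^\infty$ control of $u_\la$ after rescaling to the concentration scale.

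\emph{Step 1: a uniform $L^\infty$ bound and smallness away from the peak.} Set $\bar u_\la(y)=(\la/V_0)^{-\frac1{p_\va-2}}u_\la(x_\la+y/\sqrt\la)$, so that $\bar u_\la=Q_{p_\va}+\bar\omega_\la$ with $\bar\omega_\la(y)=(\la/V_0)^{-\frac1{p_\va-2}}\omega_\la(x_\la+y/\sqrt\la)$. A change of variables converts the bound $\|\omega_\la\|_\la\le C\la^{\frac3{p_\va-2}-\frac94}$ of \eqref{21-21-5} into $\|\bar\omega_\la\|_{H^1(\r^3)}^2\le C\la^{\frac4{p_\va-2}-4}$, and since the exponent equals $-1$ at $p_\va=\bar p$ and stays negative for $\va_0$ small, we get $\|\bar\omega_\la\|_{H^1}\to0$ as $\la\to\infty$, uniformly for $\va\in(0,\va_0)$. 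The rescaled equation reads
\[
-\Delta\bar u_\la+\bar u_\la+c_\la\big(|y|^{-1}\ast\bar u_\la^2\big)\bar u_\la=V_0^{-1}V(x_\la+y/\sqrt\la)\,\bar u_\la^{p_\va-1},
\]
where $c_\la=V_0^{-\frac2{p_\va-2}}\la^{\frac2{p_\va-2}-2}\to0$. The nonlinear exponent $p_\va-1$ stays in a compact subset of the subcritical range and the nonlocal coefficient is lower order, so a standard Moser/$L^q$-bootstrap, whose constants depend only on the compact $p_\va$-range and on local bounds for $V$ near $b_0$, yields $\|\bar u_\la\|_{L^\infty}\le C$ and $\|\bar\omega_\la\|_{L^\infty}\to0$, all uniform in $\va$. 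Combined with the decay of $Q_{p_\va}$ supplied by Lemma~\ref{4-1}, this gives $\sup_{|y|\ge R}\bar u_\la\to0$ as $R\to\infty$, uniformly. Undoing the scaling produces $\|u_\la\|_{L^\infty}\le C\la^{\frac1{p_\va-2}}$.

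\emph{Step 2: comparison.} Because $u_\la>0$ and $|x|^{-1}\ast u_\la^2\ge0$, equation \eqref{lambda} yields the clean inequality $-\Delta u_\la+\la u_\la\le V(x)u_\la^{p_\va-2}u_\la$. By Step 1, given any small $\sigma>0$ one fixes $R$ (large, independent of $\va$) so that $V(x)u_\la^{p_\va-2}\le\sigma\la$ on $\{R/\sqrt\la\le|x-x_\la|\le d_0\}$ for a fixed $d_0$ on which $V$ is bounded; there $-\Delta u_\la+(1-\sigma)\la u_\la\le0$. The barrier $W(x)=M\,e^{-\sqrt{(1-\sigma)\la}\,|x-x_\la|}$ satisfies $-\Delta W+(1-\sigma)\la W=M\,\tfrac{2\sqrt{(1-\sigma)\la}}{|x-x_\la|}e^{-\sqrt{(1-\sigma)\la}\,|x-x_\la|}>0$, and the choice $M=C\la^{\frac1{p_\va-2}}e^{\sqrt{1-\sigma}\,R}$ forces $W\ge u_\la$ on the inner sphere $\{|x-x_\la|=R/\sqrt\la\}$. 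The maximum principle for $-\Delta+(1-\sigma)\la$ then gives $u_\la\le W$ on this annulus, and choosing $\sigma$ with $\sqrt{1-\sigma}\ge1-\theta$ yields $|u_\la(x)|\le C\la^{\frac1{p_\va-2}}e^{-(1-\theta)\sqrt\la|x-x_\la|}$; on the bounded set the polynomial prefactor is absorbed into the exponential at the cost of enlarging $\theta$, giving the stated form, and the region $|x-x_\la|>d_0$ is handled by the same comparison using $u_\la\in H^1$. For the gradient bound, write \eqref{lambda} as $-\Delta u_\la=f_\la$ with $f_\la=-\la u_\la-(|x|^{-1}\ast u_\la^2)u_\la+V(x)u_\la^{p_\va-1}$; on balls $B_{d/4}(x)$ with $x\in\partial B_d(x_\la)$ each term of $f_\la$ is a fixed power of $\la$ (the factor $\la$ and the bound $|x|^{-1}\ast u_\la^2\le C\la^{\frac2{p_\va-2}-1}$) times the exponentially small quantity from Step 2, so interior $W^{2,q}$ estimates with $q>3$ and $W^{2,q}\hookrightarrow C^1$ give $|\nabla u_\la(x)|\le Ce^{-(1-\theta)\sqrt\la}$, uniformly in $\va$.

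\emph{Main obstacle.} The delicate point is not the barrier but Step 1: securing the $L^\infty$ bound and the smallness of $\bar\omega_\la$ with constants uniform in $\va$, since $p_\va$ varies with $\va$. This is precisely where Lemma~\ref{4-1} (uniform control and decay of $Q_{p_\va}$) and the compactness of the exponent range are indispensable. By contrast, the nonlocal term is benign: its nonnegativity is exactly what produces the clean subsolution inequality driving Step 2, and its size is lower order throughout.
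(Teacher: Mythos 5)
Your proof is correct and follows exactly the route the paper intends: the paper omits this proof entirely, saying only that it is ``standard by using the comparison principle'' with a reference to \cite{CPY}, and your rescaling-plus-exponential-barrier argument is precisely that standard argument, with the uniform-in-$\va$ $L^\infty$ control of Step 1 supplying the one ingredient that genuinely needs checking when $p_\va$ varies. The only caveat is cosmetic: on $\partial B_{d}(x_{\la})$ the barrier yields $e^{-(1-\theta)d\sqrt{\la}}$ rather than the literal $e^{-(1-\theta)\sqrt{\la}}$ of the statement, but since $d$ is fixed and every subsequent use of the lemma only requires $O(e^{-\tau\sqrt{\la}})$ for some $\tau>0$, this is immaterial.
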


\begin{proof}
 The proof is standard by using the comparison principle. See for example the proof of Lemma~2.1.2 and Lemma~3.1.1 in \cite{CPY}. We thus omit it.
\end{proof}

We now improve the estimate for $|x_{\la} - b_0|$.

\begin{lemma}\label{3-3} If $u_{\la}$ is the solution of \eqref{lambda} of the form \eqref{2}, then there exists a constant $\va_0>0$ such that $x_{\la}$ satisfies
\[|x_{\la} - b_{0}| = o(\la^{-\frac12}), \]
for any $\va\in(0,\va_0)$.
\end{lemma}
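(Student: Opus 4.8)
The plan is to read the local Pohozaev identity \eqref{Pohozaev}, applied to the solution $u_\la$ on the ball $B_d(x_\la)$, as an identity in which the left-hand side $\frac{1}{p_\va}\int_{B_d(x_\la)}\partial_{x_j}V\,u_\la^{p_\va}\dx$ isolates $\nabla V(x_\la)$ to leading order, while the entire right-hand side turns out to be negligible. We already know from \eqref{21-21-5} that $|x_\la-b_0|=O(\la^{-1/2})$ and from the reduced equation \eqref{nn2-5-6} that $\nabla V(x_\la)=O(\la^{-1/2})$; the whole point is to upgrade the latter to $\nabla V(x_\la)=o(\la^{-1/2})$. Once this is done, non-degeneracy of $b_0$ and $\nabla V(x_\la)=D^2V(b_0)(x_\la-b_0)+o(|x_\la-b_0|)$ (recall $\nabla V(b_0)=0$) give $|x_\la-b_0|\le C|\nabla V(x_\la)|=o(\la^{-1/2})$, which is the claim.

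First I would dispose of the right-hand side of \eqref{Pohozaev}. On $\partial B_d(x_\la)$ one has $|x-x_\la|=d$, so Lemma~\ref{3-1} gives that $u_\la$ and $\nabla u_\la$ are $O(e^{-(1-\theta)\sqrt\la d})$ there; hence the four local boundary integrals and the nonlocal boundary integral are all $O(e^{-\tau\sqrt\la})$ for some $\tau>0$ (polynomial prefactors in $\la$ being swamped by the exponential). For the nonlocal volume term, the crucial point is that the kernel $\frac{x_j-y_j}{|x-y|^3}$ is antisymmetric under $x\leftrightarrow y$, so $\int_{\r^3}\int_{\r^3}\frac{x_j-y_j}{|x-y|^3}u_\la^2(y)\,u_\la^2(x)\dy\dx=0$; therefore the integral over $B_d(x_\la)$ equals minus the integral over $\r^3\setminus B_d(x_\la)$, which is again $O(e^{-\tau\sqrt\la})$ because $u_\la$ is exponentially small off $B_d(x_\la)$ by Lemma~\ref{3-1}, while the inner potential $\int_{\r^3}|x-y|^{-2}u_\la^2(y)\dy$ grows at most polynomially in $\la$. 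Thus \eqref{Pohozaev} collapses to $\int_{B_d(x_\la)}\partial_{x_j}V\,u_\la^{p_\va}\dx=O(e^{-\tau\sqrt\la})$.

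Next I would extract the leading behaviour of the left-hand side. Rescaling $x=x_\la+z/\sqrt\la$ and writing $u_\la=U_{x_\la,p_\va}+\omega_\la$, the principal contribution is $\int\partial_{x_j}V\,U_{x_\la,p_\va}^{p_\va}\dx$. Taylor expanding $\partial_{x_j}V(x_\la+z/\sqrt\la)$ about $x_\la$, the zeroth-order term produces $c\,V_0^{-\frac{p_\va}{p_\va-2}}\la^{\frac{p_\va}{p_\va-2}-\frac32}\,\partial_{x_j}V(x_\la)\int_{\r^3}Q_{p_\va}^{p_\va}(z)\,\mathrm{d}z$, the first-order Hessian term vanishes since $\int_{\r^3}z_k\,Q_{p_\va}^{p_\va}(z)\,\mathrm{d}z=0$ by the radial symmetry of $Q_{p_\va}$, and the remainder contributes only $o(\la^{-1/2})$ times the leading scale $\la^{\frac{p_\va}{p_\va-2}-\frac32}$ (here one uses $V\in C^2$, $x_\la\to b_0$, and dominated convergence after concentrating the mass of $U_{x_\la,p_\va}^{p_\va}$ in $B_{R/\sqrt\la}(x_\la)$). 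The contributions involving $\omega_\la$ are controlled by H\"older's inequality, \eqref{phi}, the bound $\|\omega_\la\|_\la\le C\la^{\frac{3}{p_\va-2}-\frac94}$ from \eqref{21-21-5}, and $\nabla V(x_\la)=O(\la^{-1/2})$: a direct count of powers shows each is $o(\la^{\frac{p_\va}{p_\va-2}-2})$, uniformly for $\va\in(0,\va_0)$ thanks to Lemma~\ref{4-1}. Combining, $c\,V_0^{-\frac{p_\va}{p_\va-2}}\big(\int Q_{p_\va}^{p_\va}\big)\la^{\frac{p_\va}{p_\va-2}-\frac32}\,\partial_{x_j}V(x_\la)(1+o(1))=o(\la^{\frac{p_\va}{p_\va-2}-2})$, whence $\partial_{x_j}V(x_\la)=o(\la^{-1/2})$ for each $j$.

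I expect the main obstacle to lie in the bookkeeping of the last step: one must check that \emph{every} correction — the Taylor remainder of $\nabla V$ (only $C^1$, since $V\in C^2$), the cross terms between $U_{x_\la,p_\va}$ and $\omega_\la$, and the purely nonlocal contributions — is of strictly lower order than the leading scale $\la^{\frac{p_\va}{p_\va-2}-\frac32}$, \emph{after} one extra power $\la^{-1/2}$ is absorbed, and that all constants are independent of $\va$. The two delicate features are that the first moment $\int z_k\,Q_{p_\va}^{p_\va}$ must vanish \emph{exactly} (this is what kills the Hessian term and forces the gain), and that one cannot afford better than $o(|x-x_\la|)$ in the expansion of $\nabla V$; localizing to $B_{R/\sqrt\la}(x_\la)$ before expanding circumvents the latter. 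Everything else is a routine power count once the uniform-in-$\va$ estimates of Section~2 and Lemma~\ref{4-1} are invoked.
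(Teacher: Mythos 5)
Your proposal is correct and follows essentially the same route as the paper: the local Pohozaev identity on $B_d(x_\la)$, exponential smallness of the boundary terms via Lemma~\ref{3-1}, the antisymmetry of the kernel $\frac{x_j-y_j}{|x-y|^3}$ to convert the nonlocal volume term into an integral over $\r^3\setminus B_d(x_\la)$, and a Taylor expansion of $\partial_{x_j}V$ in which the leading term isolates $\la^{\frac{2}{p_\va-2}-\frac12}\partial_{x_j}V(x_\la)$, the first moment of $Q_{p_\va}^{p_\va}$ vanishes by radial symmetry, and the $\omega_\la$-corrections are power-counted using $\|\omega_\la\|_\la\le C\la^{\frac{3}{p_\va-2}-\frac94}$. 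Your remark that $V\in C^2$ only yields an $o(|x-x_\la|)$ remainder for $\nabla V$ is in fact slightly more careful than the paper's $O(|x-x_\la|^2)$ bookkeeping, but it does not change the conclusion $\partial_{x_j}V(x_\la)=o(\la^{-1/2})$.
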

\begin{proof} By Lemma \ref{3-1} and the symmetry, we see that
\begin{equation}\label{3-3-1}
	\begin{split}
		\text{RHS of \eqref{Pohozaev}} ={}& \frac12 \int_{B_{d}(x_{\la})}\int_{\r^3} \frac{x_j-y_j}{|x-y|^3} u_\la^2(y) \dy \cdot u_\la^2(x) \dx + O(e^{-\tau \sqrt{\la}})\\
		={}& -\frac12 \int_{\r^3\setminus B_{d}(x_{\la})}\int_{\r^3} \frac{x_j-y_j}{|x-y|^3} u_\la^2(y) \dy \cdot u_\la^2(x) \dx + O(e^{-\tau \sqrt{\la}})\\
		={}& O(e^{-\tau \sqrt{\la}}),
	\end{split}
\end{equation}
for some $\tau>0$ small. Then, \eqref{Pohozaev} is equivalent to 
\begin{equation}\label{3-3-1-0}\begin{split}
 \int_{B_{d}(x_{\la})} \frac{\partial V(x)}{\partial x_{j}} u_{\la}^{p_\va} \dx =O(e^{-\tau \sqrt{\la}}).
\end{split}
\end{equation}
By the Taylor's expansion, we have
\begin{equation}
	\begin{split}
		  &\int_{B_{d}(x_{\la})} \frac{\partial V(x)}{\partial x_{j}} u_{\la}^{p_\va} \dx  \\
		 ={}& \frac{\partial V(x_{\la})}{\partial x_{j}} \int_{B_{d}(x_{\la})}  u_{\la}^{p_\va} \dx 
		 +  \int_{B_{d}(x_{\la})}  \langle \nabla \frac{\partial V(x_{\la})}{\partial x_{j}}, x -x_{\la} \rangle u_{\la}^{p_\va} \dx + O\Bigl(  \int_{B_{d}(x_{\la})}  |x-x_{\la}|^{2} u_{\la}^{p_\va} \dx \Bigr)\\ 
		 =:{}& I_1 +I_2 +O(\la^{\frac{2}{p_\va-2}-\frac32}).
	\end{split}
\end{equation}
By the direct computations, we obtain
\begin{equation}
	\begin{split}
		I_1 =& A V_0^{-\frac{p_\va}{p_\va-2}} \la^{\frac{2}{p\va-2}-\frac12}  \frac{\partial V(x_{\la})}{\partial x_{j}} ,
	\end{split}
\end{equation}
where
\begin{equation}
	A = \int_{\r^3} Q_{p_\va}^{p_\va} \dx >0
\end{equation}
is a constant.
It follows from  Proposition \ref{2-4} that
\begin{equation}\label{3-3-2}\begin{split}
I_2 
= {}&  \int_{B_{d}(x_{\la})}  \langle \nabla \frac{\partial V(x_{\la})}{\partial x_{j}}, x-x_{\la} \rangle \Bigl( U_{x_{\la}, p_{\va}}^{p_\va} + p_\va U_{x_{\la}, p_{\va}}^{p_\va-1} \omega_{\la} +O\big(U_{x_{\la}, p_{\va}}^{p_\va-2}\omega_{\la}^{2} +\omega_{\la}^{p_\va}\big) \Bigr) \dx  \\
= {}&  p_\va  \int_{B_{d}(x_{\la})} \langle \nabla \frac{\partial V(x_{\la})}{\partial x_{j}}, x-x_{\la} \rangle  U_{x_{\la}, p_{\va}}^{p_\va-1} \omega_{\la} \dx \\
{}&+O\Big( \int_{B_{d}(x_{\la})} |x-x_{\la}| U_{x_{\la}, p_{\va}}^{p_\va-2}\omega_{\la}^{2} \dx + \int_{B_{d}(x_{\la})} |x-x_{\la}| \omega_{\la}^{p_\va} \dx \Big)  \\
\leq{}& C \la^{\frac{1}{p_\va-2}-\frac34} \|\omega_{\la}\|_\la + C \la^{-\frac12} \|\omega_{\la}\|_\la^2 + C \la^{\frac{p_\va}{4}-2} \|\omega_{\la}\|_\la^{p_\va} \\
\leq{}& C  \la^{\frac{4}{p_\va-2}-3} .
 \end{split}
\end{equation}

 Combining \eqref{3-3-1-0}-\eqref{3-3-2}, we are led to
\begin{equation}\label{3-3-6}
 \frac{\partial V(x_{\la})}{\partial x_j}  = O\Bigl(\la^{-1}+\la^{\frac{2}{p_{\va}-2}-\frac52 } \Bigr) = o(\la^{-\frac12}).
\end{equation}
Since $b_0$ is a non-degenerate critical point of $V(x)$, we obtain
\[|x_{\la} - b_0| = o(\la^{-\frac12}).\]
The proof is completed.
\end{proof}

To prove the local uniqueness result, we  argue by contradiction. Suppose that
there are $\va_n\to 0$, $\la_n\to +\infty$, such that  \eqref{lambda}
has two different solutions $u^{(1)}_{\la_n}$ and $u_{\la_n}^{(2)}$ of the form
\[u_{\la_n}^{(j)}(x) =\Big(\frac{\la_n}{V_0}\Big)^{\frac{1}{p_{\va_n}-2}}Q_{p_{\va_n}}\big(\sqrt{\la_n}(x- x_{\la_n}^{(j)})\big)+ \omega^{(j)}_{\la_n},\,j=1,2, \]
where  $x_{\la_n}^{(j)} \to b_{0}$ as $n\to +\infty$. For the simplicity of the notations, we drop the subscript $n$.

Let
\[\eta_{\la}(x) = \frac{u_{\la}^{(1)}(x)-u_{\la}^{(2)}(x)}{\|u_{\la}^{(1)}-u_{\la}^{(2)}\|_{L^{\infty}(\mathbb{R}^{3})}}. \]
Then $\|\eta_{\la}\|_{L^{\infty}(\mathbb{R}^{3})}=1$ and $\eta_{\la}(x)$ satisfies the following equation
\[ 
-\Delta \eta_{\la}  + \la  \eta_{\la} 
+ \Big(|x|^{-1}\ast \big[\big(u_{\la}^{(1)}+u_{\la}^{(2)}\big)\eta_{\la}\big]\Big) u_{\la}^{(1)} 
+ \Big(|x|^{-1} \ast \big(u_{\la}^{(2)}\big)^2 \Big) \eta_{\la} = V(x) f(p_{\va}, u_{\la}^{(1)}, u_{\la}^{(2)} \big) \eta_{\la}, \]
where
\[f( p_{\va}, u_{\la}^{(1)}, u_{\la}^{(2)}) = ( p_{\va}-1)\int_{0}^{1} \big(tu_{\la}^{(1)} + (1-t)u_{\la}^{(2)}\big)^{p_{\va}-2} \dt.\]
We aim to get a contradiction by proving $\eta_{\la}=o(1)$ in $\r^3$.
Firstly we estimate $\eta_{\la}$ in $\mathbb{R}^{3}\setminus B_{\frac{R}{\sqrt{\la}}}(x^{(1)}_{\la})$.

\begin{lemma}\label{3-4} Fix $d>0$ small. There exist constants $C>0$ and $\tau>0$, such that
	\begin{equation}\label{3-4-1}
		|\eta_{\la}(x)| \leq C e^{-\tau \sqrt{\la}|x- x_{\la}^{(1)}|},  \text{ for any } x \in \mathbb{R}^{3}\setminus B_{\frac{R}{\sqrt{\la}}}(x^{(1)}_{\la}),
	\end{equation}
	and
	\begin{equation}\label{3-4-2}
		|\nabla \eta_{\la}(x)| \leq C e^{-\tau \sqrt{\la}},  \text{ for any } x \in  \partial B_{d}(x^{(1)}_{\la}).
	\end{equation}
\end{lemma}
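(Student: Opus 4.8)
The plan is to read the equation satisfied by $\eta_\la$ as a scalar Schr\"odinger-type equation with mass $\la$ whose zeroth-order potential is exponentially small away from $x_\la^{(1)}$, and then run the standard barrier/comparison argument, exactly as in the pointwise decay estimate of Lemma~\ref{3-1} (see \cite{CPY}). The only genuinely new features are that all constants must be chosen uniformly in $\va\in(0,\va_0)$ and that the two nonlocal terms have to be disposed of. First I would rewrite the equation for $\eta_\la$ as
\begin{equation*}
-\Delta\eta_\la+\la\eta_\la+c_\la(x)\eta_\la=g_\la(x),\qquad
c_\la(x):=\big(|x|^{-1}\ast(u_\la^{(2)})^2\big)-V(x)f\big(p_\va,u_\la^{(1)},u_\la^{(2)}\big),
\end{equation*}
where the remaining nonlocal contribution
\[
g_\la(x):=-\big(|x|^{-1}\ast[(u_\la^{(1)}+u_\la^{(2)})\eta_\la]\big)\,u_\la^{(1)}(x)
\]
is treated as a source. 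Since $\|\eta_\la\|_{L^\infty(\r^3)}=1$, I can bound $g_\la$ by a fixed function: using $|\eta_\la|\le1$ together with a scaling of the Newtonian potential (equivalently the Hardy--Littlewood--Sobolev inequality \eqref{HLS}) gives $|x|^{-1}\ast(u_\la^{(1)}+u_\la^{(2)})\le C\la^{\frac{1}{p_\va-2}-1}$, so that Lemma~\ref{3-1} yields $|g_\la(x)|\le C\la^{\beta}e^{-(1-\theta)\sqrt\la\,|x-x_\la^{(1)}|}$ with $\beta:=\frac{2}{p_\va-2}-1$; note $\beta<1$ because $p_\va>3$.

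Next I would verify that the effective potential is harmless on the exterior region $\r^3\setminus B_{R/\sqrt\la}(x_\la^{(1)})$ for $R$ large but fixed. The nonlocal coefficient $|x|^{-1}\ast(u_\la^{(2)})^2\ge0$ only helps the comparison principle, while from the definition of $f$ and Lemma~\ref{3-1} one has $V(x)f(p_\va,u_\la^{(1)},u_\la^{(2)})\le C\la\,e^{-(p_\va-2)(1-\theta)\sqrt\la\,|x-x_\la^{(1)}|}$, the very fast decay $\sqrt\la\to+\infty$ overwhelming the growth of $V$. Choosing $R$ large, independently of $\la$ and $\va$, therefore forces $Vf\le\frac12\la$ throughout the exterior, so $\la+c_\la\ge\frac12\la>0$ there and $-\Delta+\la+c_\la$ obeys the maximum principle on that domain (recall $\eta_\la\to0$ at infinity, being a normalized difference of two exponentially decaying solutions).

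Then I would compare $\eta_\la$ with the barrier $\Psi(x)=e^{\tau R}e^{-\tau\sqrt\la\,|x-x_\la^{(1)}|}$ for a fixed $\tau\in\big(0,\min\{1/\sqrt2,\,1-\theta\}\big)$. A direct computation of the radial Laplacian gives, for $|x-x_\la^{(1)}|\ge R/\sqrt\la$,
\[
-\Delta\Psi+\la\Psi+c_\la\Psi\ \ge\ \Big(\tfrac12-\tau^2\Big)\la\,\Psi,
\]
and since $\tau<1-\theta$ and $\beta<1$, the right-hand side dominates $|g_\la|$ once $\la$ is large. On the sphere $\partial B_{R/\sqrt\la}(x_\la^{(1)})$ one has $\Psi\ge1\ge|\eta_\la|$, so applying the maximum principle to $\Psi\pm\eta_\la$ yields \eqref{3-4-1}. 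Finally \eqref{3-4-2} follows from \eqref{3-4-1}: on the fixed sphere $\partial B_d(x_\la^{(1)})$ the estimate \eqref{3-4-1} already gives $|\eta_\la|\le Ce^{-\tau\sqrt\la\,d}$, and the coefficients $c_\la$ and $g_\la$ are likewise exponentially small there, so interior elliptic gradient estimates on balls of fixed radius centered at points of $\partial B_d(x_\la^{(1)})$ produce $|\nabla\eta_\la|\le Ce^{-\tau\sqrt\la}$.

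I expect the main obstacle to be the first nonlocal term: it is genuinely nonlocal in $\eta_\la$ and cannot be folded into the zeroth-order coefficient $c_\la$. The resolution is to exploit the a priori normalization $\|\eta_\la\|_{L^\infty}=1$ in order to majorize it by the explicit exponentially decaying source $g_\la$, after which the problem collapses to the familiar scalar Schr\"odinger decay estimate. The only other delicate point is quantitative bookkeeping---checking that $R$, $\tau$, and all implied constants can be taken independent of $\va$---which is precisely where the uniform control of $Q_{p_\va}$ from Lemma~\ref{4-1} and of $u_\la^{(j)}$ from Lemma~\ref{3-1} is used.
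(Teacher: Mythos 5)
Your proposal is correct and follows essentially the same route as the paper: the three displayed bounds in the paper's proof are exactly your estimates on $V f/\la$, on the coefficient $\la^{-1}\big(|x|^{-1}\ast (u_\la^{(2)})^2\big)$, and on the nonlocal source $\la^{-1}\big(|x|^{-1}\ast[(u_\la^{(1)}+u_\la^{(2)})\eta_\la]\big)u_\la^{(1)}$ obtained from $\|\eta_\la\|_{L^\infty}=1$ and Lemma~\ref{3-1}, after which both arguments conclude by the comparison principle with an exponential barrier and interior gradient estimates. Your write-up merely makes explicit the barrier computation and the uniformity in $\va$ that the paper leaves implicit.
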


\begin{proof}
	Note that for any small number $\tau>0$, there is  $R>0$ large, such that
	\[
	u_{\la}^{(j)}\le \la^{\frac{1}{p_\va-2}}\tau, \text{ in }  \mathbb{R}^{3}\setminus B_{\frac{R}{\sqrt{\la}}}(x^{(j)}_{\la}).
	\]
	From $|x^{(j)}_{\la} - b_{0}| = o(\la^{-\frac12})$, we can deduce
	\[
	u_{\la}^{(j)}\le \la^{\frac{1}{p_\va-2}}\tau, \text{ in }  \mathbb{R}^{3}\setminus B_{\frac{R}{\sqrt{\la}}}(x^{(1)}_{\la}).
	\]
	This gives that 
	\[
	\frac{1}{\la}V(x)f( p_{\va}, u_{\la}^{(1)}, u_{\la}^{(2)}) \le \frac14,  \text{ in }  \mathbb{R}^{3}\setminus B_{\frac{R}{\sqrt{\la}}}(x^{(1)}_{\la}),
	\]
	\[
	\frac{1}{\la} \Big(|x|^{-1} \ast (u_{\la}^{(2)})^2\Big) \leq \la^{\frac{2}{p_\va-2}-2} \int_{\mathbb{R}^{3}} \frac{1}{|x-y|}Q_{p_\va}^2(y) \dy \leq \frac14, \text{ for } \la \text{ large },
	\]
	and 
	\[
	\frac{1}{\la}
	\Big| \Big(|x|^{-1}\ast \big[\big(u_{\la}^{(1)}+u_{\la}^{(2)}\big)\eta_{\la}\big]\Big) u_{\la}^{(1)} \Big|
	\leq  e^{-\tau \sqrt{\la}|x- x_{\la}^{(1)}|} ,  \text{ in }  \mathbb{R}^{3}\setminus B_{\frac{R}{\sqrt{\la}}}(x^{(1)}_{\la}).
	\]
	Hence, by using the comparison principle,  we can prove \eqref{3-4-1} and \eqref{3-4-2}.
\end{proof}

Next we estimate $\eta_{\la}$ in $B_{\frac{R}{\sqrt{\la}}}(x^{(1)}_{\la})$.
We want to prove that $\eta_{\la} =o(1)$ in $B_{\frac{R}{\sqrt{\la}}}(x^{(1)}_{\la})$.  For this purpose, we define
\[\tilde{\eta}_{\la}(x) = \eta_{\la} \Big(\frac{x}{\sqrt{\la}} + x_{\la}^{(1)}\Big). \]
We have the following result.
\begin{lemma}\label{3-5} It holds that  as  $\la \to \infty$, $\va \to 0$,
	\[\tilde{\eta}_{\la} \to \sum_{j=1}^3 c_{j}\frac{\partial Q_{\bar{p}}}{\partial x_{j}}, \quad \text{in}\;  C^2_{loc}(\mathbb R^3) ,\]
	for some constants $c_{j}$, $j=1,2, 3$.
\end{lemma}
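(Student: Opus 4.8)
The plan is to rescale the difference quotient $\eta_\la$ around the center $x_\la^{(1)}$, extract a $C^2_{loc}$ limit using the normalization $\|\eta_\la\|_{L^\infty(\r^3)}=1$ together with elliptic regularity, and identify the limit as an element of the kernel of the linearized operator at $Q_{\bar p}$. First I would write down the equation for $\tilde\eta_\la$: substituting $x=\tfrac{y}{\sqrt\la}+x_\la^{(1)}$ in the equation satisfied by $\eta_\la$ and dividing by $\la$ gives
\[
-\Delta \tilde\eta_\la + \tilde\eta_\la = \tilde c_\la(y)\,\tilde\eta_\la - \tilde P_\la(y),
\]
where $\tilde c_\la(y)=\tfrac1\la\,V\big(\tfrac{y}{\sqrt\la}+x_\la^{(1)}\big) f\big(p_\va,u_\la^{(1)},u_\la^{(2)}\big)\big(\tfrac{y}{\sqrt\la}+x_\la^{(1)}\big)$ is the rescaled nonlinear coefficient and $\tilde P_\la$ collects the two rescaled Poisson contributions. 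The normalization yields $\|\tilde\eta_\la\|_{L^\infty(\r^3)}=1$, and Lemma~\ref{3-4} provides the uniform decay $|\tilde\eta_\la(y)|\le C e^{-\tau|y|}$ for $|y|\ge R$, which will force the limit to lie in $H^1(\r^3)$.

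Next I would establish compactness. On any fixed ball the coefficient $\tilde c_\la$ is uniformly bounded and $\tilde P_\la$ is uniformly bounded, so together with $\|\tilde\eta_\la\|_{L^\infty}=1$ the right-hand side is bounded in $L^\infty_{loc}$. By $L^q$ and Schauder estimates, $\tilde\eta_\la$ is bounded in $C^{2,\al}_{loc}(\r^3)$; hence, along a subsequence still denoted the same way, $\tilde\eta_\la\to\tilde\eta$ in $C^2_{loc}(\r^3)$, with $|\tilde\eta(y)|\le Ce^{-\tau|y|}$.

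Then I would pass to the limit in the coefficients. Using $x_\la^{(j)}\to b_0$, the improved location estimate of Lemma~\ref{3-3} (so that $\sqrt\la\,|x_\la^{(1)}-x_\la^{(2)}|=o(1)$ and the two rescaled bubbles share the same center), and the profile convergence $Q_{p_\va}\to Q_{\bar p}$ of Lemma~\ref{4-1}, one checks that $\tilde c_\la\to(\bar p-1)Q_{\bar p}^{\bar p-2}$ in $L^\infty_{loc}$: the powers of $\la$ inside $f$ exactly cancel the prefactor $\tfrac1\la$, while $V\to V_0$ cancels the $V_0$ built into $U_{x_\la,p_\va}$. For the nonlocal part, the scaling computations behind Lemmas~\ref{lemma1} and \ref{2-2} and the Hardy--Littlewood--Sobolev inequality show each rescaled Poisson term is $O\big(\la^{\frac{2}{p_\va-2}-2}\big)=O(\la^{-1/2})\to0$. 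Therefore the limit satisfies
\[
-\Delta\tilde\eta+\tilde\eta=(\bar p-1)Q_{\bar p}^{\bar p-2}\tilde\eta \quad \text{in } \r^3.
\]
Since the decay places $\tilde\eta$ in $H^1(\r^3)$, the non-degeneracy of $Q_{\bar p}$ implies that the kernel of $-\Delta+1-(\bar p-1)Q_{\bar p}^{\bar p-2}$ is spanned by $\partial Q_{\bar p}/\partial x_j$, whence $\tilde\eta=\sum_{j=1}^3 c_j\,\partial Q_{\bar p}/\partial x_j$ for some constants $c_j$, as claimed.

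The hard part will be controlling the nonlinear coefficient uniformly when the exponent $p_\va$ itself varies with $\va\to0$. Unlike the fixed-exponent setting, verifying $\tilde c_\la\to(\bar p-1)Q_{\bar p}^{\bar p-2}$ on compact sets requires Lemma~\ref{4-1} to transfer the convergence $Q_{p_\va}\to Q_{\bar p}$ through the $p_\va$-th power, and one must simultaneously track that both $\tfrac1\la$-weighted Poisson terms are genuinely of lower order; these two estimates are precisely what make the limiting equation collapse to the clean linearized form above.
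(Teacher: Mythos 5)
Your proposal is correct and follows essentially the same route as the paper: rescale $\eta_\la$ about $x_\la^{(1)}$, use $\|\tilde\eta_\la\|_{L^\infty}=1$ and elliptic estimates for $C^2_{loc}$ compactness, check that the rescaled Poisson terms are $O(\la^{2/(p_\va-2)-2})=O(\la^{-1/2})$ and that the nonlinear coefficient converges to $(\bar p-1)Q_{\bar p}^{\bar p-2}$ via $V\to V_0$ and Lemma~\ref{4-1}, and conclude by the non-degeneracy of $Q_{\bar p}$. Your version is in fact slightly more careful than the paper's (which compresses the coefficient convergence into one line, with what appears to be a stray factor $V_0$ in its limiting equation), since you record the exponential decay needed to place $\tilde\eta$ in $H^1(\r^3)$ before invoking non-degeneracy.
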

\begin{proof} It is easily verified that $\tilde{\eta}_{\la}$ satisfies
	\begin{equation}\label{3-5-1}
		\begin{split}
			&-\Delta \tilde{\eta}_{\la} +  \tilde{\eta}_{\la}
			+ \frac{1}{\la^2}  \Big[|x|^{-1}\ast \Big(u_{\la}^{(2)}\big(\frac{x}{\sqrt{\la}} + x_{\la}^{(1)}\big) \Big)^2\Big] \tilde{\eta}_\la \\
			&+ \frac{1}{\la^2}  \Big[|x|^{-1}\ast \Big(u_{\la}^{(1)}\big(\frac{x}{\sqrt{\la}} + x_{\la}^{(1)}\big) + u_{\la}^{(2)}\big(\frac{x}{\sqrt{\la}} + x_{\la}^{(1)}\big) \Big) \tilde{\eta}_\la \Big] u_{\la}^{(1)}\big(\frac{x}{\sqrt{\la}} + x_{\la}^{(1)}\big)  \\
			&=\frac{1}{\la} V\big(\frac{x}{\sqrt{\la}} + x_{\la}^{(1)}\big) f\Big(p_{\va}, u_{\la}^{(1)}\big(\frac{x}{\sqrt{\la}} + x_{\la}^{(1)}\big), u_{\la}^{(2)}\big(\frac{x}{\sqrt{\la}} + x_{\la}^{(1)}\big)\Big)\tilde{\eta}_{\la}, 
			\quad \text{ in } \mathbb{R}^{3}.
		\end{split}
	\end{equation}
	Then we obtain that
	\begin{equation}\label{3-5-2}
		\begin{split}
			-\Delta \tilde{\eta}_{\la} +  \tilde{\eta}_{\la} 
			= (p_{\va}-1) V_0 Q^{p_{\va}-2}_{p_{\va}} \tilde{\eta}_{\la} + o_{\la}(1) 
			= (\bar{p}-1) V_0 Q_{\bar{p}}^{\bar{p}-2} \tilde{\eta}_{\la} +o_\va(1) +o_\la(1),
			 \text{ in } \mathbb{R}^{3}.
		\end{split}
	\end{equation}
	Since $|\tilde{\eta}_{\la}|\le 1$,    $\tilde{\eta}_{\la} \to \tilde{\eta}$ in $C^{2}(B_{R}(0))$ for any $R>0$ and $\tilde{\eta}$ satisfies the equation
	\[-\Delta \tilde{\eta} + \tilde{\eta} =(\bar{p}-1) V_0 Q_{\bar{p}}^{\bar{p}-2} \tilde{\eta}, \text{ in  } \mathbb{R}^{3}. \]
	Hence we have
	\[ \tilde{\eta} = \sum_{j=1}^{3}c_{j} \frac{\partial Q_{\bar{p}}}{\partial x_{j}},\]
	by the non-degeneracy of $Q_{\bar{p}}$.
\end{proof}

\begin{lemma}\label{3-51}
	\[ \eta_{\la}(x) =o(1) \text{  in  } B_{\frac{R}{\sqrt{\la}}}(x^{(1)}_{\la}).\]
\end{lemma}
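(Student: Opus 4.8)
The plan is to reduce the entire statement to showing that the constants $c_1,c_2,c_3$ produced in Lemma~\ref{3-5} all vanish: once $c_j=0$ for $j=1,2,3$ the limit in Lemma~\ref{3-5} is zero, so the $C^2_{loc}$ convergence $\tilde\eta_\la\to\sum_{j=1}^3 c_j\frac{\partial Q_{\bar{p}}}{\partial x_j}=0$ gives $\sup_{B_R(0)}|\tilde\eta_\la|=o(1)$, which is precisely $\eta_\la=o(1)$ on $B_{R/\sqrt\la}(x^{(1)}_\la)$. To produce the relations determining the $c_j$, I would apply the local Pohozaev identity \eqref{Pohozaev} to each of $u^{(1)}_\la$ and $u^{(2)}_\la$ over the \emph{common} ball $B_d(x^{(1)}_\la)$ (legitimate since each $u^{(j)}_\la$ solves \eqref{lambda} in all of $\r^3$ and $|x^{(1)}_\la-x^{(2)}_\la|=o(\la^{-1/2})\ll d$), subtract the two identities, and divide by $\|u^{(1)}_\la-u^{(2)}_\la\|_{L^\infty(\r^3)}$ so that every difference quotient turns into $\eta_\la$.

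On the right-hand side, after division each boundary integral over $\partial B_d(x^{(1)}_\la)$ is built from $\nabla\eta_\la$, $\eta_\la$ and the traces of $u^{(j)}_\la,\nabla u^{(j)}_\la$, all of which are $O(e^{-\tau\sqrt\la})$ by the decay estimates of Lemma~\ref{3-1} and Lemma~\ref{3-4}. The delicate object is the nonlocal bulk term $\frac12\int_{B_d}\int_{\r^3}\frac{x_j-y_j}{|x-y|^3}u^2(y)\dy\,u^2(x)\dx$; the plan is to use that the full double integral over $\r^3\times\r^3$ vanishes by the antisymmetry of the kernel $\frac{x_j-y_j}{|x-y|^3}$ (exactly as in \eqref{symetry} and \eqref{3-3-1}), so that the $B_d$-integral equals minus its counterpart over $(\r^3\setminus B_d)\times\r^3$, where $u^{(j)}_\la(x)$ is exponentially small; expressing the difference of the two quartic terms through $\eta_\la$ then shows this contribution is again $O(e^{-\tau\sqrt\la})$ after division. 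This antisymmetry-plus-decay step is what confines the argument to \emph{single-peak} solutions, and I expect it to be the main obstacle.

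On the left-hand side, after division the term becomes
\[
\int_{B_d(x^{(1)}_\la)}\frac{\partial V}{\partial x_j}\Big[\int_0^1\big(tu^{(1)}_\la+(1-t)u^{(2)}_\la\big)^{p_\va-1}\dt\Big]\eta_\la\dx,
\]
the prefactor $1/p_\va$ cancelling the $p_\va$ from the mean value representation of $(u^{(1)}_\la)^{p_\va}-(u^{(2)}_\la)^{p_\va}$. I would Taylor-expand $\frac{\partial V}{\partial x_j}$ about $x^{(1)}_\la$; by Lemma~\ref{3-3} the zeroth order term $\frac{\partial V(x^{(1)}_\la)}{\partial x_j}=o(\la^{-1/2})$ is negligible, so the leading contribution comes from the Hessian term $\frac1{\sqrt\la}\sum_k\frac{\partial^2 V(x^{(1)}_\la)}{\partial x_j\partial x_k}(x-x^{(1)}_\la)_k$. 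After rescaling $x=\la^{-1/2}y+x^{(1)}_\la$, replacing the bracket by its leading profile $(\la/V_0)^{(p_\va-1)/(p_\va-2)}Q_{p_\va}^{p_\va-1}(y)$, and passing to the limit via Lemma~\ref{3-5} and Lemma~\ref{4-1}, the leading coefficients reduce to $\int_{\r^3}y_k Q_{\bar{p}}^{\bar{p}-1}\frac{\partial Q_{\bar{p}}}{\partial x_m}\dy$. Using $Q_{\bar{p}}^{\bar{p}-1}\frac{\partial Q_{\bar{p}}}{\partial x_m}=\frac1{\bar{p}}\frac{\partial}{\partial x_m}\big(Q_{\bar{p}}^{\bar{p}}\big)$ and integrating by parts gives $\int_{\r^3}y_k Q_{\bar{p}}^{\bar{p}-1}\frac{\partial Q_{\bar{p}}}{\partial x_m}\dy=-\frac{\delta_{km}}{\bar{p}}\int_{\r^3}Q_{\bar{p}}^{\bar{p}}\dy$, so that, after dividing both sides by the resulting common nonzero factor and letting $\la\to\infty$, the identity collapses to
\[
\sum_{k=1}^3\frac{\partial^2 V(b_0)}{\partial x_j\partial x_k}\,c_k=0,\qquad j=1,2,3.
\]
Because $b_0$ is a non-degenerate critical point of $V$, the Hessian $\big(\tfrac{\partial^2 V(b_0)}{\partial x_j\partial x_k}\big)_{j,k}$ is invertible, which forces $c_1=c_2=c_3=0$ and hence the limit in Lemma~\ref{3-5} is zero; as explained at the outset this yields $\eta_\la=o(1)$ on $B_{R/\sqrt\la}(x^{(1)}_\la)$.
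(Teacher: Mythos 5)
Your proposal is correct and follows essentially the same route as the paper: apply the local Pohozaev identity to both solutions on $B_d(x^{(1)}_\la)$, use the antisymmetry of the kernel $\frac{x_j-y_j}{|x-y|^3}$ to push the nonlocal bulk term to $\r^3\setminus B_d(x^{(1)}_\la)$ where the exponential decay of Lemmas~\ref{3-1} and \ref{3-4} makes the whole right-hand side $O(e^{-\tau\sqrt\la})$, then Taylor-expand $\partial V/\partial x_j$ on the left (the zeroth-order term being negligible by Lemma~\ref{3-3}), pass to the limit via Lemma~\ref{3-5}, and use $\int_{\r^3}x_hQ_{\bar p}^{\bar p-1}\partial_{x_h}Q_{\bar p}\dx=-\frac1{\bar p}\int_{\r^3}Q_{\bar p}^{\bar p}\dx$ together with the non-degeneracy of the Hessian of $V$ at $b_0$ to force $c_1=c_2=c_3=0$. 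This is exactly the paper's argument (its display \eqref{Pohozaev0} is precisely your antisymmetry reduction), so no further comparison is needed.
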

\begin{proof}
	By Lemma \ref{3-5}, we only need to prove $c_{j} =0$,  $j =1, 2, 3$.  Thanks to the symmetry, \eqref{Pohozaev} can be written as
	\begin{equation}\label{Pohozaev0}
		\begin{split}
			&\frac{1}{p_\va} \int_{ B_{d}(x_{\la})} \frac{\partial V(x)}{\partial x_j} u^{p_\va} \dx  \\ 
			=  {}& \int_{\partial B_{d}(x_{\la})} \frac{\partial u}{\partial \nu} \frac{\partial u}{\partial x_{j}} \dsi 
			-\frac12  \int_{\partial B_{d}(x_{\la})} |\nabla u|^2 \nu_j \dsi
			- \frac{\la }{2}\int_{\partial B_{d}(x_{\la})} u^2 \nu_j \dsi
			+ \frac{1}{p_\va} \int_{\partial B_{d}(x_{\la})} V(x) u^{p_\va} \nu_j \dsi  \\
			{}&- \frac12 \int_{\partial B_{d}(x_{\la})}\int_{\r^3} \frac{u^2(y)}{|x-y|} \dy \cdot u^2(x) \nu_j \dsi 
			+\frac12 \int_{\r^3\setminus B_{d}(x_{\la})}\int_{\r^3} \frac{x_j-y_j}{|x-y|^3} u^2(y) \dy \cdot u^2(x) \dx . 
		\end{split}
	\end{equation}
	Applying \eqref{Pohozaev0} to $u_{\la}^{(1)}$ and $u_{\la}^{(2)}$, we find
	\begin{equation}\label{p2}\begin{split}
			&\frac{1}{p_{\va} } \int_{ B_{d}(x^{(1)}_{\la})} \frac{\partial V(x)}{\partial x_j} F(p_{\va}, u_{\la}^{(1)}, u_{\la}^{(2)})  \eta_{\la}   \dx \\
			=  {}& \int_{\partial B_{d}(x^{(1)}_{\la})} \frac{\partial  u_{\la}^{(2)}}{\partial \nu} \frac{\partial \eta_{\la}}{\partial x_{j}} \dsi
			+ \int_{\partial B_{d}(x^{(1)}_{\la})} \frac{\partial  \eta_{\la}}{\partial \nu} \frac{\partial u_{\la}^{(1)}}{\partial x_{j}} \dsi
			- \frac{1}{2}  \int_{\partial B_{d}(x^{(1)}_{\la})} \nabla (u_{\la}^{(1)} + u_{\la}^{(2)}) \nabla \eta_{\la} \nu_{j}  \dsi \\
			{}&  - \frac{\la}2 \int_{\partial B_{d}(x^{(1)}_{\la})}  \big(u_{\la}^{(1)} + u_{\la}^{(2)}\big) \eta_{\la}  \nu_{j} \dsi 
			+ \frac{1}{p_{\va} } \int_{\partial B_{d}(x^{(1)}_{\la})} V(x) F\big(p_{\va}, u_{\la}^{(1)}, u_{\la}^{(2)}\big)  \eta_{\la}  \nu_{j} \dsi \\
			{}&-\frac12 \int_{\partial B_{d}(x^{(1)}_{\la})} \int_{\r^3} \frac{\big(u_{\la}^{(1)}(y)+u_{\la}^{(2)}(y) \big) \eta_{\la}(y)}{|x-y|} \dy \cdot  \big(u_{\la}^{(1)}(x) \big)^2 \nu_j \dsi \\
			{}&-\frac12 \int_{\partial B_{d}(x^{(1)}_{\la})} \int_{\r^3} \frac{\big(u_{\la}^{(2)}(y)\big)^2 }{|x-y|} \dy \cdot  \big(u_{\la}^{(1)}(x)+u_{\la}^{(2)}(x) \big) \eta_{\la}(x) \nu_j \dsi, \\
			{}&+\frac12 \int_{\r^3\setminus B_{d}(x^{(1)}_{\la})} \int_{\r^3} \frac{x_j-y_j}{|x-y|^3}\big(u_{\la}^{(1)}(y)+u_{\la}^{(2)}(y) \big) \eta_{\la}(y) \dy \cdot  \big(u_{\la}^{(1)}(x) \big)^2  \dx \\
			{}&+\frac12 \int_{\r^3\setminus B_{d}(x^{(1)}_{\la})} \int_{\r^3} \frac{x_j-y_j}{|x-y|^3} \big(u_{\la}^{(2)}(y)\big)^2\dy \cdot  \big(u_{\la}^{(1)}(x)+u_{\la}^{(2)}(x) \big) \eta_{\la}(x)  \dx \\
		\end{split}
	\end{equation}
	where
	\[ F\big(p_{\va}, u_{\la}^{(1)}, u_{\la}^{(2)}\big)= \frac{\big(u_{\la}^{(1)}\big)^{p_{\va}} - \big(u_{\la}^{(2)}\big)^{p_{\va}} }{u_{\la}^{(1)} - u^{(2)}_{\la}} = p_{\va} \int_{0}^{1}\big(tu_{\la}^{(1)} +(1-t)u_{\la}^{(2)}\big)^{p_{\va}-1}\dt.  \]
	It follows from Lemma~\ref{3-4}  that
	\begin{equation}\label{rhs}
		\text{RHS of \eqref{p2}} =  O(e^{-\theta_{2} \sqrt{\la}}),
	\end{equation}
	where $\theta_{2}> 0$ is a small constant. 
	By the Taylor's expansion and Lemma \ref{3-3}, we have
	\begin{equation}\label{lhs}
		\begin{split}
			\text{LHS of \eqref{p2}} ={}& \frac{1}{p_{\va} } \la^{-\frac32} \int_{ B_{\sqrt{\la}d}(x^{(1)}_{\la})} \frac{\partial V\big(\frac{x}{\sqrt{\la}}+x_{\la}^{(1)}\big)}{\partial x_j} F\Big(p_{\va}, u_{\la}^{(1)}\big(\frac{y}{\sqrt{\la}}+x_{\la}^{(1)}\big), u_{\la}^{(2)}\big(\frac{y}{\sqrt{\la}}+x_{\la}^{(1)}\big)\Big) \tilde{\eta}_{\la}   \dx \\
			={}&  \frac{1}{p_{\va}} \la^{-\frac32} \frac{\partial V(x_{\la}^{(1)})}{\partial x_j}  \int_{ B_{d}(x^{(1)}_{\la})} F\Big(p_{\va}, u_{\la}^{(1)}\big(\frac{y}{\sqrt{\la}}+x_{\la}^{(1)}\big), u_{\la}^{(2)}\big(\frac{y}{\sqrt{\la}}+x_{\la}^{(1)}\big)\Big) \tilde{\eta}_{\la}   \dx \\
			{}& + \frac{1}{p_{\va}} \la^{-2}  \sum_{h=1}^3 \int_{ B_{d}(x^{(1)}_{\la})} \frac{\partial^2 V(x_{\la}^{(1)})}{\partial x_j\partial x_h} x_h F\Big(p_{\va}, u_{\la}^{(1)}\big(\frac{y}{\sqrt{\la}}+x_{\la}^{(1)}\big), u_{\la}^{(2)}\big(\frac{y}{\sqrt{\la}}+x_{\la}^{(1)}\big)\Big) \tilde{\eta}_{\la}   \dx \\
			{}& + O(\la^{\frac{1}{p_\va-2}-\frac32}) \\
			={}& \frac{1}{p_{\va}} \la^{-2}  \sum_{h=1}^3 \int_{ B_{d}(x^{(1)}_{\la})} \frac{\partial^2 V(x_{\la}^{(1)})}{\partial x_j\partial x_h} x_h F\Big(p_{\va}, u_{\la}^{(1)}\big(\frac{y}{\sqrt{\la}}+x_{\la}^{(1)}\big), u_{\la}^{(2)}\big(\frac{y}{\sqrt{\la}}+x_{\la}^{(1)}\big)\Big) \tilde{\eta}_{\la}   \dx \\
			{}&+o(\la^{\frac{1}{p_\va-2}-1}).\\
		\end{split}
	\end{equation}
	Combining \eqref{rhs} and \eqref{lhs} yields
	\begin{equation}\label{phozaev}
		\frac{1}{p_{\va}} \la^{-2}  \sum_{h=1}^3 \int_{ B_{d}(x^{(1)}_{\la})} \frac{\partial^2 V(x_{\la}^{(1)})}{\partial x_j\partial x_h} x_h F\Big(p_{\va}, u_{\la}^{(1)}\big(\frac{y}{\sqrt{\la}}+x_{\la}^{(1)}\big), u_{\la}^{(2)}\big(\frac{y}{\sqrt{\la}}+x_{\la}^{(1)}\big)\Big) \tilde{\eta}_{\la}   \dx
		=o(\la^{\frac{1}{p_\va-2}-1}).
	\end{equation}
	Taking the limit in \eqref{phozaev}, we obtain from Lemma \ref{3-5} that
	\begin{equation}
		\sum_{h=1}^3 \int_{\r^3} \frac{\partial^2 V(b_0)}{\partial x_j\partial x_h} x_h Q_{\bar{p}}^{\bar{p}-1} \sum_{l=1}^3c_l \frac{\partial Q_{\bar{p}}}{\partial x_l} \dx=0.
	\end{equation}
	By the symmetry, we have
	\begin{equation}
		\sum_{h=1}^3 c_h \frac{\partial^2 V(b_0)}{\partial x_j\partial x_h} \int_{\r^3} x_h Q_{\bar{p}}^{\bar{p}-1}  \frac{\partial Q_{\bar{p}}}{\partial x_l} \dx =0.
	\end{equation}
	Since $b_0$ is a non-degenerate critical point of $V(x)$ and 
	\begin{equation*}
		\int_{\r^3}x_h Q_{\bar{p}}^{\bar{p}-1}  \frac{\partial Q_{\bar{p}}}{\partial x_l}  \dx
		=-\frac{1}{\bar{p}} \int_{\r^3} Q_{\bar{p}}^{\bar{p}} \dx <0,
	\end{equation*}
	we obtain $c_h=0$, for $h=1,2,3$.
\end{proof}

\begin{proof} [\bf Proof of Theorem~\ref{3-7}.]
	We accomplish the proof by Lemmas~\ref{3-4} and \ref{3-51}.
\end{proof}

\section{Normalized  solutions}

\subsection{Existence of normalized  solutions}

In this subsection, we will prove Theorem \ref{ExistenceOfNormalized}.

In Section~2, we prove that there are $\va_0>0$ and $\la_0>0$, such that
for any $\va\in (0, \va_0)$ and $\la>\la_0$, the problem
\[ -\Delta u + \lambda u +  \big(|x|^{-1} \ast |u|^{2}\big)u= V(x) u^{p_{\varepsilon}-1 } ,  \text{ in }  \r^{3},\]
has a unique solution
of the form
\begin{equation}\label{u-la}
	u_{\la} = U_{x_{\la},p_\va} + \omega_\la,
\end{equation}
where  $U_{x_{\la},p_\va}(x)=\big(\frac{\la}{V_0}\big)^{\frac{1}{p_{\va}-2}}Q_{p_\va} \big(\sqrt{\lambda} (x - x_{\la})\big),$ and $\omega_\la\in E_{\lambda ,p_\va} $. Moreover, $x_{\la}$ satisfies $|x_{\la}-b_0| = o(\la^{-\frac12})$ and
$\|\omega_{\la}\|_{\la}\leq C  \la^{\frac{3}{p_{\va}-2} -\frac{9}{4}}.$

Define 
\[\bar u_{\la} := \frac{\sqrt{a} u_{\la}}{\|u_{\la}\|_{L^{2}(\mathbb{R}^{3})}}
\,\text{ and } \,
f(\la) := \frac{ \| u_{\la}\|_{L^{2}(\mathbb{R}^{3})}^2}{a}.
\]
Then $\bar u_{\la}$ satisfies the equation
\begin{equation}\label{4-1-1}\left\{\begin{array}{lll}
-\Delta \bar u_{\la} + \lambda \bar u_{\la} 
+ f(\la) \big(|x|^{-1} \ast |\bar u_{\la}|^{2}\big)\bar u_{\la}
= V(x) \big(f(\la)\big)^{\frac{p_{\va}}{2}-1} \bar{u}_{\la}^{p_{\va} -1} , \text{ in } \r^{3},\\[2mm]
\int_{\r^3}|\bar u_{\la}|^{2}dx =a. 
\end{array}
\right.
\end{equation}

\begin{remark}
	Theorem \ref{3-7} is necessary to prove the existence of normalized. We aim to apply the intermediate value theorem to prove $f(\la)=1$ for some $\la$, which force that the continuity of $f(\la)$ comes first, so is the uniqueness of the solution $u_\la$ by the definition of $f(\la)$. In fact, for any $\tilde{\la}\in(\lambda_0, +\infty)$, 
	\[
	\lim_{\la\to\tilde{\la}} f(\la) = \frac{1}{a} \lim_{\la\to\tilde{\la}} \| u_{\la}\|_{L^{2}(\mathbb{R}^{3})}^2  = \frac{1}{a} \| u_{\tilde{\la}}\|_{L^{2}(\mathbb{R}^{3})}^2 = f(\tilde{\la}),
	\]
	thanks to the uniqueness of the solution $u_\la$. So $f(\la)$ is a continuous function in $(\lambda_0, +\infty)$.
\end{remark}

\begin{proof}[\bf Proof of Theorem \ref{ExistenceOfNormalized}.]
Our purpose is to prove $f(\la) =1 $ for some large $\lambda$. 

By calculating directly, we have
\begin{equation}\label{3-3-11}\begin{split}
f(\la)= {}&\frac{1}{a} \int_{\mathbb{R}^{3}} \bigl( U_{x_{\lambda},p_\va} +\omega_{\la} \bigr)^{2} \dx   \\
= {}&\frac{1}{a}\int_{\mathbb{R}^{3}} U_{x_{\la},p_\va} ^{2} \dx + O\Bigl( \|U_{x_{\la},p_\va}\|_{L^{2}(\mathbb{R}^{3})} \|\omega_{\la}\|_{L^{2}(\mathbb{R}^{3})} + \|\omega_{\la}\|^{2}_{L^{2}(\mathbb{R}^{3})}  \Bigr)\\
= {}& \frac{1}{a} \int_{\mathbb{R}^{3}} Q_{p_{\va}} ^{2}(x) \dx \cdot \la^{\frac{2}{p_{\va}-2}-\frac{3}{2}}  V_0^{-\frac{2}{p_{\va}-2}}  +  O\Bigl( \la^{\frac1{p_\va-2}-\frac{3}{4}} \|\omega_{\la}\|_{L^{2}(\mathbb{R}^{3})} + \|\omega_{\la}\|^{2}_{L^{2}(\mathbb{R}^{3})} \Bigr).
\end{split}
\end{equation}
By Lemma \ref{4-1},
 we have
\[
\| Q_{p_{\va}}(x) - Q_{\bar{p}}(x)\|_{L^2(\mathbb R^3)} \leq C \va
. \]
By Proposition \ref{2-4},
\[
\|\omega_{\la}\|_{L^{2}(\mathbb{R}^{3})} 
\le C\la^{-\frac12} \|\omega_{\la}\|_\la
\le C \la^{\frac{3}{p_\va-2} -\frac{11}{4}}.
\]
Then it follows that
\[
f(\la) = \frac{a_{*}}{V_0^{\frac32}a} \lambda^{\frac{2}{p_{\va}-2}-\frac{3}{2}}
\Bigl( 1+O(\va) \Bigr)
+  O\Bigl( \la^{\frac4{p_\va-2}-\frac{7}{2}} \Bigr). \]

In Case $(i)$, we have $  a<V_0^{-\frac32} a_{*}$ and $p_\va = \frac{10}{3} +\va$.
Hence,
\[\begin{split}
f\Big( e^{\frac{4}{9\va} \ln (V_0^{-\frac32} \frac{a_{*}}{a} ) }\Big) ={}& \biggl( \frac{a_{*}}{V_0^{\frac32}a} \biggr)^{1- \frac{2}{4+3\va} }
\Bigl( 1+O(\va) \Bigr) 
+  O\Bigl( \la^{\frac4{p_\va-2}-\frac{7}{2}} \Bigr) \\
={}& \biggl( \frac{a_{*}}{V_0^{\frac32}a} \biggr)^{\frac{1}{2} + O(\va)}
\Bigl( 1+O(\va) \Bigr) 
+  O\Bigl( \la^{\frac4{p_\va-2}-\frac{7}{2}} \Bigr)  > 1
\end{split}
  \]
and
\[\begin{split}
f\Big( e^{\frac{16}{9\va} \ln (V_0^{-\frac32}\frac{a_{*}}{a})}\Big) ={}& \biggl( \frac{a_{*}}{V_0^{\frac32}a} \biggr)^{1- \frac{8}{4+3\va} }
\Bigl( 1+O(\va) \Bigr) 
+  O\Bigl( \la^{\frac4{p_\va-2}-\frac{7}{2}} \Bigr) \\
={}& \biggl( \frac{a_{*}}{V_0^{\frac32}a} \biggr)^{-1+ O(\va)}
\Bigl( 1+O(\va) \Bigr) 
+  O\Bigl( \la^{\frac4{p_\va-2}-\frac{7}{2}} \Bigr)  < 1 .
\end{split}
  \]
Since $f(\la)$ is continuous in $(\lambda_0, +\infty)$, the intermediate value theorem shows that there exists $\la_{\va} \in \Big(e^{\frac{4}{9\va} \ln (V_0^{-\frac32}\frac{a_{*}}{a})}, e^{\frac{16}{9\va} \ln (V_0^{-\frac32}\frac{a_{*}}{a})}\Big)$, such that $f(\la_{\va })=1$. 

The result for Case $(ii)$ can be argued in a similar way.

Up to this point, we demonstrate the dependence of $\la$ and $\va$. Let us point out that $\lambda_\va\to +\infty$ as $\va\to 0$.  Thus problem \eqref{maineq} has a single-peak solution $u_\va$, such that as $\va\to0$,
$u_\va$ blows up at a critical point $b_0$ of $V(x)$. And the solution of form \eqref{u-la} can be rewritten as
\begin{equation}\label{60-21-5}
	u_{\va} =  U_{x_{\va},p_\va} + \omega_\va,
\end{equation}
 where \[U_{x_{\va},p_\va}(x)=\Big(\frac{\la_\va}{V_0}\Big)^{\frac{1}{p_{\va}-2}}Q_{p_\va} \big(\sqrt{\lambda_\va} (x - x_{\va})\big),\]
 and $ \omega_\va\in E_{\va}$, where
 \begin{equation}\label{2-e1}
 	E_{\va} := \biggl\{u \in H^{1}(\mathbb{R}^{3}): \langle u, \frac{\partial U_{x_{\va}, p_{\va}}}{\partial x_{j}}\rangle_{\la_\va}=0, j=1,2,3 \biggr\}.
 \end{equation}
We accomplish the proof of Theorem \ref{ExistenceOfNormalized}.
\end{proof}

\subsection{Local uniqueness of normalized solutions}

In last subsection, we have proven that for $\va>0$ small, problem \eqref{maineq} has a single-peak solution $u_\va$ of form \eqref{60-21-5}.

Lemmas \ref{2-2} and \ref{3-3} show that
\begin{equation}\label{62-21-5}
|x_{\va} - b_0| = o_\va(\la_\va^{-\frac12}),\quad \|\omega_{\va}\|_{\la_\va}\leq  C  \la_{\va}^{\frac{3}{p_{\va}-2} -\frac{9}{4}}.
\end{equation}

In this subsection, we will prove the local uniqueness of $u_\va$ of the form \eqref{60-21-5}, satisfying \eqref{62-21-5}.
Firstly, we clarify the dependence of $\la_\va$ and $\va$ more precisely.
\begin{lemma}\label{prop-lambda}
As $\va \to 0$,  it holds
\begin{equation}\label{lam1}
\lambda_{\varepsilon}=\Lambda_\va\bigg(1+O\Big(\frac{\ln \Lambda_\va}{\Lambda_\va^{\frac{7}{2}-\frac{4}{p_\va-2}}}\Big)\bigg),
\end{equation}
where
 \[\Lambda_\va:=\biggl(V_0^{\frac{2}{p_\va-2}}\frac{a}{a_{*,\va}} \bigg)^{\frac{2(p_{\va}-2)}{10-3p_\va}},\]
and
$$a_{*,\va}=\int_{\mathbb{R}^3}Q_{p_\va}^2\dx.$$
\end{lemma}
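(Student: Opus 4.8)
The plan is to pin down $\la_\va$ by solving the mass equation $f(\la_\va)=1$ up to the stated precision, where $f(\la)=\|u_\la\|_{L^2(\r^3)}^2/a$ as in the proof of Theorem~\ref{ExistenceOfNormalized}. First I would record the refined expansion of $f$, keeping the exact constant $a_{*,\va}=\int_{\r^3}Q_{p_\va}^2\dx$ rather than replacing it by $a_*$. Setting $\alpha_\va:=\frac{2}{p_\va-2}-\frac32=\frac{10-3p_\va}{2(p_\va-2)}$ and $\beta_\va:=\frac{a_{*,\va}}{a}V_0^{-\frac{2}{p_\va-2}}$, the computation \eqref{3-3-11} together with Lemma~\ref{4-1} and Proposition~\ref{2-4} gives
\[
f(\la)=\beta_\va\,\la^{\alpha_\va}\Bigl(1+O\bigl(\la^{\frac{2}{p_\va-2}-2}\bigr)\Bigr),
\]
the relative error arising from $O(\la^{\frac{1}{p_\va-2}-\frac34}\|\omega_\la\|_{L^2(\r^3)}+\|\omega_\la\|_{L^2(\r^3)}^2)$ divided by the leading term. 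The key algebraic observation is that $\Lambda_\va$ is precisely the root of the leading-order equation: since $\frac{2(p_\va-2)}{10-3p_\va}=\frac1{\alpha_\va}$ and $V_0^{\frac{2}{p_\va-2}}\frac{a}{a_{*,\va}}=\beta_\va^{-1}$, we have $\Lambda_\va=\beta_\va^{-1/\alpha_\va}$, i.e.\ $\beta_\va\Lambda_\va^{\alpha_\va}=1$.

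Next I would compare $\la_\va$ with $\Lambda_\va$. Dividing $f(\la_\va)=1=\beta_\va\Lambda_\va^{\alpha_\va}$ and taking logarithms yields
\[
\alpha_\va\ln\frac{\la_\va}{\Lambda_\va}=O\bigl(\la_\va^{\frac{2}{p_\va-2}-2}\bigr).
\]
Two facts then drive the estimate. The first is the mass-critical degeneracy: $\alpha_\va=\mp\frac98\va+O(\va^2)$ is small but nonzero, and taking the logarithm of $\Lambda_\va=\beta_\va^{-1/\alpha_\va}$ gives $\ln\Lambda_\va=-\frac1{\alpha_\va}\ln\beta_\va$. Since $\ln\beta_\va\to\ln\!\bigl(V_0^{-\frac32}\tfrac{a_*}{a}\bigr)\ne0$ under the hypotheses of Theorem~\ref{ExistenceOfNormalized} (both cases exclude $a=V_0^{-\frac32}a_*$), the amplification factor satisfies $\frac1{|\alpha_\va|}\sim\frac{\ln\Lambda_\va}{|\ln\beta_\va|}$, that is, $\frac1{\alpha_\va}=O(\ln\Lambda_\va)$. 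The second is that $\la_\va^{\alpha_\va}=O(1)$, bounded above and below, because $\beta_\va\la_\va^{\alpha_\va}(1+o(1))=1$ with $\beta_\va=O(1)$; consequently $\la_\va^{\frac{2}{p_\va-2}-2}=\la_\va^{\alpha_\va-\frac12}=O(\la_\va^{-\frac12})$.

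Combining these gives $\ln\frac{\la_\va}{\Lambda_\va}=O\bigl(\ln\Lambda_\va\cdot\la_\va^{-1/2}\bigr)$. A short bootstrap then closes the argument: the a priori bound $\la_\va\ge\Lambda_\va^{c}$ for a fixed $c>0$ (read off from the interval in Theorem~\ref{ExistenceOfNormalized}) first forces $\ln(\la_\va/\Lambda_\va)\to0$, hence $\la_\va/\Lambda_\va\to1$; feeding this back, $\la_\va^{-1/2}\sim\Lambda_\va^{-1/2}$, and since $\va\ln\Lambda_\va=O(1)$ one has $\Lambda_\va^{\mp\frac94\va}=O(1)$, so $\Lambda_\va^{-1/2}$ and $\Lambda_\va^{\frac4{p_\va-2}-\frac72}$ are comparable. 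This produces $\ln\frac{\la_\va}{\Lambda_\va}=O\bigl(\ln\Lambda_\va\cdot\Lambda_\va^{\frac4{p_\va-2}-\frac72}\bigr)$, and exponentiating gives exactly \eqref{lam1}.

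I expect the main obstacle to be controlling the degeneracy $\alpha_\va\to0$: the exponent of $\la$ in $f$ tends to the mass-critical value, so inverting the mass relation introduces the large factor $\frac1{\alpha_\va}\sim\ln\Lambda_\va$, and the whole point is to show that this blow-up is dominated by the rapidly decaying relative error $O(\la_\va^{-1/2})$. Keeping $a_{*,\va}$ exact (rather than $a_*$) and exploiting $\la_\va^{\alpha_\va}=O(1)$ to interchange the exponents $-\frac12$ and $\frac4{p_\va-2}-\frac72$ are the technical devices that make the bookkeeping work.
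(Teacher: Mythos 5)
Your proposal is correct and follows essentially the same route as the paper: expand $\|u_\la\|_{L^2}^2$ via \eqref{3-3-11}, Lemma~\ref{4-1} and Proposition~\ref{2-4}, impose the mass constraint to get $\la_\va^{\alpha_\va}=\beta_\va^{-1}\bigl(1+O(\la_\va^{\frac{4}{p_\va-2}-\frac72})\bigr)$, and invert, absorbing the amplification factor $\frac{1}{\alpha_\va}\sim\va^{-1}=O(\ln\Lambda_\va)$ into the error. Your write-up is in fact slightly more careful than the paper's at the final step, where it justifies explicitly (via the bootstrap $\la_\va/\Lambda_\va\to1$ and $\Lambda_\va^{\alpha_\va}=O(1)$) the replacement of $\la_\va$ by $\Lambda_\va$ in the error exponent and of $\va^{-1}$ by $\ln\Lambda_\va$, both of which the paper performs without comment.
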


\begin{proof}
Similar to \eqref{3-3-11}, we compute
\[
\begin{split}
\int_{\mathbb{R}^{3}} u_{\varepsilon}^{2} \dx
= {}&\int_{\mathbb{R}^{3}} U_{x_{\va},p_\va} ^{2} \dx + O\Bigl( \|U_{x_{\va},p_\va}\|_{L^{2}(\mathbb{R}^{3})} \|\omega_{\la}\|_{L^{2}(\mathbb{R}^{3})} + \|\omega_{\la}\|^{2}_{L^{2}(\mathbb{R}^{3})}  \Bigr)\\
= {}& \int_{\mathbb{R}^{3}} Q_{p_{\va}} ^{2}(x) \dx \cdot \la_\va^{\frac{2}{p_{\va}-2}-\frac{3}{2}}  V_0^{-\frac{2}{p_{\va}-2}}  
+  O\Bigl( \la_\va^{\frac4{p_\va-2}-\frac{7}{2}} \Bigr).
\end{split}
\]
Since $ \int_{\mathbb{R}^{3}} u_{\varepsilon}^{2} \dx=a$, we obtain
\begin{equation*}
\la_{\va}^{\frac{10-3p_\va}{2(p_{\va}-2)}} = V_0^{\frac{2}{p_\va-2}}\frac{a}{a_{*,\va}}  + O\Bigl( \la_\va^{\frac4{p_\va-2}-\frac{7}{2}} \Bigr).
\end{equation*}
Let \[\Lambda_\va:=\bigg(V_0^{\frac{2}{p_\va-2}}\frac{a}{a_{*,\va}} \bigg)^{\frac{2(p_{\va}-2)}{10-3p_\va}},\] we have
\[
\begin{aligned}
\la_{\va}=\Lambda_\va \Bigl[1+ O\Big( \la_\va^{\frac4{p_\va-2}-\frac{7}{2}} \Big) \Bigr]^{\frac{2(p_{\va}-2)}{10-3p_\va}}
=\Lambda_\va e^{\frac{2(p_{\va}-2)}{10- 3p_{\va}}\ln \Big(1+O\big(  \la_\va^{\frac4{p_\va-2}-\frac{7}{2}} \big)\Big)}
=\Lambda_\va \bigg(1+O\Big(\va^{-1} \la_\va^{\frac{4}{p_\va-2}-\frac{7}{2}}\Big)\bigg).
\end{aligned}
\]
Therefore,
\[\lambda_{\varepsilon}=\Lambda_\va\bigg(1+O\Big(\frac{\ln \Lambda_\va}{\Lambda_\va^{\frac{7}{2}-\frac{4}{p_\va-2} }}\Big)\bigg).\]
\end{proof}

Lemma \ref{prop-lambda} implies $\la_\va^\va$ tends to some positive constant $c_0$ as $\va\to0$. Revisit the proof of Lemma \ref{3-3}, then the equation \eqref{3-3-6} indicates that 
\begin{equation}\label{lambda4}
	|x_{\va} - b_0| = O(\la_\va^{-1}).
\end{equation}

 To prove the local uniqueness of the peak solution, we proceed by contradiction. Suppose $u_{\varepsilon}^{(1)}$ and $u_{\varepsilon}^{(2)}$ are
  two different solutions of \eqref{maineq}
  of the form \eqref{60-21-5}. We define
$$
\xi_{\varepsilon}=\frac{u_{\varepsilon}^{(1)}-u_{\varepsilon}^{(2)}}{\|u_{\varepsilon}^{(1)}-u_{\varepsilon}^{(2)}\|_{L^{\infty}(\mathbb{R}^{3})}},
$$
then $\xi_{\varepsilon}$ satisfies $\|\xi_{\varepsilon}\|_{L^{\infty}(\r^3)}=1$ and
\[
\begin{split}
	{}&-\Delta \xi_{\varepsilon}  + \la_\va^{(1)}  \xi_{\varepsilon} 
	+ \Big(|x|^{-1}\ast \big[\big(u_{\va}^{(1)}+u_{\va}^{(2)}\big)\xi_{\varepsilon}\big]\Big) u_{\va}^{(1)} 
	+ \Big(|x|^{-1} \ast \big(u_{\va}^{(2)}\big)^2\Big) \xi_{\varepsilon} \\
	={}& \frac{V(x)\Big[\big(u_{\varepsilon}^{(1)}\big)^{p_{\varepsilon}-1}-\big(u_{\varepsilon}^{(2)}\big)^{p_{\varepsilon}-1}\Big]-\big(\lambda_{\varepsilon}^{(1)}-\lambda_{\varepsilon}^{(2)}\big) u_{\varepsilon}^{(2)}}{\|u_{\varepsilon}^{(1)}-u_{\varepsilon}^{(2)}\|_{L^{\infty}(\mathbb{R}^{3})}}.
\end{split}
\]

Define
$$
C_{\varepsilon}(x):=\lambda_{\varepsilon}^{(1)} 
+ \Big(|x|^{-1} \ast \big(u_{\va}^{(2)}\big)^2\Big) \xi_{\varepsilon}  
-( p_{\varepsilon} -1) V(x) \int_{0}^{1}\big(t u_{\varepsilon}^{(1)}+(1-t) u_{\varepsilon}^{(2)}\big)^{p_{\varepsilon}-2} \dt ,
$$
and
$$
g_{\va}(x):=\frac{\lambda_{\varepsilon}^{(2)}-\lambda_{\varepsilon}^{(1)}}{\|u_{\varepsilon}^{(1)}-u_{\varepsilon}^{(2)}\|_{L^{\infty}(\mathbb{R}^{3})}} u_{\varepsilon}^{(2)}
- \Big(|x|^{-1}\ast \big[\big(u_{\va}^{(1)}+u_{\va}^{(2)}\big)\xi_{\varepsilon}\big]\Big) u_{\va}^{(1)} .
$$
Then
$$
-\Delta \xi_{\va}(x)+C_{\va}(x) \xi_{\va}(x) = g_{\va}(x).
$$

Let
\begin{equation}\label{0-23-5}
\bar{\xi}_{\varepsilon}(x)=\xi_{\varepsilon}\Big(\frac{x}
{\sqrt{\lambda_{\varepsilon}^{(1)}}}+x_{\varepsilon}^{(1)}\Big),
\end{equation}
then
\begin{equation}\label{1-23-5}
-\Delta \bar{\xi}_{\varepsilon}+\frac{1}{\lambda_{\varepsilon}^{(1)}} C_{\varepsilon}\Big(\frac{x}{\sqrt{\lambda_{\varepsilon}^{(1)}}}+x_{\varepsilon}^{(1)}\Big) \bar{\xi}_{\varepsilon}=\frac{1}{\lambda_{\varepsilon}^{(1)}} g_{\varepsilon}\Big(\frac{x}{\sqrt{\lambda_{\varepsilon}^{(1)}}}+x_{\varepsilon}^{(1)}\Big).
\end{equation}

We first find the main terms of $C_\va$ and $g_\va$ in \eqref{1-23-5}.

\begin{lemma}\label{blowup}
Let $d>0$ be a small constant.
For any $x\in B_{\sqrt{\la_\va^{(1)}}d}(0)$, there hold
\begin{equation}\label{5-4-5}
\begin{split}
\frac{1}{\la_{\va}^{(1)}} C_{\va}\Big(\frac{x}{\sqrt{\la_{\va}^{(1)}}}+x_{\va}^{(1)}\Big)
=& 1 - (p_\va-1)  Q_{p_\va}^{p_\va-2}(x)
+O\biggl(\frac{\ln \la_{\va}^{(1)}}{\sqrt{\la_{\va}^{(1)}}}\biggr),
\end{split}
\end{equation}
and
\begin{equation}\label{5-4-6}
\begin{split}
\frac{1}{\la_{\va}^{(1)} } g_{\va}\Big(\frac{x}{\sqrt{\lambda_{\va}^{(1)}}}+x_{\va}^{(1)}\Big)
=&-\frac{p_\va-2}{a_{*,\va}}Q_{p_\va}(x)  \int_{\mathbb{R}^{3}} Q_{p_{\va}}^{p_{\va}-1} \bar{\xi}_{\va} \dx 
+O\biggl(\frac{\ln \la_\va^{(1)}}{\sqrt{\la_\va^{(1)}}}Q_{p_{\va}}(x)
\biggr).
\end{split}
\end{equation}
\end{lemma}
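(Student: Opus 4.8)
The plan is to pass to the rescaled variable, so that the point $\frac{x}{\sqrt{\la_\va^{(1)}}}+x_\va^{(1)}$ at which $C_\va$ and $g_\va$ are evaluated becomes the origin-centered bubble, and then to insert the decomposition $u_\va^{(j)}=U_{x_\va^{(j)},p_\va}+\omega_\va^{(j)}$ and expand term by term, keeping the leading contribution and absorbing everything else into the stated error. At such points $U_{x_\va^{(j)},p_\va}$ equals $(\la_\va/V_0)^{1/(p_\va-2)}Q_{p_\va}(x)$ up to the $O(\la_\va^{-1/2})$ shift produced by $x_\va^{(1)}\neq x_\va^{(2)}$, since $|x_\va^{(j)}-b_0|=O(\la_\va^{-1})$ by \eqref{lambda4} (applied to each solution) forces $\sqrt{\la_\va^{(1)}}\,|x_\va^{(1)}-x_\va^{(2)}|=O(\la_\va^{-1/2})$. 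Throughout I would use the available bounds: $\|\omega_\va^{(j)}\|_{\la_\va}\leq C\la_\va^{3/(p_\va-2)-9/4}$ from Proposition \ref{2-4} (equivalently $\|\bar\omega_\va^{(j)}\|_{H^1(\r^3)}\leq C\la_\va^{3/(p_\va-2)-2}$ after rescaling), the exponential decay of $u_\va^{(j)}$ and $\xi_\va$ from Lemmas \ref{3-1} and \ref{3-4}, the closeness $\|Q_{p_\va}-Q_{\bar p}\|\leq C\va$ of Lemma \ref{4-1}, and the relation $\la_\va^{\va}\to c_0$ from Lemma \ref{prop-lambda}, which keeps every factor $\la_\va^{c\va}$ bounded.

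For \eqref{5-4-5} I would treat the three pieces of $C_\va$ separately. The constant $\la_\va^{(1)}$ gives the leading $1$ after division. For the nonlocal piece, exactly as in the proof of Lemma \ref{lemma1} one has $|x|^{-1}\ast (u_\va^{(2)})^2=O(\la_\va^{2/(p_\va-2)-1})$, so after dividing by $\la_\va^{(1)}$ it contributes only $O(\la_\va^{2/(p_\va-2)-2})=O(\la_\va^{-1/2})$. For the nonlinear coefficient $(p_\va-1)V(x)\int_0^1(tu_\va^{(1)}+(1-t)u_\va^{(2)})^{p_\va-2}\dt$, Taylor-expanding $V$ about $x_\va^{(1)}$ (using $\nabla V(x_\va^{(1)})=O(\la_\va^{-1})$ and $V(x_\va^{(1)})=V_0+O(\la_\va^{-2})$) and replacing each $u_\va^{(j)}$ by $(\la_\va/V_0)^{1/(p_\va-2)}Q_{p_\va}(x)$ produces the leading $(p_\va-1)Q_{p_\va}^{p_\va-2}(x)$. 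The delicate correction is the one carried by $\omega_\va^{(j)}$: since $(u_\va^{(j)})^{p_\va-2}=U_{x_\va^{(j)},p_\va}^{p_\va-2}+O\bigl(U_{x_\va^{(j)},p_\va}^{p_\va-3}|\omega_\va^{(j)}|+|\omega_\va^{(j)}|^{p_\va-2}\bigr)$, bounding its contribution to the coefficient pointwise requires an $L^\infty$ estimate on the rescaled $\bar\omega_\va^{(j)}$; upgrading the $H^1$-type bound above to $L^\infty$ through the equation for $\omega_\va^{(j)}$ costs a logarithmic factor, and this is precisely the step that generates the error $O\bigl(\ln\la_\va^{(1)}/\sqrt{\la_\va^{(1)}}\bigr)$.

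For \eqref{5-4-6} the central task is to identify the ratio $\mu_\va:=(\la_\va^{(2)}-\la_\va^{(1)})/\|u_\va^{(1)}-u_\va^{(2)}\|_{L^\infty(\r^3)}$ that multiplies $u_\va^{(2)}$ in $g_\va$; the remaining nonlocal piece $-(|x|^{-1}\ast[(u_\va^{(1)}+u_\va^{(2)})\xi_\va])u_\va^{(1)}$ is, by the Hardy--Littlewood--Sobolev estimates of Section 2, of order $O(\la_\va^{2/(p_\va-2)-2}Q_{p_\va}(x))=O(\la_\va^{-1/2}Q_{p_\va}(x))$ after rescaling and division, hence absorbed into the error. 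To find $\mu_\va$ I would test the identity $-\Delta\xi_\va+C_\va\xi_\va=g_\va$ against $u_\va^{(2)}$, integrate by parts, and use the equation solved by $u_\va^{(2)}$; the two nonlocal terms $\int(|x|^{-1}\ast(u_\va^{(2)})^2)\xi_\va u_\va^{(2)}$ cancel, leaving $\mu_\va\,a=-(p_\va-2)\int_{\r^3}V(u_\va^{(2)})^{p_\va-1}\xi_\va\dx+(\text{lower order})$. The mass constraint, which upon subtracting $\int(u_\va^{(1)})^2=\int(u_\va^{(2)})^2=a$ yields $\int_{\r^3}(u_\va^{(1)}+u_\va^{(2)})\xi_\va\dx=0$, is used to discard the Lagrange-multiplier self-interaction $(\la_\va^{(1)}-\la_\va^{(2)})\int\xi_\va u_\va^{(2)}\dx$: that integral equals $-\tfrac12\|u_\va^{(1)}-u_\va^{(2)}\|_{L^\infty}\int\xi_\va^2\dx$ and is of lower order. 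Rescaling $\int V(u_\va^{(2)})^{p_\va-1}\xi_\va\dx$ and substituting $a=(\la_\va/V_0)^{2/(p_\va-2)}\la_\va^{-3/2}a_{*,\va}\bigl(1+o(1)\bigr)$ then gives $\mu_\va\approx-\frac{(p_\va-2)\la_\va}{a_{*,\va}}(\la_\va/V_0)^{-1/(p_\va-2)}\int_{\r^3}Q_{p_\va}^{p_\va-1}\bar\xi_\va\dy$, and inserting this into $\frac{1}{\la_\va^{(1)}}\mu_\va\,u_\va^{(2)}\bigl(\tfrac{x}{\sqrt{\la_\va^{(1)}}}+x_\va^{(1)}\bigr)$ reproduces the claimed main term $-\frac{p_\va-2}{a_{*,\va}}Q_{p_\va}(x)\int_{\r^3}Q_{p_\va}^{p_\va-1}\bar\xi_\va\dy$.

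I expect the \emph{main obstacle} to be the determination of $\mu_\va$: one must verify that the nonlocal contribution to $g_\va$ is genuinely of lower order, that the Lagrange-multiplier term can be discarded through the mass constraint (which in turn needs the smallness of $\|u_\va^{(1)}-u_\va^{(2)}\|_{L^\infty}$), and that $\int_0^1(tu_\va^{(1)}+(1-t)u_\va^{(2)})^{p_\va-2}\dt$ may be replaced by $(u_\va^{(2)})^{p_\va-1}$ inside the integral to leading order. A secondary but essential difficulty is the pointwise estimate of $\omega_\va^{(j)}$ with only a logarithmic loss, since it is this loss that fixes the error at $O\bigl(\ln\la_\va^{(1)}/\sqrt{\la_\va^{(1)}}\bigr)$ rather than something larger.
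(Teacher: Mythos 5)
Your proposal is correct and follows essentially the same route as the paper: rescale, insert $u_\va^{(j)}=U_{x_\va^{(j)},p_\va}+\omega_\va^{(j)}$, control the Poisson terms by Hardy--Littlewood--Sobolev, and extract $\big(\la_\va^{(2)}-\la_\va^{(1)}\big)/\|u_\va^{(1)}-u_\va^{(2)}\|_{L^\infty}$ from an integral identity combined with the constraint $\int_{\r^3}(u_\va^{(1)}+u_\va^{(2)})\xi_\va\dx=0$; your variant (testing the difference of the two equations against $u_\va^{(2)}$) is algebraically equivalent to the paper's subtraction of the two self-tested energy identities and produces the same cancellation of the Poisson self-interaction and the same main term $-(p_\va-2)\int V\,(u_\va^{(2)})^{p_\va-1}\xi_\va$. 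The only divergences are cosmetic: the paper discards the multiplier term $(\la_\va^{(2)}-\la_\va^{(1)})\int u_\va^{(1)}\xi_\va$ via the a priori bound $\la_\va^{(2)}/\la_\va^{(1)}-1=O(\ln\la/\sqrt{\la})$ from Lemma \ref{prop-lambda} rather than via your $\int\xi_\va^2$ identity (which in fact requires $\|u_\va^{(1)}-u_\va^{(2)}\|_{L^2}^2=o(a)$ --- $L^2$, not $L^\infty$, smallness of the difference, and that does hold), and the logarithm in the stated error originates from that same lemma rather than from a pointwise estimate on $\omega_\va^{(j)}$.
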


\begin{proof}
Lemma~\ref{prop-lambda} and estimate \eqref{lambda4} imply that
\begin{equation}\label{5-4-3}
 \frac{\la_\va^{(2)}}{\la_\va^{(1)}}=1+O\bigg(\frac{\ln \la_\va^{(1)}}{\big(\la_\va^{(1)}\big)^{\frac{7}{2}-\frac{4}{p_\va-2}}}\bigg)
\end{equation}
 and
\begin{equation}\label{5-4-4}
| x_{\va}^{(1)}-x_{\va}^{(2)}| = O\Bigl((\la_\va^{(1)})^{-1}\Bigr) .
\end{equation}
Hence, we have, for $x \in B_{\sqrt{\lambda_{\va}^{(1)}}d }(0)$,
\begin{equation*}
\begin{split}
&u_{\va}^{(1)}\Big(\frac{x}{\sqrt{\la_{\va}^{(1)}}}+x_{\va}^{(1)}\Big)\\
={}& \Big(\frac{\la_{\va}^{(1)}}{V_0}\Big)^{\frac{1}{p_{\va}-2}} Q_{p_{\va}}\bigg(\sqrt{\la_{\va}^{(1)}} \Big(\frac{x}{\sqrt{\la_{\va}^{(1)}}}+x_{\va}^{(1)} -x_{\va}^{(1)} \Big) \bigg) +\omega_{\va}^{(1)}\Big(\frac{x}{\sqrt{\la_{\va}^{(1)}}}+x_{\va}^{(1)}\Big)  \\
={}&\Big(\frac{\la_{\va}^{(1)}}{V_0}\Big)^{\frac{1}{p_{\va}-2}}  Q_{p_{\va}}(x)
+\omega_{\va}^{(1)}\Big(\frac{x}{\sqrt{\la_{\va}^{(1)}}}+x_{\va,i}^{(1)}\Big) ,
\end{split}
\end{equation*}
and
\begin{equation}\label{1-11-6}
\begin{split}
&u_{\va}^{(2)}\Big(\frac{x}{\sqrt{\la_{\va}^{(1)}}}+x_{\va}^{(1)}\Big)\\
={}& \Big(\frac{\la_{\va}^{(2)}}{V_0}\Big)^{\frac{1}{p_{\va}-2}}  Q_{p_{\va}}\bigg(\sqrt{\la_{\va}^{(2)}} \Big(\frac{x}{\sqrt{\la_{\va}^{(1)}}}+x_{\va}^{(1)} -x_{\va}^{(2)} \Big) \bigg) +\omega_{\va}^{(2)}\Big(\frac{x}{\sqrt{\la_{\va}^{(1)}}}+x_{\va}^{(1)}\Big) \\
={}& \Big(\frac{\la_{\va}^{(1)}}{V_0}\Big)^{\frac{1}{p_{\va}-2}} Q_{p_{\va}} (x)  \bigg[1 + O\bigg(\frac{\ln \la_\va^{(1)}}{\big(\la_\va^{(1)}\big)^{\frac{7}{2}-\frac{4}{p_\va-2}}}\bigg)  \bigg] 
+ O\biggl(\frac{\ln \la_\va^{(1)}}{\big(\la_\va^{(1)}\big)^{\frac{7}{2}-\frac{5}{p_\va-2}}} |\nabla Q_{p_{\va}}|\biggr) +\omega_{\va}^{(2)}\Big(\frac{x}{\sqrt{\la_{\va}^{(1)}}}+x_{\va}^{(1)}\Big).
\end{split}
\end{equation}
Thus,
\begin{equation}\label{sum-u}
	\begin{split}
		& t u_{\varepsilon}^{(1)}\Big(\frac{x}{\sqrt{\la_{\va}^{(1)}}}+x_{\va}^{(1)}\Big)
		+(1-t) u_{\varepsilon}^{(2)}\Big(\frac{x}{\sqrt{\la_{\va}^{(1)}}}+x_{\va}^{(1)}\Big) \\
		={}&  \Big(\frac{\la_{\va}^{(1)}}{V_0}\Big)^{\frac{1}{p_{\va}-2}} Q_{p_{\va}} (x)  \bigg[1 + O\bigg(\frac{\ln \la_\va^{(1)}}{\big(\la_\va^{(1)}\big)^{\frac{7}{2}-\frac{4}{p_\va-2}}}\bigg)  \bigg] 
		+ O\bigg(\frac{\ln \la_\va^{(1)}}{\big(\la_\va^{(1)}\big)^{\frac{7}{2}-\frac{5}{p_\va-2}}} |\nabla Q_{p_{\va}}|\bigg) + O\bigg(\sum_{l=1}^2 \omega_{\va}^{(l)}\Big(\frac{x}{\sqrt{\la_{\va}^{(1)}}}+x_{\va}^{(1)}\Big) \bigg) .\\
	\end{split}
\end{equation}
We deduce that
\begin{eqnarray*}
&&\frac{1}{\la_{\va}^{(1)}} C_{\va}\Big(\frac{x}{\sqrt{\la_{\va}^{(1)}}}+x_{\va}^{(1)}\Big)\\
&=&1
+\frac{1}{\la_{\va}^{(1)}} \bigg(\Big|\frac{x}{\sqrt{\la_{\va}^{(1)}}}+x_{\va}^{(1)}\Big|^{-1} \ast (u_{\va}^{(2)})^2\bigg) \bar \xi_{\varepsilon} \\
&&-\frac{1}{\la_{\va}^{(1)}} (p_{\va}-1) V\Big(\frac{x}{\sqrt{\la_{\va}^{(1)}}}+x_{\va}^{(1)}\Big) 
\int_{0}^{1}\bigg(t u_{\varepsilon}^{(1)}\Big(\frac{x}{\sqrt{\la_{\va}^{(1)}}}+x_{\va}^{(1)}\Big)
+(1-t) u_{\varepsilon}^{(2)}\Big(\frac{x}{\sqrt{\la_{\va}^{(1)}}}+x_{\va}^{(1)}\Big)\bigg)^{p_{\varepsilon}-2} \dt\\
&=& 1 -  (p_\va-1) \frac{V\Big(\frac{x}{\sqrt{\la_{\va}^{(1)}}}+x_{\va}^{(1)}\Big) }{V_0} Q^{p_\va-2}_{p_{\va}}(x)  \bigg\{   1 + O\bigg(\frac{\ln \la_\va^{(1)}}{\sqrt{\la_\va^{(1)}}} \bigg)  \nonumber\\
&& + O\bigg( \frac{1}{\big(\la_{\va}^{(1)}\big)^{\frac{1}{p_{\va}-2}}Q_{p_\va}(x)}\sum_{l=1}^2 \omega_{\va}^{(l)}\Big(\frac{x}{\sqrt{\la_{\va}^{(1)}}}+x_{\va}^{(1)}\Big)
\bigg)   \bigg\} ^{p_{\va}-2} 
+O\bigg(\frac{1}{\sqrt{\la_{\va}^{(1)}}}\bigg) \\
&=& 1 - (p_\va-1) Q_{p_\va}^{p_\va-2}(x)
+O\bigg(\frac{\ln \la_\va^{(1)}}{\sqrt{\la_{\va}^{(1)}}}\bigg),
\end{eqnarray*}
 since, by Lemma~\ref{3-1},
\[
 Q^{p_\va-3}_{p_{\va}}(x)\Big|\omega_{\va}^{(l)}
 \Big(\frac{x}{\sqrt{\la_{\va}^{(1)}}}+x_{\va}^{(1)}\Big)\Big|\le C.
 \]

Next, we estimate $g_\va$.
From \eqref{maineq}, we find
$$
\int_{\mathbb{R}^{3}} |\nabla u_{\va}^{(l)}|^{2} \dx +a \lambda_{\va}^{(l)} + \int_{\r^3} \int_{\r^3} \frac{1}{|x-y|} \big(u_{\va}^{(l)}(x)\big)^2 \big(u_{\va}^{(l)}(y)\big)^2 \dx\dy 
= \int_{\mathbb{R}^{3}}V(x)\big(u_{\va}^{(l)}\big)^{p_{\va}} \dx ,
$$
which gives
$$
\begin{aligned}
\frac{a \big(\lambda_{\va}^{(2)}-\lambda_{\va}^{(1)}\big)}{\|u_{\va}^{(1)}-u_{\va}^{(2)}\|_{L^{\infty}(\mathbb R^{3})}}
={}& \int_{\mathbb{R}^{3}}  \nabla\big(u_{\va}^{(1)}+u_{\va}^{(2)}\big) \cdot \nabla  \xi_{\va} \dx
+  \int_{\mathbb{R}^{3}} \frac{V(x)\big[\big(u_{\va}^{(2)}\big)^{p_{\varepsilon}}
-\big(u_{\va}^{(1)}\big)^{p_{\varepsilon}}\big]}{\|u_{\va}^{(1)}
-u_{\va}^{(2)}\|_{L^{\infty}(\mathbb{R}^{3})}} \dx \\
{}&+  \int_{\r^3} \int_{\r^3} \frac{1}{|x-y|} \big[u_{\va}^{(1)}(x)+u_{\va}^{(2)}(x)\big] \xi_{\va}(x) \big(u_{\va}^{(1)}(y)\big)^2 \dx\dy \\
{}& + \int_{\r^3} \int_{\r^3} \frac{1}{|x-y|} \big(u_{\va}^{(2)}(x)\big)^2 \big[u_{\va}^{(1)}(y)+u_{\va}^{(2)}(y)\big] \xi_{\va}(y) \dx\dy .
\end{aligned}
$$

Since
$$
\int_{\mathbb{R}^{3}}\big(u_{\va}^{(1)}+u_{\va}^{(2)}\big) \xi_{\varepsilon} \dx =\frac{1}{\|u_{\va}^{(1)}-u_{\va}^{(2)}\|_{L^{\infty}(\mathbb{R}^3)}} \bigg[\int_{\mathbb{R}^{3}}\big(u_{\va}^{(1)}\big)^{2} \dx -\int_{\mathbb{R}^{3}}\big(u_{\va}^{(2)}\big)^{2} \dx\bigg]=0,
$$
and
\begin{equation}\label{decompose}
\begin{aligned}
  &\big(u_\va^{(1)}\big)^{p_\va}-\big(u_\va^{(2)}\big)^{p_\va}\\
= {}&\big(u_\va^{(1)}\big)^{p_\va}-\big(u_\va^{(1)}\big)^{p_\va-1}u_\va^{(2)}+\big(u_\va^{(1)}\big)^{p_\va-1}u_\va^{(2)}
   -\Bigl[\big(u_\va^{(2)}\big)^{p_\va}-\big(u_\va^{(2)}\big)^{p_\va-1}u_\va^{(1)}+\big(u_\va^{(2)}\big)^{p_\va-1}u_\va^{(1)}\Bigr]\\
= {}&\big(u_\va^{(1)}\big)^{p_\va-1}\big(u_\va^{(1)}-u_\va^{(2)}\big) + \big(u_\va^{(2)}\big)^{p_\va-1}\big(u_\va^{(1)}-u_\va^{(2)}\big)
   +u_\va^{(1)}u_\va^{(2)}\Bigl[\big(u_\va^{(1)}\big)^{p_\va-2}-\big(u_\va^{(2)}\big)^{p_\va-2}\Bigr],
\end{aligned}
\end{equation}
we have
\begin{equation}\label{dec1}
  \begin{split}
    &\frac{a \big(\lambda_{\va}^{(2)}-\lambda_{\va}^{(1)}\big)}{\|u_{\va}^{(1)}-u_{\va}^{(2)}\|_{L^{\infty}(\mathbb{R}^3)}} \\
    ={}&  \lambda_{\va}^{(2)} \int_{\mathbb{R}^{3}}\big(u_{\va}^{(1)}+u_{\va}^{(2)}\big) \xi_{\va} \dx + \int_{\mathbb{R}^{3}}  \nabla\big(u_{\va}^{(1)}+u_{\va}^{(2)}\big) \cdot \nabla \xi_{\va} \dx \\
     {}& -  \int_{\mathbb{R}^{3}} V(x)\Bigg[ \big(u_\va^{(1)}\big)^{p_\va-1}\xi_{\va} + \big(u_\va^{(2)}\big)^{p_\va-1} \xi_{\va}+ \frac{u_\va^{(1)}u_\va^{(2)}\big((u_\va^{(1)})^{p_\va-2}-(u_\va^{(2)})^{p_\va-2}\big)}{\|u_{\va}^{(1)}-u_{\va}^{(2)}\|_{L^{\infty}(\mathbb{R}^3)}} \Bigg] \dx 
     +O\Big(\big(\lambda_{\va}^{(1)}\big)^{-\frac14} \Big) \\
    ={}& \big(\lambda_{\va}^{(2)}-\lambda_{\va}^{(1)}\big) \int_{\mathbb{R}^{3}} u_{\va}^{(1)} \xi_{\va} \dx
    -\int_{\mathbb{R}^{3}} \frac{V(x) u_\va^{(1)}u_\va^{(2)}\Bigl[\big(u_\va^{(1)}\big)^{p_\va-2}-\big(u_\va^{(2)}\big)^{p_\va-2}\Bigr]}{\|u_{\va}^{(1)}-u_{\va}^{(2)}\|_{L^{\infty}(\mathbb{R}^3)}} \dx
    +O\Big(\big(\lambda_{\va}^{(1)}\big)^{-\frac14} \Big) \\
    ={}&  \big(\lambda_{\va}^{(2)}-\lambda_{\va}^{(1)}\big) \int_{\mathbb{R}^{3}} u_{\va}^{(1)} \xi_{\va} \dx 
    -(p_{\va}-2) \int_{\mathbb{R}^{3}}V(x)\Bigl[u_{\va}^{(2)}+\theta\big(u_{\va}^{(1)}-u_{\va}^{(2)}\big)\Bigr]^{p_{\va}-3} u_{\va}^{(1)} u_{\va}^{(2)} \xi_\va  \dx
    +O\Big(\big(\lambda_{\va}^{(1)}\big)^{-\frac14} \Big).
  \end{split}
\end{equation}
Then
\begin{equation*}
  \begin{split}
    &\frac{a}{\la_\va^{(1)}}\frac{\lambda_{\va}^{(2)}-\lambda_{\va}^{(1)}}{\|u_{\va}^{(1)}-u_{\va}^{(2)}\|_{L^{\infty}(\mathbb{R}^3)}}\\
    ={}& \Big(\frac{\la_{\va}^{(2)}}{\la_{\va}^{(1)}}-1\Big) \int_{\mathbb{R}^{3}} u_{\va}^{(1)} \xi_{\va} \dx
     -  \frac{p_{\va}-2}{\lambda_{\va}^{(1)}} \int_{\mathbb{R}^{3}}V(x)\Bigl[u_{\va}^{(2)}+\theta\big(u_{\va}^{(1)}-u_{\va}^{(2)}\big)\Bigr]^{p_{\va}-3} u_{\va}^{(1)} u_{\va}^{(2)} \xi_{\va} \dx +O\Big(\big(\lambda_{\va}^{(1)}\big)^{-\frac14} \Big) \\
    ={}& - \frac{p_{\va}-2}{\lambda_{\va}^{(1)}} \int_{\mathbb{R}^{3}} V(x)\Bigl[ u_{\va}^{(2)}+\theta\big(u_{\va}^{(1)}-u_{\va}^{(2)}\big)\Bigr]^{p_{\va}-3} u_{\va}^{(1)} u_{\va}^{(2)} \xi_{\va} \dx 
    +O\Big(\big(\lambda_{\va}^{(1)}\big)^{-\frac14} \Big) .
  \end{split}
\end{equation*}
By a change of variable and \eqref{sum-u}, we have
\begin{eqnarray*}
   &&\frac{a}{\la_\va^{(1)}}\frac{\lambda_{\va}^{(2)}-\lambda_{\va}^{(1)}}{\|u_{\va}^{(1)}-u_{\va}^{(2)}\|_{L^{\infty}(\mathbb{R}^3)}}\\
   &= & - \frac{p_{\va}-2}{\lambda_{\va}^{(1)}} \int_{B_{d}(x^{(1)}_{\va})} V(x) \Bigl[ u_{\va}^{(2)}+\theta\big(u_{\va}^{(1)}-u_{\va}^{(2)}\big)\Bigr]^{p_{\va}-3} u_{\va}^{(1)} u_{\va}^{(2)} \xi_{\va} \dx +O\Big(\big(\lambda_{\va}^{(1)}\big)^{-\frac54} \Big)  \\
   &= &- \frac{p_{\va}-2}{\lambda_{\va}^{(1)}}  \big( \lambda_{\va}^{(1)} \big)^{\frac{p_{\va}-1}{p_{\va}-2} -\frac{3}{2}} V_0^{-\frac{1}{p_{\va}-2} } \int_{\mathbb{R}^{3}} Q_{p_{\va}}^{p_{\va}-1} \bar{\xi}_{\va} \dx \cdot \biggl\{1 + O\bigg(\frac{\ln \la_\va^{(1)}}{\big(\la_\va^{(1)}\big)^{\frac{7}{2}-\frac{4}{p_\va-2}}}\bigg)  \biggr\} 
   +O\Big(\big(\lambda_{\va}^{(1)}\big)^{-\frac54} \Big) \\
 & =& - (p_{\va}-2) V_0^{-\frac{1}{p_{\va}-2} } \big(\lambda_{\va}^{(1)}\big)^{\frac{1}{p_{\va}-2}-\frac{3}{2}}  \int_{\mathbb{R}^{3}} Q_{p_{\va}}^{p_{\va}-1} \bar{\xi}_{\va} \dx 
 +O\bigg(\frac{\ln \la_\va^{(1)}}{\big(\la_\va^{(1)}\big)^{\frac54}}\bigg). 
\end{eqnarray*}
Thus, using \eqref{1-11-6}, we find that for $x \in B_{\sqrt{\lambda_{\va}^{(1)}}d }(0)$,
\begin{eqnarray*}
&&\frac{a}{\la_{\va}^{(1)}} g_{\va}\Big(\frac{x}{\sqrt{\la_{\va}^{(1)}}}+x_{\va}^{(1)}\Big)\\
&=& \frac{a}{\la_\va^{(1)}}\frac{\lambda_{\va}^{(2)}-\lambda_{\va}^{(1)}}{\|u_{\va}^{(1)}-u_{\va}^{(2)}\|_{L^{\infty}(\mathbb{R}^3)}} u_{\va}^{(2)}\Big(\frac{x}{\sqrt{\la_{\va}^{(1)}}}+x_{\va}^{(1)}\Big) \\
&&- \frac{a}{\la_\va^{(1)}} \bigg(\Big|\frac{x}{\sqrt{\la_{\va}^{(1)}}}+x_{\va}^{(1)}\Big|^{-1}\ast \Big[\big(u_{\va}^{(1)}+u_{\va}^{(2)}\big)\xi_{\varepsilon}\Big]\bigg) u_{\va}^{(1)}\Big(\frac{x}{\sqrt{\la_{\va}^{(1)}}}+x_{\va}^{(1)}\Big)  \\
&=& \frac{a}{\la_\va^{(1)}}\frac{\lambda_{\va}^{(2)}-\lambda_{\va}^{(1)}}{\|u_{\va}^{(1)}-u_{\va}^{(2)}\|_{L^{\infty}(\mathbb{R}^3)}} u_{\va}^{(2)}\Big(\frac{x}{\sqrt{\la_{\va}^{(1)}}}+x_{\va}^{(1)}\Big) 
+  O\bigg(\frac{1}{\sqrt{\la_\va^{(1)}}}Q_{p_{\va}} \bigg) \\
&=& - (p_{\va}-2) V_0^{-\frac{2}{p_{\va}-2} } \big(\lambda_{\va}^{(1)}\big)^{\frac{2}{p_{\va}-2}-\frac{3}{2}}  \int_{\mathbb{R}^{3}} Q_{p_{\va}}^{p_{\va}-1} \bar{\xi}_{\va} \dx \cdot \biggl\{Q_{p_{\va}}(x)  
+O\bigg(\frac{\ln \la_\va^{(1)}}{\sqrt{\la_\va^{(1)}}}Q_{p_{\va}} \bigg) \\
 && + O\biggl( \frac{1}{(\lambda_{\va}^{(1)})^{\frac{1}{p_{\va}-2}}} \omega_{\va}^{(2)}\Big(\frac{x}{\sqrt{\la_{\va}^{(1)}}}+x_{\va}^{(1)}\Big) \biggr)\biggr\}
+O\bigg(\frac{\ln \la_\va^{(1)}}{\sqrt{\la_\va^{(1)}}}Q_{p_{\va}} \bigg) .
\end{eqnarray*}
Using the fact that
\[
\begin{aligned}
\big(\la_{\va}^{(1)}\big)^{\frac{2}{p_{\va}-2} -\frac{3}{2}}
={}& V_0^{\frac{2}{p_{\va}-2} } \frac{a}{a_{*, \va}}\bigg(1+O\Big(\frac{\ln \Lambda_\va}{\Lambda_\va^{\frac{7}{2}-\frac{4}{p_\va-2}}}\Big) \bigg)^{\frac{2}{p_{\va}-2} -\frac{3}{2}}
=V_0^{\frac{2}{p_{\va}-2} } \frac{a}{a_{*, \va}}\bigg(1+O\Big(\frac{1}{\Lambda_\va^{\frac{7}{2}-\frac{4}{p_\va-2}}}\Big)\bigg),
\end{aligned}
\]
we obtain that
\[
\begin{split}
&\frac{1}{\la_{\va}^{(1)}} g_{\va}\Big(\frac{x}{\sqrt{\la_{\va}^{(1)}}}+x_{\va}^{(1)}\Big)= -\frac{p_\va-2}{a_{*,\va}}Q_{p_\va}(x)  \int_{\mathbb{R}^{3}} Q_{p_{\va}}^{p_{\va}-1 } \bar{\xi}_{\va} \dx  +
O\biggl(\frac{\ln \la_\va^{(1)}}{\sqrt{\la_\va^{(1)}}}Q_{p_{\va}}(x)
\biggr).
\end{split}
\]
\end{proof}

 The next lemma gives the estimates of  $\xi_{\va}$ in $\r^{3}\setminus B_{R/\sqrt{\la_\va^{(1)}}}(x_{\va}^{(1)})$.
\begin{lemma}\label{lemma5-1}
  There exist constants $C>0$ and $\tau>0$ such that
\begin{equation}\label{5-4-1}
|\xi_{\va}| \leq C e^{-\tau \sqrt{\la_\va^{(1)}}|x- x_{\va}^{(1)}|},  \text{ for any } x \in \r^{3}\setminus  B_{R/\sqrt{\la_{\va}^{(1)}}}(x_{\va}^{(1)}) ,
\end{equation}
and
\begin{equation}\label{5-4-2}
|\nabla \xi_{\va}| \leq C e^{-\tau \sqrt{\la_\va^{(1)}}},  \text{ for any } x \in  \partial B_{d}(x_{\va}^{(1)}) .
\end{equation}
\end{lemma}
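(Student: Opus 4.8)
The plan is to establish both bounds by the comparison (maximum) principle, following the proof of Lemma~\ref{3-4} for the non-normalized problem almost verbatim, the only genuinely new feature being the extra source term carrying the difference of the Lagrange multipliers. Recall that $\xi_\va$ solves $-\Delta\xi_\va+C_\va(x)\xi_\va=g_\va(x)$, where $C_\va$ collects the coefficients that are linear and local in $\xi_\va$ and $g_\va$ collects the nonlocal term $\big(|x|^{-1}\ast[(u_\va^{(1)}+u_\va^{(2)})\xi_\va]\big)u_\va^{(1)}$ together with the Lagrange term $\frac{\la_\va^{(2)}-\la_\va^{(1)}}{\|u_\va^{(1)}-u_\va^{(2)}\|_{L^\infty(\r^3)}}u_\va^{(2)}$. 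The strategy is first to show $C_\va$ stays comparable to $\la_\va^{(1)}$ away from the peak, then to bound $g_\va$ by an exponentially decaying profile, and finally to conclude with a barrier.

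First I would prove the lower bound $C_\va(x)\ge\frac12\la_\va^{(1)}$ on $\r^3\setminus B_{R/\sqrt{\la_\va^{(1)}}}(x_\va^{(1)})$ for $R$ large. This rests on three facts already available: by Lemma~\ref{3-1} applied to each $u_\va^{(j)}$, combined with $|x_\va^{(j)}-b_0|=O\big((\la_\va^{(1)})^{-1}\big)$ from \eqref{lambda4} and \eqref{5-4-4}, one has $u_\va^{(j)}\le(\la_\va^{(1)})^{\frac{1}{p_\va-2}}\tau$ outside this ball, so that $\frac{1}{\la_\va^{(1)}}(p_\va-1)V(x)\int_0^1\big(tu_\va^{(1)}+(1-t)u_\va^{(2)}\big)^{p_\va-2}\dt\le\frac14$; the nonlocal coefficient is nonnegative and, exactly as in Lemma~\ref{3-4}, satisfies $\frac{1}{\la_\va^{(1)}}\big(|x|^{-1}\ast(u_\va^{(2)})^2\big)\le C(\la_\va^{(1)})^{\frac{2}{p_\va-2}-2}\le\frac14$ for $\la_\va^{(1)}$ large. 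Adding these yields the claimed lower bound on $C_\va$.

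Next I would bound the right-hand side by $\frac{1}{\la_\va^{(1)}}|g_\va(x)|\le Ce^{-\tau\sqrt{\la_\va^{(1)}}|x-x_\va^{(1)}|}$ on the exterior region. For the nonlocal contribution this is the same computation as in Lemma~\ref{3-4}: the convolution factor is uniformly bounded (in fact $o(1)$) and is multiplied by $u_\va^{(1)}$, which decays exponentially by Lemma~\ref{3-1}. The new Lagrange term is controlled by invoking Lemma~\ref{blowup}, which gives $\frac{1}{\la_\va^{(1)}}\frac{\la_\va^{(2)}-\la_\va^{(1)}}{\|u_\va^{(1)}-u_\va^{(2)}\|_{L^\infty(\r^3)}}=O\big((\la_\va^{(1)})^{\frac{1}{p_\va-2}-\frac32}\big)$; multiplying by $u_\va^{(2)}(x)$ and using $|x_\va^{(1)}-x_\va^{(2)}|=O\big((\la_\va^{(1)})^{-1}\big)$ from \eqref{5-4-4} to replace $x_\va^{(2)}$ by $x_\va^{(1)}$ in the exponent (at the cost of a factor $e^{O((\la_\va^{(1)})^{-1/2})}\to1$) shows this term is also $\le Ce^{-\tau\sqrt{\la_\va^{(1)}}|x-x_\va^{(1)}|}$.

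With these two estimates, \eqref{5-4-1} follows by comparing $\xi_\va$ with the barrier $\phi(x)=Ae^{-\tau\sqrt{\la_\va^{(1)}}|x-x_\va^{(1)}|}$: for $\tau<\frac12$ a direct computation gives $-\Delta\phi+C_\va\phi\ge\la_\va^{(1)}\big(\tfrac12-\tau^2\big)\phi\ge\frac14\la_\va^{(1)}\phi\ge|g_\va|$ on the exterior region, while $\|\xi_\va\|_{L^\infty(\r^3)}=1$ fixes $\phi\ge|\xi_\va|$ on $\partial B_{R/\sqrt{\la_\va^{(1)}}}(x_\va^{(1)})$ once $A$ is chosen as a suitable $\va$-independent constant; applying the maximum principle to $\phi\pm\xi_\va$ (both of which are nonnegative on the boundary, tend to $0$ at infinity, and are supersolutions of $-\Delta+C_\va$) yields \eqref{5-4-1}. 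The gradient bound \eqref{5-4-2} then follows from interior elliptic estimates on a fixed annulus about $\partial B_d(x_\va^{(1)})$, where \eqref{5-4-1} and the bound on $g_\va$ render $-\Delta\xi_\va=g_\va-C_\va\xi_\va$ exponentially small, the algebraic factor $\la_\va^{(1)}$ in $C_\va$ being absorbed by the exponential decay of $\xi_\va$. The main obstacle, and the only point of departure from Lemma~\ref{3-4}, is the Lagrange-multiplier source term in $g_\va$; controlling it is exactly what forces us to have established the sharp estimate for $\la_\va^{(2)}-\la_\va^{(1)}$ in Lemma~\ref{blowup} beforehand.
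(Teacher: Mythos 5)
Your proposal is correct and follows essentially the same route as the paper: the paper's proof consists precisely of noting that $\tfrac{1}{\la_\va^{(1)}}C_\va\ge\tfrac12$ and $\tfrac{1}{\la_\va^{(1)}}|g_\va|\le Ce^{-\sqrt{\la_\va^{(1)}}|x-x_\va^{(1)}|}$ outside $B_{R/\sqrt{\la_\va^{(1)}}}(x_\va^{(1)})$ (as consequences of Lemma \ref{blowup}, including the Lagrange-multiplier estimate you single out) and then invoking the comparison principle as in Lemma \ref{3-4}. Your write-up merely fills in the barrier computation and the interior elliptic estimate for the gradient bound, which the paper leaves implicit.
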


\begin{proof}
For large fixed $R>0$, \eqref{5-4-5} and \eqref{5-4-6} imply that
\[
\frac{1}{\la_\va^{(1)}}C_\va(x) \geq \frac{1}{2}, \frac{1}{\la_\va^{(1)}}|g_\va(x)| \leq C  e^{-\sqrt{\la_{\va}^{(1)}}|x-x_{\va}^{(1)}|}, \quad x\in \mathbb{R}^{3}\setminus  B_{R/\sqrt{\la_{\va}^{(1)}}}(x_{\va}^{(1)}).
\]
Using the comparison principle as the proof of Lemma \ref{3-4}, we can prove \eqref{5-4-1} and \eqref{5-4-2}.
\end{proof}

 We now estimate $\xi_{\va}$ in $B_{R/\sqrt{\la_\va^{(1)}}}(x_{\va}^{(1)})$.
Denote $\tilde L_\va$ as follows
  \begin{equation}\label{20-23-5}
   \tilde L_\va \tilde{\xi}_{\va} =- \Delta\tilde{\xi}_{\va} + \Bigl[1-(p_\va-1) Q_{p_\va}^{p_\va-2}(x)\Bigr] \tilde{\xi}_{\va}   + \frac{p_\va-2}{a_{*,\va}}Q_{p_\va}(x)  \int_{\mathbb{R}^{3}} Q_{p_{\va}}^{p_{\va}-1} \tilde{\xi}_{\va} \dx.
  \end{equation}
Following the proof of \cite[Lemma A.1]{Yan2021}, we have
 \begin{lemma}\label{limiteq}
If $ \tilde L_\va \tilde{\xi}_{\va} =0$, then we have
\[
\tilde{\xi}_{\va}(x) = \sum_{j=0}^{3}\ga_{\va,j} \psi_{\va, j},
\]
where $\ga_{\va,j}$ are some constants and
\[
\psi_{\va, 0}= Q_{p_\va} + \frac{p_{\va}-2}{2}x \cdot \nabla Q_{p_\va}, \quad \psi_{\va, j} = \frac{\partial Q_{p_\va}}{\partial x_j}, \text{ for } j=1,2,3.
\]
\end{lemma}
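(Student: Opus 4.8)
The plan is to reduce the nonlocal eigenvalue problem $\tilde L_\va\tilde\xi_\va=0$ to the standard local linearization of \eqref{1-31-5} plus one extra scaling direction. Writing $L_\va^0\phi := -\Delta\phi + \phi - (p_\va-1)Q_{p_\va}^{p_\va-2}\phi$ for the usual linearized operator at $Q_{p_\va}$, the operator in \eqref{20-23-5} is
\[
\tilde L_\va\tilde\xi_\va = L_\va^0\tilde\xi_\va + \frac{p_\va-2}{a_{*,\va}}\Big(\int_{\r^3}Q_{p_\va}^{p_\va-1}\tilde\xi_\va\dx\Big)Q_{p_\va}.
\]
Setting the scalar $c:=\int_{\r^3}Q_{p_\va}^{p_\va-1}\tilde\xi_\va\dx$, the equation $\tilde L_\va\tilde\xi_\va=0$ becomes $L_\va^0\tilde\xi_\va = -\tfrac{(p_\va-2)c}{a_{*,\va}}Q_{p_\va}$, so the whole problem is governed by two facts about $L_\va^0$.

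First I would record the kernel and a scaling identity. By the nondegeneracy of the ground state $Q_{p_\va}$ (Kwong-type uniqueness, valid for $p_\va\in(2,6)$), $\ker L_\va^0 = \mathrm{span}\{\pa_{x_j}Q_{p_\va}:j=1,2,3\}$ in $H^1(\r^3)$. Next I would produce an explicit preimage of $Q_{p_\va}$ from the $L^2$-scaling: the family $u_s(x):=s^{2/(p_\va-2)}Q_{p_\va}(sx)$ solves $-\Delta u_s + s^2 u_s = u_s^{p_\va-1}$, and differentiating in $s$ at $s=1$ gives $\frac{d}{ds}\big|_{s=1}u_s=\tfrac{2}{p_\va-2}\psi_{\va,0}$, while differentiating the equation yields
\[
L_\va^0\psi_{\va,0} = -(p_\va-2)Q_{p_\va}.
\]
Thus $\tfrac{c}{a_{*,\va}}\psi_{\va,0}$ is a particular solution of the inhomogeneous equation.

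With these ingredients the conclusion is immediate. Since $L_\va^0$ is self-adjoint and the right-hand side is orthogonal to the kernel, because $\int_{\r^3}Q_{p_\va}\pa_{x_j}Q_{p_\va}\dx=\tfrac12\int_{\r^3}\pa_{x_j}(Q_{p_\va}^2)\dx=0$, every solution of $L_\va^0\tilde\xi_\va = -\tfrac{(p_\va-2)c}{a_{*,\va}}Q_{p_\va}$ equals $\tfrac{c}{a_{*,\va}}\psi_{\va,0}$ plus a kernel element, giving
\[
\tilde\xi_\va = \frac{c}{a_{*,\va}}\psi_{\va,0} + \sum_{j=1}^3 d_j\,\pa_{x_j}Q_{p_\va} = \sum_{j=0}^3 \ga_{\va,j}\psi_{\va,j},
\]
with $\ga_{\va,0}=c/a_{*,\va}$ and $\ga_{\va,j}=d_j$. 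I would then verify self-consistency: plugging this back into the definition of $c$ and using $\int_{\r^3}Q_{p_\va}^{p_\va-1}\pa_{x_j}Q_{p_\va}\dx=0$ reduces the requirement to the identity $\int_{\r^3}Q_{p_\va}^{p_\va-1}\psi_{\va,0}\dx = a_{*,\va}$, which follows from the Nehari and Pohozaev identities for \eqref{1-31-5}; they give $\int_{\r^3}Q_{p_\va}^2\dx=\tfrac{6-p_\va}{2p_\va}\int_{\r^3}Q_{p_\va}^{p_\va}\dx$, matching the direct evaluation of $\int Q_{p_\va}^{p_\va-1}\psi_{\va,0}$ after one integration by parts. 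Hence the representation holds for every admissible $c$, and conversely one checks $\tilde L_\va\psi_{\va,j}=0$ for $j=0,\dots,3$, so the kernel is exactly four-dimensional.

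The main obstacle is precisely the nonlocal term: unlike the purely local linearization, $\tilde L_\va$ folds the scaling direction $\psi_{\va,0}$ into the kernel, so one cannot merely invoke nondegeneracy of $Q_{p_\va}$. What unlocks this is the scaling identity $L_\va^0\psi_{\va,0}=-(p_\va-2)Q_{p_\va}$, which identifies $\psi_{\va,0}$ as the response of the ground state to the $L^2$-norm–changing dilation and makes it cancel the mass-constraint term exactly; keeping the constants right there and in the Pohozaev consistency check is the delicate part. One should also confirm $\psi_{\va,0}\in H^1(\r^3)$, which follows from the exponential decay of $Q_{p_\va}$ and $\nabla Q_{p_\va}$, uniform for $\va\in(0,\va_0)$, via Lemma \ref{4-1}.
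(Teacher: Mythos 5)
Your argument is correct and complete. Note, however, that the paper does not actually prove Lemma \ref{limiteq}: it disposes of it in one line by citing \cite[Lemma A.1]{Yan2021}, so there is no in-text proof to compare against. What you have written is a self-contained version of the standard argument behind that reference: split $\tilde L_\va$ into the local linearization $L^0_\va$ plus the rank-one nonlocal term, freeze the scalar $c=\int_{\r^3}Q_{p_\va}^{p_\va-1}\tilde\xi_\va\dx$, use nondegeneracy of $Q_{p_\va}$ for $\ker L^0_\va=\mathrm{span}\{\pa_{x_j}Q_{p_\va}\}$, and use the dilation family $u_s(x)=s^{2/(p_\va-2)}Q_{p_\va}(sx)$ to get the particular solution via $L^0_\va\psi_{\va,0}=-(p_\va-2)Q_{p_\va}$. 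I checked the two computations on which everything hinges: the scaling identity is exact, and the consistency identity $\int_{\r^3}Q_{p_\va}^{p_\va-1}\psi_{\va,0}\dx=\frac{6-p_\va}{2p_\va}\int_{\r^3}Q_{p_\va}^{p_\va}\dx=a_{*,\va}$ does follow from the Nehari and Pohozaev identities for \eqref{1-31-5}, and it is precisely what makes $\tilde L_\va\psi_{\va,0}=0$. One small remark: for the implication as stated (every solution of $\tilde L_\va\tilde\xi_\va=0$ has the claimed form) the forward direction is already finished once you write $\tilde\xi_\va=\frac{c}{a_{*,\va}}\psi_{\va,0}+\sum_j d_j\pa_{x_j}Q_{p_\va}$ with $c$ defined from $\tilde\xi_\va$ itself; the self-consistency check and the converse verification $\tilde L_\va\psi_{\va,j}=0$ are not logically needed for the lemma, though they are what show the kernel is exactly four-dimensional, which is what Proposition \ref{prop-xi} implicitly uses when inverting $\tilde L_\va$ on $\tilde E$. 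Your approach buys a proof that is uniform in $\va\in(0,\va_0)$, which matters here since the paper needs these facts with constants independent of $\va$.
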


Recall that $\bar{\xi}_{\va}$ is defined in \eqref{0-23-5}.
Thus, we write
\begin{equation}\label{5-4-7}
  \bar{\xi}_{\va}(y) =  \sum_{j=0}^{3}\ga_{\va,j} \psi_{\va, j}(x) + \xi^*_{\va}(x),
\end{equation}
where $\xi^*_{\va}(x)\in \tilde{E}$ with
\begin{equation}\label{6-e}
	\tilde{E} = \{u\in H^1(\mathbb{R}^{3}) : \langle u,\psi_{\va, j}\rangle = 0, \text{ for }  j=0,1,2,3\}.
\end{equation}
Let $\ga_{\va,j}$ be as in \eqref{5-4-7}, we find
\begin{equation}\label{ga-pre}
  \ga_{\va,j} = \frac{\langle \bar{\xi}_{\va}, \psi_{\va, j} \rangle}{\|\psi_{\va, j}\|_{H^1(\mathbb R^3)}^2} = O(\|\bar{\xi}_{\va}\|_{H^1(\mathbb R^3)}) = O(1), j=0,1,2,3.
\end{equation}

From now on, we are going to prove $\ga_{\va,j}$ and $\xi^*_{\va}(x)$ are $o_\va(1)$.
We have the following estimate on $\xi^*_{\va}$.

\begin{proposition}\label{prop-xi}
  Let $\xi^*_{\va}(x)$ be as in \eqref{5-4-7}, then
  \begin{equation}\label{xi}
    \|\xi^*_{\va}\|_{H^1(\mathbb R^3)} =  O\biggl(\frac{\ln \la_{\va}^{(1)}}{\sqrt{\la_{\va}^{(1)}}}\biggr).
  \end{equation}
\end{proposition}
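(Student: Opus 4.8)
The plan is to convert the rescaled difference equation into a small perturbation of the model operator $\tilde L_\va$ and then invert $\tilde L_\va$ on $\tilde E$ uniformly in $\va$. First I would combine the rescaled equation \eqref{1-23-5} with the two expansions \eqref{5-4-5} and \eqref{5-4-6} of Lemma~\ref{blowup}. Writing $\frac{1}{\la_\va^{(1)}}C_\va=1-(p_\va-1)Q_{p_\va}^{p_\va-2}+O\big(\frac{\ln\la_\va^{(1)}}{\sqrt{\la_\va^{(1)}}}\big)$ and moving the nonlocal main term of $\frac{1}{\la_\va^{(1)}}g_\va$ to the left, \eqref{1-23-5} becomes $\tilde L_\va\bar\xi_\va=h_\va$ with $\tilde L_\va$ as in \eqref{20-23-5} and
\[
h_\va=O\Big(\tfrac{\ln\la_\va^{(1)}}{\sqrt{\la_\va^{(1)}}}\Big)\big(\bar\xi_\va+Q_{p_\va}\big),
\]
valid on $B_{\sqrt{\la_\va^{(1)}}d}(0)$; on the complement both the coefficients and $\bar\xi_\va$ are exponentially small by Lemma~\ref{lemma5-1}, so the contribution there is $O(e^{-\tau\sqrt{\la_\va^{(1)}}})$ and harmless. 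Since $\psi_{\va,j}\in\ker\tilde L_\va$ for $j=0,1,2,3$ by Lemma~\ref{limiteq}, the decomposition \eqref{5-4-7} gives $\tilde L_\va\xi^*_\va=\tilde L_\va\bar\xi_\va=h_\va$, so everything reduces to inverting $\tilde L_\va$ on $\tilde E$.

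The heart of the argument, and the step I expect to be hardest, is a coercivity estimate uniform in $\va$: there is $\rho>0$, independent of $\va$, such that $\|\tilde L_\va u\|_{H^{-1}(\r^3)}\ge\rho\|u\|_{H^1(\r^3)}$ for all $u\in\tilde E$. I would prove this by contradiction, in the spirit of Lemma~\ref{L-1}: if it failed there would be $\va_n\to0$ and $u_n\in\tilde E$ with $\|u_n\|_{H^1}=1$ and $\tilde L_{\va_n}u_n\to0$ in $H^{-1}$. Using $Q_{p_{\va_n}}\to Q_{\bar p}$ in $H^1$ (Lemma~\ref{4-1}), the uniform boundedness of the linear functional $\int Q_{p_\va}^{p_\va-1}u_n\dx$, and local compactness, a weak limit $u$ would satisfy $\tilde L_{\bar p}u=0$, hence $u\in\mathrm{span}\{\psi_{\bar p,j}\}$ by Lemma~\ref{limiteq}; the orthogonality conditions in \eqref{6-e} pass to the limit and force $u=0$. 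One then excludes loss of mass at infinity using the coercivity of the $-\Delta+1$ part together with the exponential decay of the coefficient $Q_{p_\va}^{p_\va-2}$, obtaining $\|u_n\|_{H^1}\to0$, a contradiction. This yields the a priori bound $\|\xi^*_\va\|_{H^1}\le\rho^{-1}\|h_\va\|_{H^{-1}}$.

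Finally I would estimate $\|h_\va\|_{H^{-1}}$ and conclude. From $\|\xi_\va\|_{L^\infty(\r^3)}=1$ and the exponential decay in Lemma~\ref{lemma5-1} one gets $\|\bar\xi_\va\|_{L^2(\r^3)}\le C$, while $\|Q_{p_\va}\|_{L^2(\r^3)}\le C$ by Lemma~\ref{4-1}; therefore
\[
\|h_\va\|_{H^{-1}(\r^3)}\le\|h_\va\|_{L^2(\r^3)}\le C\,\tfrac{\ln\la_\va^{(1)}}{\sqrt{\la_\va^{(1)}}}.
\]
Combined with the coercivity estimate this gives at once $\|\xi^*_\va\|_{H^1(\r^3)}\le C\frac{\ln\la_\va^{(1)}}{\sqrt{\la_\va^{(1)}}}$, which is \eqref{xi}. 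Should the direct $L^2$-bound on $\bar\xi_\va$ be inconvenient, one may instead write $\|\bar\xi_\va\|_{L^2}\le C+\|\xi^*_\va\|_{H^1}$ using $\ga_{\va,j}=O(1)$ from \eqref{ga-pre}, obtaining $\|\xi^*_\va\|_{H^1}\le C\frac{\ln\la_\va^{(1)}}{\sqrt{\la_\va^{(1)}}}(1+\|\xi^*_\va\|_{H^1})$ and absorbing the last term, which is licit because $\la_\va^{(1)}\to+\infty$ as $\va\to0$ makes the prefactor smaller than $\frac12$ for $\va$ small.
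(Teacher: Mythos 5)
Your proposal is correct and follows essentially the same route as the paper: use Lemma~\ref{limiteq} to write $\tilde L_\va\xi^*_\va=\tilde L_\va\bar\xi_\va$, combine \eqref{1-23-5} with the expansions \eqref{5-4-5}--\eqref{5-4-6} to bound $\tilde L_\va\bar\xi_\va$ in $L^2$ by $O\bigl(\ln\la_\va^{(1)}/\sqrt{\la_\va^{(1)}}\bigr)$, and invert $\tilde L_\va$ on $\tilde E$ with a constant uniform in $\va$. The only difference is that you spell out the compactness/contradiction proof of the coercivity estimate and the treatment of the exterior region $|x|\ge\sqrt{\la_\va^{(1)}}d$, which the paper compresses into ``it is standard to prove''; both of these details are correct as you state them.
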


\begin{proof}
  From \eqref{5-4-5}, \eqref{5-4-6} and \eqref{5-4-7}, we have
  \[
  \begin{aligned}
  \tilde{L}_\va(\xi^*_{\va}) ={}& \tilde{L}_\va(\bar{\xi}_{\va}) = -\Delta \bar{\xi}_{\va} + \Big[1-(p_\va-1) Q_{p_\va}^{p_\va-2}\Big] \bar{\xi}_{\va}   +\frac{p_\va-2}{a_{*,\va}}Q_{p_\va}(x)  \int_{\mathbb{R}^{3}} Q_{p_{\va}}^{p_{\va}-1} \bar{\xi}_{\va} \dx\\
  ={}& O\biggl(\frac{\ln \la_{\va}^{(1)}}{\sqrt{\la_{\va}^{(1)}}}\biggr)\bar \xi_{\va} 
   +O\biggl(\frac{\ln \la_\va^{(1)}}{\sqrt{\la_\va^{(1)}}}Q_{p_{\va}}(x)\biggr).
  \end{aligned}
  \]
  By Lemma~\ref{limiteq}, in view of $\xi^*_{\va}\in \tilde{E} $,  it is standard to prove
  \begin{equation}\label{1-13-6}
   \|\xi^*_{\va}\|_{H^1(\mathbb R^3)}
   \le C\|\tilde{L}_\va(\xi^*_{\va})\|_{L^2(\mathbb R^3)} 
   = O\biggl(\frac{\ln \la_{\va}^{(1)}}{\sqrt{\la_{\va}^{(1)}}}\biggr).
   \end{equation}
\end{proof}

\begin{lemma} It holds
  \begin{equation}\label{ga0}
    \ga_{\va,0} = o_\va(1) .
  \end{equation}
\end{lemma}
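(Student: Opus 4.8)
The plan is to determine $\ga_{\va,0}$ from the $L^2$-constraint, which is the one piece of information shared by the two solutions and not yet exploited. The reason this is the right tool is that $\psi_{\va,0}=Q_{p_\va}+\frac{p_\va-2}{2}x\cdot\nabla Q_{p_\va}$ is, up to a constant, the rescaling of $\partial_\la U_{x_\la,p_\va}$, i.e. the dilation mode generated by varying the multiplier $\la$; the mass normalization $\int u^2=a$ is precisely the constraint that blocks motion in this direction. Since $\int_{\r^3}(u_\va^{(1)})^2\dx=\int_{\r^3}(u_\va^{(2)})^2\dx=a$, we have the exact identity
\[
\int_{\r^3}\bigl(u_\va^{(1)}+u_\va^{(2)}\bigr)\xi_\va\dx=0,
\]
already noted in the proof of Lemma~\ref{blowup}.

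First I would rescale this identity by $x=\frac{y}{\sqrt{\la_\va^{(1)}}}+x_\va^{(1)}$ and insert the expansions of $u_\va^{(1)}$ and $u_\va^{(2)}$ established in the proof of Lemma~\ref{blowup} (see \eqref{1-11-6}). Outside $B_{\sqrt{\la_\va^{(1)}}d}(0)$ the integrand is exponentially small by Lemma~\ref{3-1} and Lemma~\ref{lemma5-1}; inside, the leading part of $u_\va^{(1)}+u_\va^{(2)}$ is $2(\la_\va^{(1)}/V_0)^{1/(p_\va-2)}Q_{p_\va}(y)$, and the remaining contributions — the $O(\ln\la_\va^{(1)}/(\la_\va^{(1)})^{7/2-4/(p_\va-2)})$ correction, the gradient term, and the $\omega_\va^{(l)}$-terms controlled by $\|\omega_\va^{(l)}\|_{\la_\va^{(1)}}\le C(\la_\va^{(1)})^{\frac{3}{p_\va-2}-\frac94}$ from Proposition~\ref{2-4} — are all of lower order. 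Using $\|\bar\xi_\va\|_{H^1(\r^3)}=O(1)$ (which follows from \eqref{5-4-7}, \eqref{ga-pre} and Proposition~\ref{prop-xi}) and dividing out the common factor $(\la_\va^{(1)})^{\frac{1}{p_\va-2}-\frac32}$, I expect the identity to collapse to
\[
\int_{\r^3}Q_{p_\va}\bar\xi_\va\dy=O\Bigl(\frac{\ln\la_\va^{(1)}}{\sqrt{\la_\va^{(1)}}}\Bigr).
\]

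Next I would substitute the decomposition $\bar\xi_\va=\sum_{j=0}^3\ga_{\va,j}\psi_{\va,j}+\xi_\va^*$. The three translation modes disappear because $\int_{\r^3}Q_{p_\va}\frac{\partial Q_{p_\va}}{\partial x_j}\dy=\frac12\int_{\r^3}\frac{\partial}{\partial x_j}(Q_{p_\va}^2)\dy=0$, and $\int_{\r^3}Q_{p_\va}\xi_\va^*\dy=O(\|\xi_\va^*\|_{H^1(\r^3)})=O(\ln\la_\va^{(1)}/\sqrt{\la_\va^{(1)}})$ by Proposition~\ref{prop-xi}. The decisive computation is the projection onto the dilation mode: integrating by parts,
\[
\int_{\r^3}Q_{p_\va}\psi_{\va,0}\dy
=\int_{\r^3}Q_{p_\va}^2\dy+\frac{p_\va-2}{2}\int_{\r^3}Q_{p_\va}\,y\cdot\nabla Q_{p_\va}\dy
=\Bigl(1-\frac{3(p_\va-2)}{4}\Bigr)a_{*,\va}=\mp\frac34\,\va\,a_{*,\va},
\]
with the upper sign in case (i) ($p_\va=\bar p+\va$) and the lower in case (ii). Hence this projection vanishes exactly at the critical exponent $\bar p=\frac{10}{3}$ and equals $\mp\frac34\va\,a_{*,\va}$ otherwise, so the three displays combine into
\[
\mp\frac34\,\va\,a_{*,\va}\,\ga_{\va,0}=O\Bigl(\frac{\ln\la_\va^{(1)}}{\sqrt{\la_\va^{(1)}}}\Bigr),
\qquad\text{i.e.}\qquad
\ga_{\va,0}=O\Bigl(\frac{\ln\la_\va^{(1)}}{\va\sqrt{\la_\va^{(1)}}}\Bigr).
\]

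Finally I would close the estimate using the growth of the multiplier. By Theorem~\ref{ExistenceOfNormalized} (equivalently Lemma~\ref{prop-lambda}) there is $c>0$ with $\sqrt{\la_\va^{(1)}}\ge e^{c/\va}$ and $\ln\la_\va^{(1)}\le C/\va$, while $a_{*,\va}\to a_*>0$ by Lemma~\ref{4-1}; therefore $\frac{\ln\la_\va^{(1)}}{\va\sqrt{\la_\va^{(1)}}}\le \frac{C}{\va^2}\,e^{-c/\va}\to0$ as $\va\to0$, which gives $\ga_{\va,0}=o_\va(1)$. The crux of the argument — and the step I expect to require the most care — is precisely the degeneracy of the mass constraint at $\bar p$: the usable projection $\int_{\r^3}Q_{p_\va}\psi_{\va,0}\dy$ is only of size $\va$, so the error in the rescaled mass identity must be controlled at the level $o(\va)$, and it is exactly the exponential smallness $\ln\la_\va^{(1)}/\sqrt{\la_\va^{(1)}}$ coming from the large-$\la$ regime that defeats the resulting $1/\va$ loss.
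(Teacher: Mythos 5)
Your proposal is correct and follows essentially the same route as the paper: both exploit the exact mass identity $\int_{\r^3}(u_\va^{(1)}+u_\va^{(2)})\xi_\va\dx=0$, rescale, project onto the dilation mode using $\int_{\r^3}Q_{p_\va}\psi_{\va,0}\dx=\bigl(1-\tfrac{3(p_\va-2)}{4}\bigr)a_{*,\va}=O(\va)$, and beat the resulting $1/\va$ loss with the bound $\ln\la_\va^{(1)}/\sqrt{\la_\va^{(1)}}$ coming from $\la_\va^{(1)}\ge e^{c/\va}$. The only cosmetic difference is that the paper phrases the constraint as the exponential smallness of the integral over $B_d(x_\va^{(1)})$ rather than working with the full-space identity directly.
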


\begin{proof}
 By \eqref{5-4-7}, Lemma \ref{limiteq} and Proposition \ref{prop-xi}, we have
  \begin{eqnarray}\label{ga0-3-1}
  &&  \int_{B_{d}(x_{\va}^{(1)})}\big(u_{\va}^{(1)}+u_{\va}^{(2)}\big)\xi_{\va} \dx \nonumber\\
&  =& 2V_0^{-\frac{1}{p_\va-2}} \big(\la_{\va}^{(1)}\big)^{ \frac{1}{p_{\va}-2} -\frac{3}{2}} \int_{B_{d\sqrt{\la_{\va}^{(1) } }}(0)} Q_{p_{\va}} \bar{\xi}_{\va} \dx  + O\Bigl( \big(\la_{\va}^{(1)}\big)^{ \frac{1}{p_{\va}-2} -2} \ln \la_{\va}^{(1)} \Bigr) \nonumber\\
  & =& 2V_0^{-\frac{1}{p_\va-2}} \big(\la_{\va}^{(1)}\big)^{ \frac{1}{p_{\va}-2} -\frac{3}{2}} \int_{B_{d\sqrt{\la_{\va}^{(1) } }}(0)} Q_{p_{\va}} \bigg(\sum_{j=0}^{3}\ga_{\va,j}\psi_{\va, j} + \xi^*_{\va}\bigg) \dx 
 + O\Bigl( \big(\la_{\va}^{(1)}\big)^{ \frac{1}{p_{\va}-2} -2}\ln \la_{\va}^{(1)} \Bigr)\nonumber \\
&  =& 2V_0^{-\frac{1}{p_\va-2}} \big(\la_{\va}^{(1)}\big)^{ \frac{1}{p_{\va}-2} -\frac{3}{2}} \ga_{\va, 0} \int_{B_{d\sqrt{\la_{\va}^{(1) } }}(0)} Q_{p_{\va}}\psi_{\va, 0} \dx  +  O\Bigl( \big(\la_{\va}^{(1)}\big)^{ \frac{1}{p_{\va}-2} -2}\ln \la_{\va}^{(1)} \Bigr) \nonumber \\
  &=& 2V_0^{-\frac{1}{p_\va-2}} \big(\la_{\va}^{(1)}\big)^{ \frac{1}{p_{\va}-2} -\frac{3}{2}}  \ga_{\va, 0} \int_{\r^{3}} Q_{p_{\va}} \big(Q_{p_{\va}} + \frac{p_{\va}-2}{2} x \cdot \nabla Q_{p_{\va}}\big) \dx  + O\Bigl( \big(\la_{\va}^{(1)}\big)^{ \frac{1}{p_{\va}-2} -2}\ln \la_{\va}^{(1)} \Bigr) \nonumber \\
 & =& 2V_0^{-\frac{1}{p_\va-2}}\big(\la_{\va}^{(1)}\big)^{ \frac{1}{p_{\va}-2} -\frac{3}{2}} \bigg(1 -\frac{3(p_{\va}-2)}{4}\bigg) \ga_{\va,0}  \int_{\r^{3}} Q^{2}_{p_{\va}} \dx 
 +O\Bigl( \big(\la_{\va}^{(1)}\big)^{ \frac{1}{p_{\va}-2} -2}\ln \la_{\va}^{(1)} \Bigr) \nonumber \\
&  =& \frac 32\Big(\frac{10}{3} -p_{\va} \Big) V_0^{-\frac{1}{p_\va-2}} \ga_{\va, 0} \big(\la_{\va}^{(1)}\big)^{ \frac{1}{p_{\va}-2} -\frac{3}{2}} a_{*, \va}   
+O\Bigl( \big(\la_{\va}^{(1)}\big)^{ \frac{1}{p_{\va}-2} -2}\ln \la_{\va}^{(1)} \Bigr). \nonumber 
  \end{eqnarray}
Thus, we obtain
  \begin{eqnarray}\label{ga0-2} \int_{B_{d}(x_{\va}^{(1)})}\big(u_{\va}^{(1)}+u_{\va}^{(2)}\big)\xi_{\va} \dx  =
      \frac 32\Big(\frac{10}{3} -p_{\va} \Big) V_0^{-\frac{1}{p_\va-2}} \big(\la_{\va}^{(1)}\big)^{ \frac{1}{p_{\va}-2} -\frac{3}{2}} a_{*, \va} \ga_{\va,0}  
      +O\Bigl( \big(\la_{\va}^{(1)}\big)^{ \frac{1}{p_{\va}-2} -2}\ln \la_{\va}^{(1)} \Bigr).
  \end{eqnarray}
Moreover, using
\[
\int_{\mathbb R^3} \big(u_{\va}^{(1)}+u_{\va}^{(2)}\big)\xi_{\va} \dx  =\frac1{\|u_{\va}^{(1)}-u_{\va}^{(2)}\|_{L^\infty(\mathbb R^3)}}
\int_{\mathbb R^3}\Big[ \big(u_{\va}^{(1)}\big)^2-\big(u_{\va}^{(2)}\big)^2\Big] \dx 
=0,
\]
and Lemma~\ref{3-1}, we find
\begin{eqnarray}\label{ga0-1} \int_{B_{d}(x_{\va}^{(1)})}\big(u_{\va}^{(1)}+u_{\va}^{(2)}\big)\xi_{\va} \dx =
     \int_{\mathbb R^3 \setminus B_{d}(x_{\va}^{(1)})}\big(u_{\va}^{(1)}+u_{\va}^{(2)}\big)\xi_{\va} \dx =
     O\Bigl(e^{-\theta \sqrt{ \la_{\va}^{(1)} }}  \Bigr).
\end{eqnarray}
Hence, from  \eqref{ga0-2} and \eqref{ga0-1}, in view of
\[
\frac{10}{3} -p_{\va}= \pm \va = O\Bigl( \big(\ln \la_{\va}^{(1)}\big)^{-1} \Bigr),
\]
we get
$\ga_{\va, 0}=o_\va(1).$
  \end{proof}

\begin{lemma} It holds
  \begin{equation}\label{ga12}
    \ga_{\va,j} = o_\va(1), \, j=1,2, 3.
  \end{equation}
\end{lemma}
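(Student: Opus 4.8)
The plan is to repeat the local Pohozaev argument of Lemma~\ref{3-51}, now in the spatial directions $j=1,2,3$, and to read off the vanishing of the limits of $\ga_{\va,j}$ from the non-degeneracy of the Hessian $D^2V(b_0)$. First I would apply the symmetrized identity \eqref{Pohozaev0} to each of $u_\va^{(1)}$ and $u_\va^{(2)}$ (with their own multipliers $\la_\va^{(1)},\la_\va^{(2)}$), subtract, and divide by $\|u_\va^{(1)}-u_\va^{(2)}\|_{L^\infty(\r^3)}$, exactly as in \eqref{p2}. The key point is that $\la$ enters \eqref{Pohozaev0} only through the boundary term $-\frac{\la}{2}\int_{\partial B_d}u^2\nu_j\,\dsi$; hence the subtraction produces, beyond the boundary integrals already present in \eqref{p2}, only the extra contribution $-\tfrac12\big(\la_\va^{(1)}-\la_\va^{(2)}\big)\int_{\partial B_d(x_\va^{(1)})}\big(u_\va^{(2)}\big)^2\nu_j\,\dsi$ divided by the $L^\infty$-norm. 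By Lemma~\ref{3-1} the boundary values of $u_\va^{(2)}$ are $O(e^{-(1-\theta)\sqrt{\la_\va^{(1)}}})$, while $\big(\la_\va^{(1)}-\la_\va^{(2)}\big)/\|u_\va^{(1)}-u_\va^{(2)}\|_{L^\infty}$ is only polynomially large in $\la_\va^{(1)}$ (as computed in Lemma~\ref{blowup}), so this new term is exponentially small; together with Lemma~\ref{lemma5-1} this makes the whole right-hand side of the subtracted identity $O(e^{-\theta_2\sqrt{\la_\va^{(1)}}})$, just as in \eqref{rhs}.

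Next I would expand the left-hand side $\frac{1}{p_\va}\int_{B_d(x_\va^{(1)})}\frac{\partial V}{\partial x_j}\,F\big(p_\va,u_\va^{(1)},u_\va^{(2)}\big)\,\xi_\va\,\dx$ by Taylor's formula at $x_\va^{(1)}$, rescaling through \eqref{0-23-5} and using \eqref{5-4-3}--\eqref{5-4-4} together with \eqref{sum-u}. The gradient term carries the factor $\partial_{x_j}V(x_\va^{(1)})=O\big((\la_\va^{(1)})^{-1}\big)$ coming from \eqref{lambda4} (equivalently \eqref{3-3-6}), hence is of lower order, and the dominant contribution is the Hessian term, yielding the exact analogue of \eqref{phozaev},
\[
\big(\la_\va^{(1)}\big)^{-2}\sum_{h=1}^3\frac{\partial^2V(x_\va^{(1)})}{\partial x_j\partial x_h}\int_{B_d(x_\va^{(1)})}x_h\,F\big(p_\va,u_\va^{(1)},u_\va^{(2)}\big)\,\bar\xi_\va\,\dx=o\Big(\big(\la_\va^{(1)}\big)^{\frac{1}{p_\va-2}-1}\Big).
\]

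Finally I would pass to the limit $\va\to0$ (so $\la_\va^{(1)}\to+\infty$). Since $\ga_{\va,j}=O(1)$ by \eqref{ga-pre}, I may select a subsequence with $\ga_{\va,j}\to c_j$; combining $\ga_{\va,0}=o_\va(1)$ from \eqref{ga0}, $\|\xi^*_\va\|_{H^1(\r^3)}=o_\va(1)$ from Proposition~\ref{prop-xi}, the decomposition \eqref{5-4-7}, and $\psi_{\va,j}=\partial_{x_j}Q_{p_\va}\to\partial_{x_j}Q_{\bar p}$, I obtain $\bar\xi_\va\to\sum_{l=1}^3 c_l\,\partial_{x_l}Q_{\bar p}$ in $C^2_{loc}(\r^3)$, as in Lemma~\ref{3-5}. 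Inserting this into the rescaled identity (with $F$ converging, up to a fixed positive constant, to $\bar p\,Q_{\bar p}^{\bar p-1}$) and carrying out the symmetry reduction of Lemma~\ref{3-51} gives, for $j=1,2,3$,
\[
\sum_{h=1}^3 c_h\,\frac{\partial^2V(b_0)}{\partial x_j\partial x_h}\int_{\r^3} x_h\,Q_{\bar p}^{\bar p-1}\frac{\partial Q_{\bar p}}{\partial x_h}\,\dx=0.
\]
Because $\int_{\r^3}x_h\,Q_{\bar p}^{\bar p-1}\partial_{x_h}Q_{\bar p}\,\dx=-\frac{1}{\bar p}\int_{\r^3}Q_{\bar p}^{\bar p}\,\dx\neq0$ while the off-diagonal integrals vanish by symmetry, this reads $D^2V(b_0)\,\mathbf{c}=0$, and the non-degeneracy of $b_0$ forces $\mathbf{c}=0$. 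Since the subsequence was arbitrary, $\ga_{\va,j}=o_\va(1)$ for $j=1,2,3$.

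The main obstacle is purely the bookkeeping: one must check that every term other than the Hessian term---the boundary integrals, the exterior nonlocal integral, the gradient term and, above all, the new $\la_\va^{(1)}-\la_\va^{(2)}$ boundary contribution---is genuinely of lower order after division by $\|u_\va^{(1)}-u_\va^{(2)}\|_{L^\infty(\r^3)}$ and after rescaling, uniformly in $\va$. The comparison \eqref{5-4-3} between $\la_\va^{(1)}$ and $\la_\va^{(2)}$ and the sharpened location estimate \eqref{lambda4} are precisely what keep these error terms under control.
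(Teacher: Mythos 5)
Your proposal is correct and follows essentially the same route as the paper: subtract the local Pohozaev identities \eqref{Pohozaev0} for the two solutions, show the right-hand side (including the new $\la_\va^{(1)}-\la_\va^{(2)}$ boundary term) is exponentially small, Taylor-expand $\partial_{x_j}V$ using \eqref{lambda4} so that the Hessian term dominates, and conclude from the non-degeneracy of $D^2V(b_0)$ together with $\int_{\r^3}x_h Q^{\bar p-1}\partial_{x_h}Q\,\dx<0$. The only (immaterial) difference is that you pass to a subsequential limit of $\ga_{\va,j}$, whereas the paper substitutes the decomposition \eqref{5-4-7} directly and obtains the quantitative rate $\ga_{\va,j}=O\bigl((\la_\va^{(1)})^{-1/2}\ln\la_\va^{(1)}\bigr)$.
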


\begin{proof}
  Using \eqref{Pohozaev0}, we have the following identity
  \begin{equation}\label{p61-23-1}\begin{split}
  		&\frac{1}{p_{\va} } \int_{ B_{d}(x^{(1)}_{\va})} \frac{\partial V(x)}{\partial x_j} F\big(p_{\va}, u_{\va}^{(1)}, u_{\va}^{(2)}\big)  \xi_{\va}   \dx \\
  		=  {}& \int_{\partial B_{d}(x^{(1)}_{\va})} \frac{\partial  u_{\va}^{(2)}}{\partial \nu} \frac{\partial \xi_{\va}}{\partial x_{j}} \dsi
  		+ \int_{\partial B_{d}(x^{(1)}_{\va})} \frac{\partial  \xi_{\va}}{\partial \nu} \frac{\partial u_{\va}^{(1)}}{\partial x_{j}} \dsi
  		- \frac{1}{2}  \int_{\partial B_{d}(x^{(1)}_{\va})} \nabla \big(u_{\va}^{(1)} + u_{\va}^{(2)}\big) \nabla \xi_{\va} \nu_{j}  \dsi \\
  		{}&  -\frac{\la_\va^{(1)}-\la_\va^{(2)}}{2\|u_{\va}^{(1)}-u_{\va}^{(2)}\|_{L^3(\r^3)}} \int_{\partial B_{d}(x^{(1)}_{\va})} \big(u_{\va}^{(1)}\big)^2 \nu_j \dsi
  		- \frac{\la_\va^{(2)}}{2} \int_{\partial B_{d}(x^{(1)}_{\va})}  \big(u_{\va}^{(1)} + u_{\va}^{(2)}\big) \xi_{\va}  \nu_{j} \dsi  \\
  		{}&+ \frac{1}{p_{\va} } \int_{\partial B_{d}(x^{(1)}_{\va})} V(x) F\big(p_{\va}, u_{\va}^{(1)}, u_{\va}^{(2)}\big)  \xi_{\va}  \nu_{j} \dsi \\
  		{}&-\frac12 \int_{\partial B_{d}(x^{(1)}_{\va})} \int_{\r^3} \frac{\big(u_{\va}^{(1)}(y)+u_{\va}^{(2)}(y) \big) \xi_{\la}(y)}{|x-y|} \dy \cdot  \big(u_{\va}^{(1)}(x) \big)^2 \nu_j \dsi \\
  		{}&-\frac12 \int_{\partial B_{d}(x^{(1)}_{\va})} \int_{\r^3} \frac{\big(u_{\va}^{(2)}(y)\big)^2 }{|x-y|} \dy \cdot  \big(u_{\va}^{(1)}(x)+u_{\va}^{(2)}(x) \big) \xi_{\va}(x) \nu_j \dsi, \\
  		{}&+\frac12 \int_{\r^3\setminus B_{d}(x^{(1)}_{\va})} \int_{\r^3} \frac{x_j-y_j}{|x-y|^3}\big(u_{\va}^{(1)}(y)+u_{\va}^{(2)}(y) \big) \xi_{\va}(y) \dy \cdot  \big(u_{\va}^{(1)}(x) \big)^2  \dx \\
  		{}&+\frac12 \int_{\r^3\setminus B_{d}(x^{(1)}_{\va})} \int_{\r^3} \frac{x_j-y_j}{|x-y|^3} \big(u_{\va}^{(2)}(y)\big)^2\dy \cdot  \big(u_{\va}^{(1)}(x)+u_{\va}^{(2)}(x) \big) \xi_{\va}(x)  \dx \\
  	\end{split}
  \end{equation}
  where
  \[ F\big(p_{\va}, u_{\va}^{(1)}, u_{\va}^{(2)}\big)= \frac{\big(u_{\va}^{(1)}\big)^{p_{\va}} - \big(u_{\va}^{(2)}\big)^{p_{\va}} }{u_{\va}^{(1)} - u^{(2)}_{\va}} = p_{\va} \int_{0}^{1}\big(tu_{\va}^{(1)} +(1-t)u_{\va}^{(2)}\big)^{p_{\va}-1}\dt.  \]
  .

  By the exponential decay of $u_{\va}^{(1)}, u_{\va}^{(1)}$ and $\xi_{\va}$,
   we can deduce from \eqref{p61-23-1} that
  \begin{equation}\label{61-23-5}
  \begin{split}
    \frac{1}{p_{\va} } \int_{ B_{d}(x^{(1)}_{\va})} \frac{\partial V(x)}{\partial x_j} F\big(p_{\va}, u_{\va}^{(1)}, u_{\va}^{(2)}\big)  \xi_{\va}   \dx
    = O\Big(e^{-\tau \sqrt{\la_{\va}^{(1)}}}\Big).
  \end{split}
  \end{equation}
  By a change of variable, we have
  \begin{equation}\label{gai-1}
  \begin{split}
   \int_{B_{d \sqrt{\la_{\va}^{(1)}}}(0)} \frac{\partial V\Big(\frac{x}{\sqrt{\la_{\va}^{(1)}}} + x_{\va}^{(1)}\Big)}{\partial x_{j}} F\bigg(p_{\va}, u_{\va}^{(1)}\Big(\frac{x}{\sqrt{\la_{\va}^{(1)}}} + x_{\va}^{(1)}\Big), u_{\va}^{(2)}\Big(\frac{x}{\sqrt{\la_{\va}^{(1)}}} + x_{\va}^{(1)}\Big)\bigg)  \bar{\xi}_{\va} \dx 
    = O\Big(e^{-\tau\sqrt{\la_{\va}^{(1)}}}\Big).
  \end{split}
  \end{equation}
  It follows from the fact $|x_{\va}^{(1)}-b_0| = O\Big(\big(\la_{\va}^{(1)}\big)^{-1}\Big)$ that
  \[
  \frac{\partial V(x_{\va}^{(1)})}{\partial x_{j}} = \frac{\partial V(b_0)}{\partial x_{j}} + O\big(|x_{\va}^{(1)}-b_0|\big) = O\Big(\big(\la_{\va}^{(1)}\big)^{-1}\Big),
  \]
  which implies
  \begin{equation}\label{gai-2}
  \int_{B_{\sqrt{\la_{\va}^{(1)}}d}(0)} \frac{\partial V(x_{\va}^{(1)})}{\partial x_{j}} F\bigg(p_{\va}, u_{\va}^{(1)}\Big(\frac{x}{\sqrt{\la_{\va}^{(1)}}} + x_{\va}^{(1)}\Big), u_{\va}^{(2)}\Big(\frac{x}{\sqrt{\la_{\va}^{(1)}}} + x_{\va}^{(1)}\Big)\bigg)  \bar{\xi}_{\va} \dx 
    = O\Big(\big(\la_{\va}^{(1)}\big)^{\frac{1}{p_{\va}-2}}\Big).
  \end{equation}
  Hence, it follows from \eqref{gai-1} and \eqref{gai-2} that
  \[
  \sum_{h=1}^{3} \int_{B_{\sqrt{\la_{\va}^{(1)}}d}(0)} \frac{\partial^2 V(x_{\va}^{(1)})}{\partial x_{j} \partial x_{h}} x_{h} F\bigg(p_{\va}, u_{\va}^{(1)}\Big(\frac{x}{\sqrt{\la_{\va}^{(1)}}} + x_{\va}^{(1)}\Big), u_{\va}^{(2)}\Big(\frac{x}{\sqrt{\la_{\va}^{(1)}}} + x_{\va}^{(1)}\Big)\bigg)  \bar{\xi}_{\va} \dx 
    = O\Big(\big(\la_{\va}^{(1)}\big)^{\frac{1}{p_{\va}-2}}\Big).
  \]
  By \eqref{sum-u}, we have
  \[
  \begin{aligned}
  &F\bigg(p_{\va}, u_{\va}^{(1)}\Big(\frac{x}{\sqrt{\la_{\va}^{(1)}}} + x_{\va}^{(1)}\Big), u_{\va}^{(2)}\Big(\frac{x}{\sqrt{\la_{\va}^{(1)}}} + x_{\va}^{(1)}\Big)\bigg) \\
  ={}&p_\va \Big(\frac{\la_{\va}^{(1)}}{V_0}\Big)^{\frac{1}{p_{\va}-2}+1} Q_{p_\va}^{p_\va-1}(x) \bigg[1 + O\bigg(\frac{\ln \la_\va^{(1)}}{(\la_\va^{(1)})^{\frac{7}{2}-\frac{4}{p_\va-2}}}\bigg)  \bigg] \\
  ={}&p_\va \Big(\frac{\la_{\va}^{(1)}}{V_0}\Big)^{\frac{1}{p_{\va}-2}+1} Q_{p_\va}^{p_\va-1}(x) + O\Big( \big(\la_{\va}^{(1)}\big)^{\frac{1}{p_{\va}-2}+\frac12} \big(\ln \la_\va^{(1)}\big) Q_{p_\va}^{p_\va-1}(x) \Big) .
  \end{aligned}
  \]
  Then
  \[\begin{split}
  & \sum_{h=1}^{3} \int_{B_{\sqrt{\la_{\va}^{(1)}}d}(0)} \frac{\partial^2 V(x_{\va}^{(1)})}{\partial x_{j} \partial x_{h}} x_{h} Q_{p_\va}^{p_\va-1}(x) \sum_{k=0}^{3}\ga_{\va,k} \psi_{\va, k}(x) \dx \\
  ={}&  \sum_{h=1}^{3} \ga_{\va,h} \frac{\partial^2 V(x_{\va}^{(1)})}{\partial x_{j} \partial x_{h}} \int_{\mathbb{R}^{3}} x_{h} Q_{p_\va}^{p_\va-1} \frac{\partial Q_{p_\va}}{\partial x_{h}} \dx\\
    ={}&  O\Bigl(\big(\la_{\va}^{(1)}\big)^{-\frac{1}{2}} \ln \la_\va^{(1)} \Bigr). 
 \end{split} \]
  Since
  \[
   \int_{\mathbb{R}^{3}} x_{h} Q_{p_\va}^{p_\va-1} \frac{\partial Q_{p_\va}}{\partial x_{h}} \dx
  = -\frac{1}{p_\va} \int_{\mathbb{R}^{3}} Q_{p_\va}^{p_\va} \dx  < 0,
  \]
and  $b_0$ is a non-degenerate critical point of $V(x)$, we conclude that
   \[\ga_{\va,j} =O\Bigl(\big(\la_{\va}^{(1)}\big)^{-\frac{1}{2}}\ln \la_\va^{(1)}  \Bigr), \, j=1,2, 3.\]
\end{proof}

\begin{proof}[\bf Proof of Theorem \ref{UniquenessOfNormalized}.]
	 On one hand, Lemma \ref{lemma5-1} shows that
	\[
	\xi_{\va}(x) = o_\va(1), \quad x\in \mathbb{R}^{3}\setminus  B_{R/\sqrt{\la_{\va}^{(1)}}}(x_{\va}^{(1)}).
	\]
	On the other hand, it follows from \eqref{xi}, \eqref{ga0} and \eqref{ga12} that
	\[
	\xi_{\va}(x) = o_\va(1), \quad x\in  B_{R/\sqrt{\la_{\va}^{(1)}}}(x_{\va}^{(1)}).
	\]
	Then, $\xi_{\va}(x) = o_\va(1), \, x\in\mathbb{R}^{3}$. It contradicts to $\|\xi_{\va}\|_{L^{\infty}(\mathbb{R}^3)}=1$. Thus, we have $u_{\va}^{(1)}(x)\equiv u_{\va}^{(2)}(x)$ for small $\va > 0$.
\end{proof}

\appendix

\section{A useful estimate and Hardy-Littlewood-Sobolev inequality}

\renewcommand{\theequation}{A.\arabic{equation}}

\begin{lemma}\label{4-1}(\cite[ Lemma A.1]{Guo}) For any fixed small constant $\theta>0$, there exists
	$C_1>0$, such that
	\[
	Q_{p_\va}(x) \leq C_1 e^{-(1-\theta)|x|}, \quad \forall x\in\r^3.
	\]
	Moreover,
	\[
	\| Q_{p_{\va}}- Q_{\bar{p}}\|_{H^1(\mathbb R^3)} \leq C \va,
	\]
	for some $C>0$, independent of $\va$.
\end{lemma}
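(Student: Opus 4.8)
The plan is to prove the two assertions independently, using that each $Q_{p_\va}$ is the unique positive radial solution of \eqref{1-31-5} and that $Q_{\bar p}$ is non-degenerate, with the recurring theme being that every constant must be made independent of $\va$. The common preliminary is a uniform bound $\|Q_{p_\va}\|_{H^1(\r^3)}+\|Q_{p_\va}\|_{L^\infty(\r^3)}\le M$: the $H^1$-bound follows from the Nehari and Pohozaev identities for \eqref{1-31-5}, which pin $\int Q_{p_\va}^2$ and $\int|\nabla Q_{p_\va}|^2$ to fixed multiples of $\int Q_{p_\va}^{p_\va}$, the latter controlled through the Gagliardo--Nirenberg best constant, continuous in $p_\va$; the $L^\infty$-bound then comes from elliptic regularity since $p_\va<6$ stays uniformly subcritical.

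For the exponential decay I would use a comparison argument. By Strauss' radial lemma $Q_{p_\va}(x)\le C|x|^{-1}M$, so there is an $R>0$, \emph{independent of} $\va$, with $Q_{p_\va}^{p_\va-2}(x)\le 2\theta-\theta^2$ for $|x|\ge R$. Rewriting \eqref{1-31-5} as $-\Delta Q_{p_\va}+(1-Q_{p_\va}^{p_\va-2})Q_{p_\va}=0$ and using $1-(2\theta-\theta^2)=(1-\theta)^2$, the function $Q_{p_\va}$ is a subsolution of $-\Delta w+(1-\theta)^2w=0$ on $\{|x|\ge R\}$, while $\bar w=C_1e^{-(1-\theta)|x|}$ is a supersolution, since a direct computation gives
\[
-\Delta\bar w+(1-\theta)^2\bar w=\frac{2(1-\theta)}{|x|}\bar w\ge 0.
\]
Choosing $C_1$ so that $\bar w\ge Q_{p_\va}$ on $\{|x|=R\}$ and applying the maximum principle (the zeroth order coefficient $(1-\theta)^2$ is positive) on the exterior domain yields $Q_{p_\va}\le\bar w$ there, and the bounded inner region $\{|x|\le R\}$ is absorbed into $C_1$.

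For the rate estimate, set $w_\va=Q_{p_\va}-Q_{\bar p}$, so that $-\Delta w_\va+w_\va=Q_{p_\va}^{p_\va-1}-Q_{\bar p}^{\bar p-1}$. I would recast this as $L_0w_\va=G_\va$, where $L_0=-\Delta+1-(\bar p-1)Q_{\bar p}^{\bar p-2}$ is the linearization at $Q_{\bar p}$ and, adding and subtracting $Q_{\bar p}^{p_\va-1}$,
\[
G_\va=\Big(Q_{p_\va}^{p_\va-1}-Q_{\bar p}^{p_\va-1}-(\bar p-1)Q_{\bar p}^{\bar p-2}w_\va\Big)+\Big(Q_{\bar p}^{p_\va-1}-Q_{\bar p}^{\bar p-1}\Big).
\]
The last bracket equals $Q_{\bar p}^{\bar p-1}\big(Q_{\bar p}^{\pm\va}-1\big)$, and since the two-sided exponential bounds on $Q_{\bar p}$ give $Q_{\bar p}^{\bar p-1}|\ln Q_{\bar p}|\in L^2(\r^3)$, a Taylor expansion of $e^{\pm\va\ln Q_{\bar p}}$ produces $\|Q_{\bar p}^{p_\va-1}-Q_{\bar p}^{\bar p-1}\|_{L^2}\le C\va$; this is the genuine source of order $\va$. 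The first bracket, by the mean value theorem together with the uniform pointwise bounds from the decay estimate and the qualitative convergence $w_\va\to0$ in $H^1$, is $o(1)\|w_\va\|_{H^1}$. Because $Q_{\bar p}$ is non-degenerate, $\ker L_0$ is spanned by the non-radial $\partial_{x_j}Q_{\bar p}$, so $L_0$ restricted to $H^1_{\mathrm{rad}}(\r^3)$ is invertible with bounded inverse; applying $L_0^{-1}$ gives $\|w_\va\|_{H^1}\le C\big(\va+o(1)\|w_\va\|_{H^1}\big)$, hence $\|w_\va\|_{H^1}\le C\va$.

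The main obstacle is concentrated in this last step and is twofold. First, there is a circularity: the quantitative rate requires the \emph{qualitative} fact $w_\va\to0$ in $H^1$ (to ensure the first bracket is truly $o(1)\|w_\va\|_{H^1}$), which I would establish separately by extracting a weak limit from the uniform $H^1$-bound, identifying it as a radial positive solution of $-\Delta Q+Q=Q^{\bar p-1}$ — hence $Q_{\bar p}$ by uniqueness — and upgrading to strong convergence using the uniform decay to rule out escape of mass at infinity. Second, the entire scheme hinges on every constant (in the decay, in $L_0^{-1}$, in the Taylor remainder) being independent of $\va$, which is precisely why the uniform bounds of the first part and the integrability of $Q_{\bar p}^{\bar p-1}|\ln Q_{\bar p}|$ are indispensable.
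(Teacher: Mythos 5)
The paper does not prove this lemma at all: it is quoted verbatim from \cite[Lemma A.1]{Guo}, so there is no in-paper argument to compare against. Your proof is, as far as I can check, correct and self-contained, and it follows the route one would expect for such a statement: (a) uniform-in-$\va$ $H^1$ and $L^\infty$ bounds from the Nehari and Pohozaev identities together with the continuity of the Gagliardo--Nirenberg constant in $p$, (b) the exponential bound via Strauss' radial estimate and a comparison with the explicit supersolution $C_1e^{-(1-\theta)|x|}$ of $-\Delta w+(1-\theta)^2w=0$ on an exterior domain with $R$ chosen independently of $\va$, and (c) the rate $\|Q_{p_\va}-Q_{\bar p}\|_{H^1}\le C\va$ by writing $L_0w_\va=G_\va$, isolating the genuine $O(\va)$ source $Q_{\bar p}^{\bar p-1}(Q_{\bar p}^{\pm\va}-1)$, and inverting $L_0$ on the radial class where non-degeneracy makes it an isomorphism. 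The two points that carry the real weight are exactly the ones you flag yourself: the qualitative convergence $Q_{p_\va}\to Q_{\bar p}$ in $H^1$ must be established first (compactness of the radial embedding, a uniform lower bound $\|Q_{p_\va}\|_{H^1}\ge c>0$ to exclude a trivial weak limit, and uniqueness of $Q_{\bar p}$) so that the first bracket in $G_\va$ is genuinely $o(1)\|w_\va\|_{H^1}$ and can be absorbed; and every constant (Strauss, $L^\infty$, $\|L_0^{-1}\|$, the Taylor remainder involving $Q_{\bar p}^{\bar p-1}|\ln Q_{\bar p}|\in L^2$) must be $\va$-independent, which your uniform bounds supply. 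I see no genuine gap; the only implicit standard fact you lean on without proof is the continuity in $p$ of the best Gagliardo--Nirenberg constant (equivalently of $\int Q_p^p$), which is classical.
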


\begin{lemma}{(Hardy-Littlewood-Sobolev inequality, see \cite{Lieb})}\label{lem-HLS}
	Let $p,r>1$ and $0<t< N$ with $1/p+t/N+1/r=2$. Let $f\in L^{p}(\r^N)$ and $h\in L^{r}(\r^N)$. Then there exists a sharp constant $C(N,t,p)$, independent of $f$ and $h$, such that
	\begin{equation}\label{HLS}
		\left| \int_{\r^N}\int_{\r^N} f(x) |x-y|^{-t} h(y) \dx\dy  \right| \leq C(N,t,p) \|f\|_{L^{p}(\r^N)} \|h\|_{L^{r}(\r^N)}.
	\end{equation}
\end{lemma}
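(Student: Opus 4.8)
The statement is the classical Hardy--Littlewood--Sobolev inequality, so the plan is to follow the standard route through the Riesz potential and the Hardy--Littlewood maximal function for the existence of a finite constant, and then to indicate how the \emph{sharp} constant is obtained by rearrangement. First I would dualize. Introduce the Riesz potential
\[
I_t f(x) = \int_{\r^N} \frac{f(y)}{|x-y|^t}\,\dy,
\]
and observe that, setting $q=r'=r/(r-1)$, the balance condition $\frac1p+\frac tN+\frac1r=2$ is equivalent to $\frac1q=\frac1p-\frac{N-t}{N}$; since $0<t<N$ and $p>1$, this forces $q>p>1$ with $1<q<\infty$. By the dual characterization of the $L^q$ norm, \eqref{HLS} is then equivalent to the mapping property $\|I_t f\|_{L^q(\r^N)}\le C\|f\|_{L^p(\r^N)}$, which is what I would establish.

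The heart of the argument is a pointwise domination of $I_t f$ by the Hardy--Littlewood maximal function $Mf$ (the Hedberg trick). For each $R>0$ I would split
\[
I_t f(x)=\int_{|x-y|\le R}\frac{f(y)}{|x-y|^t}\,\dy+\int_{|x-y|>R}\frac{f(y)}{|x-y|^t}\,\dy.
\]
On the near region, a dyadic decomposition of the annuli bounds the first integral by $C R^{N-t}Mf(x)$; on the far region, H\"older's inequality with the conjugate exponent $p'$ gives a bound $C R^{\frac{N}{p'}-t}\|f\|_{L^p}$, the tail integral converging precisely because $tp'>N$, which is exactly the requirement $q<\infty$. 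Balancing the two contributions by choosing $R$ with $R^{N/p}=\|f\|_{L^p}/Mf(x)$ yields
\[
|I_t f(x)|\le C\,\big(Mf(x)\big)^{p/q}\,\|f\|_{L^p(\r^N)}^{1-p/q}.
\]
Raising this to the $q$-th power, integrating over $\r^N$, and invoking the maximal theorem $\|Mf\|_{L^p}\le C_p\|f\|_{L^p}$ (valid for $p>1$) then gives $\|I_t f\|_{L^q}^q\le C\|f\|_{L^p}^{q-p}\|Mf\|_{L^p}^p\le C\|f\|_{L^p}^q$, the desired bound with a finite constant.

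Finally, to upgrade ``finite constant'' to the ``sharp constant'' asserted in the statement, I would pass to symmetric decreasing rearrangements: by the Riesz rearrangement inequality the bilinear form on the left of \eqref{HLS} does not decrease when $f$, the kernel $|x|^{-t}$, and $h$ are each replaced by their symmetric decreasing rearrangements, so the extremal problem reduces to radially decreasing $f,h$. Identifying the extremizers and the exact value of $C(N,t,p)$ is the genuinely delicate point, carried out by the conformal-invariance / competing-symmetries analysis of \cite{Lieb}, where the problem is mapped to the sphere $S^N$. I expect this sharpness step to be the main obstacle: Steps 1--3 giving a finite constant are routine, whereas pinning down the optimal constant and the extremal family requires the full rearrangement-plus-conformal argument, which is why the result is simply quoted from \cite{Lieb} and used as a black box throughout the paper.
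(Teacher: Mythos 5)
Your sketch is mathematically sound, but note first that the paper itself contains no proof of this lemma: it is stated in Appendix A as a verbatim quotation from \cite{Lieb} and is used as a black box in the estimates of Section 2 (boundedness of $L_\la$, the bounds on $l_\la$ and $R_\la(\omega)$). So the real comparison is with the proof in the cited reference, which proceeds differently from yours. You dualize to the Riesz potential and prove $\|I_t f\|_{L^q(\r^N)}\le C\|f\|_{L^p(\r^N)}$ with $1/q=1/p-(N-t)/N$ via Hedberg's pointwise estimate $|I_t f(x)|\le C\bigl(Mf(x)\bigr)^{p/q}\|f\|_{L^p(\r^N)}^{1-p/q}$ and the Hardy--Littlewood maximal theorem; your exponent bookkeeping checks out (the near-region dyadic sum needs $t<N$, the far-region H\"older bound needs $tp'>N$, which is exactly $q<\infty$, and the choice $R^{N/p}=\|f\|_{L^p}/Mf(x)$ balances the two contributions). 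This produces a finite admissible constant, and since the infimum of all admissible constants is again admissible, the ``sharp constant'' asserted in the statement exists; contrary to your closing paragraph, no rearrangement argument is needed for the statement as written --- sharpness here means only that an optimal constant exists, not that its value is computed. What the rearrangement and competing-symmetries machinery of \cite{Lieb} buys, and what your maximal-function route cannot give, is the explicit value of the optimal constant (known in the diagonal case $p=r$) and the existence and identification of extremizers; indeed the cited source proves the inequality itself by the Riesz rearrangement inequality, reducing to symmetric decreasing functions, rather than by maximal functions. For the purposes of this paper only finiteness of $C(N,t,p)$ is ever invoked, so either argument would serve equally well.
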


\medskip

\noindent
\textbf{Data Availability Statement.} The authors confirm that there is no data used in our manuscript.

\noindent
\textbf{Acknowledgment.} The authors would like to thank Professor Peng Luo from Central China Normal University for the helpful discussion with him. This paper was supported by NSFC grants (No.12071169).   

\end{document}